\newenvironment{proof}[1][Proof]{\noindent\textbf{#1.} }{\ \rule{0.5em}{0.5em}}
\newtheorem{De}{Definition}[section]
\newtheorem{Th}[De]{Theorem}
\newtheorem{Pro}[De]{Proposition}
\newtheorem{Le}[De]{Lemma}
\newtheorem{Co}[De]{Corollary}
\newtheorem{Rem}[De]{Remark}
\newtheorem{Ex}[De]{Example}
\newcommand{\im}{\ensuremath{\mathsf{Im\,}}}
\renewcommand{\ker}{\ensuremath{\mathsf{Ker\,}}}
\newcommand{\Lie}{\ensuremath{\mathsf{Lie}}}
\newcommand{\Lieh}{\ensuremath{\mathfrak{h}}}
\newcommand{\Lieg}{\ensuremath{\mathfrak{g}}}
\newcommand{\Lieq}{\ensuremath{\mathfrak{q}}}
\newcommand{\Liea}{\ensuremath{\mathfrak{a}}}
\newcommand{\Lieb}{\ensuremath{\mathfrak{b}}}
\newcommand{\Liem}{\ensuremath{\mathfrak{m}}}
\newcommand{\Lien}{\ensuremath{\mathfrak{n}}}
\newcommand{\Lief}{\ensuremath{\mathfrak{f}}}
\newcommand{\Lies}{\ensuremath{\mathfrak{s}}}
\newcommand{\Lier}{\ensuremath{\mathfrak{r}}}
\newcommand{\Liet}{\ensuremath{\mathfrak{t}}}
\newcommand{\Liep}{\ensuremath{\mathfrak{p}}}
\newcommand{\Leib}{\ensuremath{\mathsf{Leib}}}
\newcommand{\ze}{\cal Z}
\newbox\pullbackbox
\newcommand{\q}{\frak q}
\newcommand{\Ker}{\ensuremath{\mathrm{Ker}}}
   \newcommand{\eq}{\frak q} 
  \newcommand{\eh}{\frak h}
\begin{document}

\centerline{\bf  THE SCHUR $\Lie$-MULTIPLIER OF LEIBNIZ ALGEBRAS}

\bigskip
\bigskip
\centerline{\bf J. M. Casas and M. A. Insua}

\bigskip
\centerline{Dpto. Matemática Aplicada, Universidade de Vigo,  E. E. Forestal}
\centerline{Campus Universitario A Xunqueira, 36005 Pontevedra, Spain}
\centerline{ {E-mail addresses}: jmcasas@uvigo.es, avelino.insua@gmail.com}
\medskip

\date{}

\bigskip \bigskip \bigskip

{\bf Abstract:}   For a free presentation $0 \to \Lier \to \Lief \to \Lieg \to 0$ of a Leibniz algebra $\Lieg$, the Baer invariant ${\cal M}^{\Lie}(\Lieg) = \frac{\Lier \cap [\Lief, \Lief]_{\Lie}}{[\Lief, \Lier]_{\Lie}}$   is called the {\it Schur multiplier} of $\Lieg$ relative to the Liezation functor or Schur $\Lie$-multiplier. For a two-sided ideal $\Lien$ of a Leibniz algebra $\Lieg$, we construct a four-term exact sequence relating the Schur $\Lie$-multiplier of $\Lieg$ and $\Lieg/\Lien$, which is applied to study and  characterize $\Lie$-nilpotency, $\Lie$-stem covers and $\Lie$-capability of Leibniz algebras.
\bigskip

{\bf 2010 MSC:} 17A32, 18B99
\bigskip

{\bf Key words:} $\Lie$-central extension, Schur $\Lie$-multiplier, $\Lie$-nilpotent Leibniz algebra, $\Lie$-stem cover
\bigskip

%---------------------------------------------------------------------------------------

\section{Introduction}
In \cite{CVDL} the general theory of central extensions relative to a chosen subcategory of a base category introduced in \cite{JK} was considered in the context of semi-abelian categories \cite{JMT} relative to a Birkhoff subcategory. Examples like groups vs. abelian gropus, Lie algebras vs. vector spaces are absolute, meaning that they fit in the general theory when the considered Birkhoff subcategory is the subcategory of all abelian objects. An example of non absolute case is the category of Leibniz algebras together with the Birkhoff subcategory of Lie algebras. The general theory provides the notions of relative central extension and relative commutator wit respect to the Liezation functor $(-)_{\Lie} : \Leib \to \Lie$ which assigns to a Leibniz algebra $\Lieg$ the Lie algebra ${\Lieg_\Lie} = \Lieg / \Lieg^{\rm ann}$, where $ \Lieg^{\rm ann} = \langle \{ [x,x] : x \in \Lieg \} \rangle$.

Recently in  \cite{BC, CKh} properties concerning central extensions and commutators where translated from the absolute case to the relative one. In concrete the characterization of central extensions, capability and nilptency of Leibniz algebras relative to the Liezation functor  by means homological machinery was provided as well as a systematic study of isoclinism of Leibniz algebras relative to the Liezation functor. All these relative notions with respect the Liezation functor are named with the prefix $\Lie$-.

Our goal in the current paper is to continue analyzing the behavior of absolute properties when they fit into the relative context. In concrete we study the application of the relative Schur multiplier with respect to the Liezation functor  of a Leibniz algebra, called Schur $\Lie$-multiplier,
to characterize $\Lie$-nilpotency, $\Lie$-stem covers and $\Lie$-capability of Leibniz algebras.

In order to reach our goals, the content is organized as follows: in section \ref{preliminaries} we recall basic facts concerning the Liezation functor like the notions of $\Lie$-central extension, $\Lie$-commutator and $\Lie$-homology of Leibniz algebras. In subsection \ref{nil} we recall the notion of $\Lie$-nilpotent Leibniz algebra, providing the classification up to dimension 4 of complex $\Lie$-nilpotent non-Lie Leibniz algebras, as well as the characterization of $\Lie$-nilpotency through the $\Lie$-normalizer condition. In section \ref{Schur multiplier}, for a free presentation $0 \to \Lier \to \Lief \to \Lieg \to 0$ of a Leibniz algebra $\Lieg$, the Baer invariant ${\cal M}^{\Lie}(\Lieg) = \frac{\Lier \cap [\Lief, \Lief]_{\Lie}}{[\Lief, \Lier]_{\Lie}}$ \cite{CVDL}   called the {\it Schur \Lie-multiplier} of $\Lieg$, and for a two-sided ideal $\Lien$ of a Leibniz algebra $\Lieg$, we construct a four-term exact sequence relating the Schur $\Lie$-multiplier of $\Lieg$ and $\Lieg/\Lien$ (see (\ref{exact seq})), which is useful to characterize $\Lie$-nilpotent Leibniz algebras. By the way, in case of finite dimension, some formulas concerning the dimension of the Schur $\Lie$-multiplier are derived. In section \ref{stem cover} we deal with the interplay between the Schur $\Lie$-multiplier and $\Lie$-stem covers. In particular, we prove that any finite-dimensional Leibniz algebras has at least one $\Lie$-stem cover. Section \ref{capability} is devoted to analyze the conections between the precise $\Lie$-center introduced in \cite{CKh} and the Schur $\Lie$-multiplier, from here we obtain a characterization of $\Lie$-capable Leibniz algebras.

%---------------------------------------------------------------------------------------

\section{Preliminary results on Leibniz algebras} \label{preliminaries}
We fix $\mathbb{K}$ as a ground field such that $\frac{1}{2} \in \mathbb{K}$. All vector spaces and tensor products are considered over $\mathbb{K}$.

A \emph{Leibniz algebra} \cite{Lo 1, Lo 2, LP} is a vector space $\eq$  equipped with {a linear map} $[-,-] : \Lieq \otimes \Lieq \to \Lieq$, usually called the \emph{Leibniz bracket} of $\eq$,  satisfying the \emph{Leibniz identity}:
\[
 [x,[y,z]]= [[x,y],z]-[[x,z],y], \quad x, y, z \in \Lieq.
\]

Leibniz algebras { constitute a variety of $\Omega$-groups \cite{Hig}, hence it is a semi-abelian variety \cite{CVDL, JMT}} denoted by {\Leib}, whose morphisms are linear maps that preserve the Leibniz bracket.

 A subalgebra ${\eh}$ of a Leibniz algebra ${\Lieq}$ is said to be \emph{left (resp. right) ideal} of ${\Lieq}$ if $ [h,q]\in {\eh}$  (resp.  $ [q,h]\in {\eh}$), for all $h \in {\eh}$, $q \in {\Lieq}$. If ${\eh}$ is both
left and right ideal, then ${\eh}$ is called \emph{two-sided ideal} of ${\Lieq}$. In this case $\Lieq/\Lieh$ naturally inherits a Leibniz algebra structure.
{Note that the notion of two-sided ideal coincides with the categorical notion of normal subobject (i.e. kernel) so that the quotient $\Lieq/\Lieh$ is the cokernel of the kernel $\Lieh \to \Lieq$.}

For a Leibniz algebra ${\Lieq}$, we denote by ${\Lieq}^{\rm ann}$ the subspace of ${\Lieq}$ spanned by all elements of the form $[x,x]$, $x \in \Lieq$. Further, we consider
\[
Z^r({\Lieq}) = \{a \in {\Lieq} \mid [x,a] = 0,\ x \in {\Lieq}\},\quad Z({\Lieq}) = \{a \in {\Lieq} \mid [x,a] = 0=[a,x],\ x \in {\Lieq}\}
\]
 and call the \emph{right center} and  \emph{center}  of $\Lieq$, respectively.  It is proved in \cite[Lemma 1.1]{KP} that both ${\Lieq}^{\rm ann}$ and $Z^r({\Lieq})$ are two-sided ideals of $\Lieq$. It is obvious that $Z({\Lieq})$ is also a  two-sided ideal of $\Lieq$.

%---------------------------------------------------------------------------------------
\subsection{The Liezation functor}
Given a Leibniz algebra $\Lieq$, it is clear that the quotient ${\Lieq}_ {_{\rm Lie}}=\Lieq/{\Lieq}^{\rm ann}$ is a Lie algebra. This defines the so-called  \emph{Liezation functor} $(-)_{\Lie} : {\Leib} \to {\Lie}$, which assigns to a Leibniz algebra $\Lieq$ the Lie algebra ${\Lieq}_{_{\rm Lie}}$. Moreover, the canonical epimorphism  ${\Lieq} \twoheadrightarrow {\Lieq}_ {_{\rm Lie}}$ is universal among all homomorphisms from $\Lieq$ to a Lie algebra, implying that the Liezation functor is left adjoint to the inclusion functor $ {\Lie} \hookrightarrow {\Leib}$. These facts provide the following adjoint pair
\begin{equation} \label{adjoint pair}
\xymatrix@1{{\Leib} \ar@<1ex>[r]^-{(-)_{\Lie}} \ar@{}[r]|-{\perp} & {\Lie} \ar@<1ex>[l]^-{\supset}},
\end{equation}
 The general theory of central extensions relative to a chosen subcategory of a base category introduced in \cite{JK}, was adapted to the setting of semi-abelian categories relative a Birkhoff subcategory in \cite{CVDL}. Since $\Lie$ is a subvariety of $\Leib$, then it is a Birkhoff subcategory of $\Leib$, then the particular case corresponding to the adjoint pair (\ref{adjoint pair})  provides the following concepts relative to the Liezation functor $(-)_{\Lie}$ (see \cite{CKh, CVDL} for details), hence named with the prefix \Lie.

 For a Leibniz algebra {\Lieq} and two-sided ideals  ${\Liem}$ and ${\Lien}$ of  ${\Lieq}$, the \Lie-{\it centralizer} of ${\Liem}$ and ${\Lien}$ over  ${\Lieq}$ is
\[
C_{\Lieq}^{\Lie}({\Liem} , {\Lien}) = \{q \in {\Lieq} \mid  \; [q, m] + [m,q] \in {\Lien}, \; \text{for all} \;
m \in {\Liem} \} \; .
\]

\noindent The \Lie-{\it commutator} $[\Liem,\Lien]_{\Lie}$ is the subspace  of $\Lieq$ spanned by all elements of the form $[m,n]+[n,m]$, $m \in \Liem$, $n \in \Lien$.

\begin{Le}\label{Lemma3.3} \cite[Lemma 1]{CKh}  Let $\Lieq$ be a Leibniz algebra and $\Liem$, $\Lien$ be two-sided ideals of $\Lieq$. Then both $C_{\Lieq}^{\Lie}({\Liem} , {\Lien})$  and $[\Liem,\Lien]_{\Lie}$ are  two-sided ideals of $\Lieq$. Moreover, $ Z(\Lieq)\subseteq C_{\Lieq}^{\Lie}({\Liem} , {\Lien}) $ and $[\Liem,\Lien]_{\Lie}\subseteq Z^r(\Lieq)$.
\end{Le}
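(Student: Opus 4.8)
The plan is to treat the four assertions separately; bilinearity of the bracket already makes all four objects linear subspaces of $\Lieq$, so only the ideal conditions and the two inclusions need proof.

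First I would clear the easy points. For $[\Liem,\Lien]_\Lie\subseteq Z^r(\Lieq)$: given $m\in\Liem$, $n\in\Lien$, $x\in\Lieq$, the Leibniz identity yields $[x,[m,n]]=[[x,m],n]-[[x,n],m]$ and $[x,[n,m]]=[[x,n],m]-[[x,m],n]$, so $[x,[m,n]+[n,m]]=0$, and since $Z^r(\Lieq)$ is a subspace the inclusion follows. The inclusion $Z(\Lieq)\subseteq C_{\Lieq}^{\Lie}(\Liem,\Lien)$ is immediate, since a central $q$ has $[q,m]+[m,q]=0\in\Lien$. That $[\Liem,\Lien]_\Lie$ is a two-sided ideal then splits into: closedness under $[x,-]$, which is exactly the vanishing just observed; and closedness under $[-,x]$, for which I would use that right multiplication by $x$ is a derivation (a restatement of the Leibniz identity) to write $[[m,n],x]=[[m,x],n]+[m,[n,x]]$ and $[[n,m],x]=[[n,x],m]+[n,[m,x]]$, then regroup the sum as $\bigl([m,[n,x]]+[[n,x],m]\bigr)+\bigl([[m,x],n]+[n,[m,x]]\bigr)$; both parentheses lie in $[\Liem,\Lien]_\Lie$ because $[n,x]\in\Lien$ and $[m,x]\in\Liem$, the ideals being two-sided.

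The substantive part is that $C:=C_{\Lieq}^{\Lie}(\Liem,\Lien)$ is a two-sided ideal, and I would attack it in two stages. First, $[C,\Lieq]\subseteq C$: for $q\in C$, $x\in\Lieq$, $m\in\Liem$, expanding $[[q,x],m]=[[q,m],x]+[q,[x,m]]$ (derivation property of $[-,m]$) and $[m,[q,x]]=[[m,q],x]-[[m,x],q]$ (Leibniz identity) and adding gives
\[
[[q,x],m]+[m,[q,x]]=[[q,m]+[m,q],x]+[q,[x,m]]-[[m,x],q].
\]
Here $[[q,m]+[m,q],x]\in\Lien$ since $[q,m]+[m,q]\in\Lien$ and $\Lien$ is an ideal; applying the definition of $C$ to $[x,m]\in\Liem$ gives $[q,[x,m]]\equiv-[[x,m],q]\pmod{\Lien}$, so the right-hand side is congruent mod $\Lien$ to $-[[x,m]+[m,x],q]$; and since $[x,m]+[m,x]$ lies in $Z^r(\Lieq)\cap\Liem$ one has $[q,[x,m]+[m,x]]=0$, hence $[[x,m]+[m,x],q]\in\Lien$ by the definition of $C$ once more. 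So $[q,x]\in C$, and in particular $C$ is a subalgebra and a right ideal.

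Second, $[\Lieq,C]\subseteq C$: because $C$ is a right ideal, $[q,x]\in C$, and $[x,q]+[q,x]\in\Lieq^{\rm ann}$, the decomposition $[x,q]=-[q,x]+\bigl([x,q]+[q,x]\bigr)$ reduces the point to showing that $[x,q]+[q,x]\in C$ for all $x\in\Lieq$ and $q\in C$; equivalently, after passing to $\Lieq/\Lien$ (of which $C$ is the full preimage under $\Lieq\to\Lieq/\Lien$ of the corresponding object), that $[\Lieq,C]_\Lie\subseteq C$. This is the step I expect to be the main obstacle: iterating the Leibniz identity, the derivation property and $\Lieq^{\rm ann}\subseteq Z^r(\Lieq)$ tends to collapse into tautologies, and the clean way to close it is the route of \cite{CKh}, deriving the normality of $C$ from the general theory of relative centralizers for a Birkhoff reflector in a semi-abelian category (here the Liezation functor), equivalently from the presentation of $C$ as the join of the two-sided ideals $\Lieh$ of $\Lieq$ satisfying $[\Lieh,\Liem]_\Lie\subseteq\Lien$. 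The last assertion of the lemma, $[\Liem,\Lien]_\Lie\subseteq Z^r(\Lieq)$, has already been established above.
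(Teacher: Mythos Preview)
The paper gives no proof of this lemma; it simply records the statement with a citation to \cite[Lemma~1]{CKh}. Your attempt therefore goes well beyond the paper, and the elementary arguments for $[\Liem,\Lien]_{\Lie}\subseteq Z^r(\Lieq)$, for $Z(\Lieq)\subseteq C_{\Lieq}^{\Lie}(\Liem,\Lien)$, for the two-sided ideal property of $[\Liem,\Lien]_{\Lie}$, and for the inclusion $[C,\Lieq]\subseteq C$ are all correct. (A terminological quibble: in the paper's convention a \emph{left} ideal is one with $[\Lieh,\Lieq]\subseteq\Lieh$, so your ``right ideal'' step is the paper's left-ideal condition; this does not affect the mathematics.)

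The gap you flag in the remaining step $[\Lieq,C]\subseteq C$ is genuine, and your diagnosis is accurate: reducing $[[x,q],m]+[m,[x,q]]$ modulo $\Lien$ leads to $[[x,q]+[q,x],m]\equiv [x,[q,m]]+[[q,m],x]$, and no further identity-chasing forces this into $\Lien$ from the hypothesis $q\in C$ alone. Your fallback --- invoking the relative-centralizer machinery of \cite{CKh} --- is exactly what the paper does (by bare citation) for the entire lemma, so at that point you and the paper coincide. One caution: the alternative ``join'' description you offer is circular as written, since exhibiting $C$ itself as one of the two-sided ideals $\Lieh$ with $[\Lieh,\Liem]_{\Lie}\subseteq\Lien$ already presupposes that $C$ is two-sided.
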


In particular, the two-sided ideal $C_{\Lieq}^{\Lie}(\Lieq , 0)$ is the \emph{$\Lie$-center} of the Leibniz algebra $\Lieq$ and it will be denoted by $Z_{\Lie}(\Lieq)$, that is,
\[
Z_{\Lie}(\Lieq) =  \{ z\in \Lieq\,|\,\text{$[q,z]+[z,q]=0$ for all $q\in \Lieq$}\}.
\]

An extension of Leibniz algebras $f : \Lieg \twoheadrightarrow \Lieq$ with ${\Lien} =  \Ker(f)$ is said to be  $\Lie$-{\it central} if
 $\Lien \subseteq Z_{\Lie}(\Lieg)$, equivalently  $[\Lien,\Lieg]_{\Lie} = 0$ (see \cite[Proposition 1]{CKh}).

Homological machinery relative to the Liezation functor as a particular case of the semi-abelian framework \cite{Ev, EVDL, EVDL1} provide that the first and second  homologies relative to the Liezation functor of a Leibniz algebra $\Lieg$ are given by  $H_1(\Lieg, (-)_{\Lie}) \equiv HL^{\Lie}_1(\Lieg) =\Lieg_{\Lie}$ and $H_2(\Lieg, (-)_{\Lie}) \equiv HL^{\Lie}_2(\Lieg) \cong \frac{\Lier \cap [\Lief, \Lief]_{\Lie}}{[\Lief, \Lier]_{\Lie}}$, for any free presentation $0 \to \Lier \to \Lief \to \Lieg \to 0$. Moreover, these relative invariants are related by the six-term exact sequence
\begin{equation}\label{six-term}
 \Lien\otimes \Lieg_{\Lie} \longrightarrow HL^{\Lie}_2(\Lieg) \longrightarrow HL^{\Lie}_2(\q) \stackrel{\theta}\longrightarrow \frak{n} \longrightarrow HL^{\Lie}_1(\Lieg) \longrightarrow HL^{\Lie}_1(\q) \longrightarrow 0.
\end{equation}
provided that $f:\Lieg\twoheadrightarrow \Lieq$,  with $\Lien=\Ker(f)$, is a  $\Lie$-central extension of Leibniz algebras (see \cite[Proposition 2]{CKh}).

%---------------------------------------------------------------------------------------
\subsection{$\Lie$-nilpotent Leibniz algebras} \label{nil}

The notion of relative commutator allow the introduction of lower and upper \Lie-central series and, consequently, the notion of \Lie-nilpotent Leibniz algebra (see \cite{CKh} for details).

\begin{De}\label{lcs}
The lower $\Lie$-central series of a Leibniz algebra $\Lieq$  is the sequence
\[
\cdots \trianglelefteq{\Lieq}^{[i]} \trianglelefteq \cdots \trianglelefteq {\Lieq}^{[2]}  \trianglelefteq {\Lieq}^{[1]}
\]
of two-sided ideals of $\Lieq$ defined inductively by
 \[
 {\Lieq}^{[1]} = {\Lieq} \quad \text{and} \quad
{\Lieq}^{[i]} =[{\Lieq}^{[i-1]},{\Lieq}]_{\Lie} ,  \   i \geq 2  .
 \]

A Leibniz algebra ${\Lieq}$ is said to be $\Lie$-nilpotent with class of nilpotency $k$ if and only if ${\Lieq}^{[k+1]}=0$ and
${\Lieq}^{[k]}\ne 0$.
\end{De}

\begin{De}
The upper $\Lie$-central series of a Leibniz algebra ${\Lieq}$ is the sequence of
 two-sided ideals
 \[
{\ze}_0^{\Lie}({\Lieq}) \trianglelefteq {\ze}_1^{\Lie}({\Lieq}) \trianglelefteq \cdots \trianglelefteq {\ze}_i^{\Lie}({\Lieq}) \trianglelefteq \cdots
\]
 defined inductively by
\[
{\ze}_0^{\Lie}({\Lieq}) = 0 \quad \text{and} \quad
 {\ze}_{i}^{\Lie}({\Lieq}) = C_{\Lieq}^{\Lie}({\Lieq},{\ze}_{i-1}^{\Lie}({\Lieq})) , \  i \geq 1 .
 \]

 A Leibniz algebra ${\Lieq}$ is said to be $\Lie$-nilpotent with class of $\Lie$-nilpotency k if and only if
${\ze}_k^{\Lie}({\Lieq}) = {\Lieq}$ and ${\ze}_{k-1}^{\Lie}({\Lieq}) \ne {\Lieq}$.
\end{De}

\begin{Pro} \cite[Proposition 10]{CKh}\label{nilpotency}\
\begin{enumerate}
\item[(a)] If ${\Lieq}/Z_{\Lie}({\Lieq}$) is a $\Lie$-nilpotent Leibniz algebra, then ${\Lieq}$ is a $\Lie$-nilpotent Leibniz algebra.

\item[(b)] If ${\Lieq}$ is a $\Lie$-nilpotent and non trivial Leibniz algebra, then $Z_{\Lie}({\Lieq})\not = 0$.

\item[(c)] If $\Lieg\twoheadrightarrow\Lieq$ is a $\Lie$-central extension of a $\Lie$-nilpotent Leibniz algebra $\Lieq$, then $\Lieg$ is $\Lie$-nilpotent as well.

%\item[(d)]  A $\Lie$-nilpotent Leibniz algebra is $\Lie$-solvable as well.
\end{enumerate}
\end{Pro}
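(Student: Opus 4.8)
The plan is to reduce all three statements to two elementary facts about the lower $\Lie$-central series and the $\Lie$-center. First I would record that a surjective Leibniz homomorphism $f:\Lieg\twoheadrightarrow\Lieq$ satisfies $f([\Liea,\Lieb]_{\Lie})=[f(\Liea),f(\Lieb)]_{\Lie}$ for two-sided ideals $\Liea,\Lieb\trianglelefteq\Lieg$, because $f$ sends a spanning element $[a,b]+[b,a]$ of $[\Liea,\Lieb]_{\Lie}$ to the spanning element $[f(a),f(b)]+[f(b),f(a)]$ of $[f(\Liea),f(\Lieb)]_{\Lie}$ and is onto; an easy induction then yields $f(\Lieg^{[i]})=\Lieq^{[i]}$ for all $i\ge 1$. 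Applied to a quotient map this gives $(\Lieq/\Lien)^{[i]}=(\Lieq^{[i]}+\Lien)/\Lien$, hence quotients of $\Lie$-nilpotent Leibniz algebras are $\Lie$-nilpotent of no larger class. Second I would note, straight from the definition $Z_{\Lie}(\Lieq)=\{z\in\Lieq\mid [q,z]+[z,q]=0 \text{ for all } q\}$, that a two-sided ideal $\Liea\trianglelefteq\Lieq$ satisfies $[\Liea,\Lieq]_{\Lie}=0$ if and only if $\Liea\subseteq Z_{\Lie}(\Lieq)$; in particular $[Z_{\Lie}(\Lieq),\Lieq]_{\Lie}=0$, so $\Lieq^{[i]}\subseteq Z_{\Lie}(\Lieq)$ forces $\Lieq^{[i+1]}=0$. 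Lemma \ref{Lemma3.3} guarantees that all subspaces appearing here really are two-sided ideals, so these manipulations are legitimate.

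With these in hand the three parts are short. For (b), if $\Lieq\ne 0$ is $\Lie$-nilpotent of class $k$ then $\Lieq^{[k]}\ne 0$ while $[\Lieq^{[k]},\Lieq]_{\Lie}=\Lieq^{[k+1]}=0$, so $0\ne\Lieq^{[k]}\subseteq Z_{\Lie}(\Lieq)$ by the second fact. For (a), writing $\overline{\Lieq}=\Lieq/Z_{\Lie}(\Lieq)$ and assuming $\overline{\Lieq}^{[k+1]}=0$, the first fact gives $\overline{\Lieq}^{[k+1]}=(\Lieq^{[k+1]}+Z_{\Lie}(\Lieq))/Z_{\Lie}(\Lieq)$, hence $\Lieq^{[k+1]}\subseteq Z_{\Lie}(\Lieq)$, whence $\Lieq^{[k+2]}=[\Lieq^{[k+1]},\Lieq]_{\Lie}\subseteq[Z_{\Lie}(\Lieq),\Lieq]_{\Lie}=0$ and $\Lieq$ is $\Lie$-nilpotent. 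For (c), if $f:\Lieg\twoheadrightarrow\Lieq$ is $\Lie$-central with kernel $\Lien\subseteq Z_{\Lie}(\Lieg)$ and $\Lieq$ is $\Lie$-nilpotent, then $\Lieg/Z_{\Lie}(\Lieg)$ is a quotient of $\Lieg/\Lien\cong\Lieq$, hence $\Lie$-nilpotent, and part (a) applies; equivalently, $f(\Lieg^{[i]})=\Lieq^{[i]}$ forces $\Lieg^{[k+1]}\subseteq\Lien\subseteq Z_{\Lie}(\Lieg)$ when $\Lieq$ has class $k$, so $\Lieg^{[k+2]}=0$.

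The computations are entirely routine; the only point demanding a little attention is the compatibility of $[-,-]_{\Lie}$ and of $Z_{\Lie}(-)$ with surjections — that is, establishing the two facts above — but each of these is immediate once one checks behaviour on spanning elements. I do not anticipate any genuine obstacle.
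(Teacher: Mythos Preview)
Your argument is correct. Note, however, that the paper does not actually prove this proposition: it is quoted verbatim from \cite[Proposition~10]{CKh} and stated without proof. The only closely related proof in the paper is that of the very next (unnumbered) proposition, which sharpens parts (a) and (c) to ``if and only if'' statements; there the authors argue exactly as you do for (a) --- from $(\Lieg/\Lieh)^{[k]}=0$ they infer $\Lieg^{[k]}\subseteq\Lieh\subseteq Z_{\Lie}(\Lieg)$ and hence $\Lieg^{[k+1]}\subseteq[\Lieh,\Lieg]_{\Lie}=0$ --- and then deduce (c) from (a), just as in your proposal. So your approach matches the spirit of what the paper does prove, and supplies in addition a proof of (b), which the paper simply imports from \cite{CKh}.
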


\begin{Ex}\
\begin{enumerate}
\item[(a)] {Lie algebras  are $\Lie$-nilpotent Leibniz algebras of class 1. In particular, vector spaces considered as abelian Lie (Leibniz) algebras are $\Lie$-nilpotent Leibniz algebras of class 1.}
    \item[(b)]  Subalgebras and images by homomorphisms of $\Lie$-nilpotent Leibniz algebras are $\Lie$-nilpotent Leibniz algebras.
\item[(c)] Intersection and sum of $\Lie$-nilpotent two-sided ideals of a Leibniz algebra are $\Lie$-nilpotent two-sided ideals as well.

\item[(d)] From the classifications of two-dimensional Leibniz algebras in \cite{Cu}, three-dimensional Leibniz algebras in \cite{CILL}, four dimensional Leibniz algebras in \cite{CK, DMS}  and having in mind that all Lie algebras are \Lie-nilpotent Leibniz algebras, in the following table we present the isomorphism classes of low-dimensional \Lie-nilpotent  non-Lie Leibniz algebras over the field $\mathbb{C}$ of complex numbers.
\[
\begin{tabular}{|l|l|c|}
  \hline
  % after \\: \hline or \cline{col1-col2} \cline{col3-col4} ...
{\bf basis} & {\bf multiplication} & {\bf class of} \\
 & & {\bf \Lie-nilp.}\\
  \hline  \hline
   $\{a_1,a_2\}$ & $[a_2,a_2]=a_1$ & 2 \\
   \hline  \hline
   $\{a_1,a_2,a_3\}$ &  $[a_2,a_2]= \gamma a_1; [a_3,a_2]=a_1; [a_3,a_3]=a_1$& 2 \\
   \hline
     & $[a_3,a_3]=a_1$ & 2\\
    \hline
     & $[a_2,a_2]=a_1; [a_3,a_3]=a_1$ & 2\\
    \hline
    & $[a_2,a_3]=a_2; [a_3,a_2]=-a_2; [a_3,a_3]=a_1$& 2\\
     \hline
     & $[a_1,a_3]=a_2;  [a_3,a_3]=a_1$ & 3\\
   \hline  \hline

   $\{a_1,a_2,a_3,a_4\}$ & $[a_1,a_3]=a_4; [a_3,a_2]=a_4$ &2 \\
   \hline
    & $[a_1,a_3]=a_4; [a_2,a_2]=a_4; [a_2,a_3]=a_4;$ & 2\\
    & $ [a_3,a_1]= a_4; [a_3,a_2]= -a_4$ & \\
    \hline
  &$[a_1,a_2]=a_4;[a_2,a_1]=-a_4; [a_3,a_3]=a_4$ & 2 \\
  \hline
   & $[a_1,a_2]=a_4; [a_2,a_1]=-a_4;[a_2,a_2]= a_4; [a_3,a_3]=a_4$ & 2 \\
  \hline
   & $[a_1,a_2]=a_4; [a_2,a_1]= c a_4; [a_3,a_3]=a_4, c \in \mathbb{C} \backslash\{1,-1\}$ & 2\\
  \hline
   & $[a_1,a_1]=a_4; [a_2,a_2]=a_4; [a_3,a_3]=a_4$& 2\\
  \hline
   & $[a_1,a_1]=a_2; [a_1,a_2]=a_3; [a_1,a_3]=a_4$& 4\\
  \hline
   &  $[a_1,a_1]=a_4; [a_1,a_2]=a_3; [a_2,a_1]= - a_3$ & 2\\
  \hline
 & $[a_1,a_1]=a_4; [a_1,a_2]=a_3; [a_2,a_1]= - a_3; [a_2,a_2]=a_4$ & 2\\
  \hline
   &  $[a_1,a_1]=a_4; [a_1,a_2]=a_3; [a_2,a_1]= - a_3;$  & 2\\
   & $[a_1,a_3]=a_4; [a_3,a_1]=-a_4$ &\\
  \hline

   \hline
   \end{tabular}
   \]

   \[
\begin{tabular}{|l|l|c|}
  \hline
  % after \\: \hline or \cline{col1-col2} \cline{col3-col4} ...
 {\bf basis} & {\bf multiplication} & {\bf class of} \\
 & & {\bf \Lie-nilp.}\\
  \hline  \hline

    &  $[a_1,a_2]=a_3; [a_2,a_1]= - a_3; [a_2,a_2]= a_4;$  & 2\\
   & $[a_1,a_3]=a_4; [a_3,a_1]=-a_4$ &\\
  \hline
    &  $[a_1,a_1]=a_4; [a_1,a_2]= a_3; [a_2,a_1]= -a_3 + a_4;$  & 2\\
   & $[a_1,a_3]=a_4; [a_3,a_1]=-a_4$ &\\
  \hline
   &  $[a_2,a_2]=a_4; [a_1,a_2]= a_3; [a_2,a_1]= -a_3 + a_4;$  & 2\\
   & $[a_1,a_3]=a_4; [a_3,a_1]=-a_4$ &\\
   \hline
   & $[a_1,a_1]=a_3;  [a_1,a_2]=  a_4$ & 2\\
    \hline
   & $[a_1,a_1]=a_3;  [a_2,a_1]=  a_4$ & 2\\
  \hline
   &  $[a_1,a_2]=a_4;  [a_2,a_1]=  a_3; [a_2,a_2] = -a_3$ & 2\\
  \hline
   & $[a_1, a_1]=a_3; [a_1,a_2]=a_4;  [a_2,a_1]= \alpha a_4, \alpha \in \mathbb{C} \backslash \{-1, 0\}$ & 2 \\
  \hline
   & $[a_1, a_1]=a_3; [a_1,a_2]= \alpha a_3;  [a_2,a_1]= a_4; $  & 2\\
   &  $[a_2, a_2] = - a_4, \alpha \in \mathbb{C} \backslash \{-1 \}$  & \\
  \hline
  & $[a_1, a_1]=a_3; [a_1,a_2]= a_3;  [a_2,a_1]= a_3 + a_4; [a_2, a_2] = a_4$  & 2\\
  \hline
   & $[a_1, a_2]=a_3; [a_1,a_3]= a_4$& 3\\
  \hline
   & $[a_1, a_2]=a_3; [a_1,a_3]= a_4; [a_2, a_2]=a_4$ & 3 \\
  \hline
   & $[a_1, a_2]=a_3; [a_1,a_3]= a_4; [a_2, a_1]=a_4$ & 3 \\
  \hline
   & $[a_1, a_2]=a_3; [a_2, a_1]=a_4; [a_2,a_2]=a_4;[a_1,a_3]= a_4$ & 3 \\
  \hline
  & $[a_1, a_1]=a_3; [a_2, a_1]=a_4; [a_1,a_3]= a_4$ & 3 \\
  \hline
   & $[a_1, a_1]=a_3; [a_2,a_2]=a_4;[a_1,a_3]= a_4$ & 3 \\
  \hline
    & $[a_1, a_2]=a_3; [a_2,a_1]=- a_3;[a_1,a_4]= a_1; [a_2,a_4]= - a_2;$ & 2\\
   & $[a_4, a_1]= - a_1; [a_4,a_2]= a_2;[a_4,a_4]= a_3$& \\
  \hline
   & $[a_1, a_1]= a_3; [a_2,a_4]= a_2;[a_4,a_2]= - a_2$ & 2\\
  \hline
   &  $[a_1, a_1]= a_3; [a_2,a_4]= a_2;[a_4,a_1]= a_3; [a_4,a_2]= - a_2;$ & 2\\
   & $[a_4, a_4] = \lambda a_3, \, \lambda \neq 0$ & \\
  \hline
   &  $[a_1, a_1]= a_3; [a_2,a_4]= a_2;[a_4,a_2]= - a_2; [a_4,a_4]= - 2a_3$  & 2 \\
  \hline
   & $[a_1, a_4]= a_1; [a_2,a_4]= \mu_2 a_2;[a_4,a_1]= - a_1; [a_4,a_2]= - \mu_2 a_2;$   & 2\\
   & $[a_4,a_4]= a_3$ & \\
  \hline
   &  $[a_1, a_4]= a_1; [a_4,a_1]= - a_1;[a_4,a_2]= a_3$  &2 \\
  \hline
   & $[a_1, a_4]= a_1 + a_2; [a_2,a_4]= a_2;[a_4,a_1]= - a_1 - a_2; $  & 2\\
   & $[a_4,a_2]= - a_2;[a_4, a_4] = a_3$ & \\
  \hline
   & $[a_1, a_4]= a_2; [a_3,a_4]= a_3;[a_4,a_1]= \alpha a_2; [a_4,a_3] = -a_3, \, \alpha \neq -1$   &2 \\
  \hline
   & $[a_1, a_4]= a_2; [a_3,a_4]= a_3;[a_4,a_1]= - a_2;$   & 2\\
   & $[a_4, a_3]= - a_3; [a_4,a_4]= a_2$  & \\
  \hline
   & $[a_1, a_4]=a_2; [a_3,a_4]= a_3;[a_4,a_3]= - a_3; [a_4,a_4]=a_1$   & 3 \\
  \hline
\end{tabular}
\]

 \end{enumerate}
\end{Ex}
 \bigskip

Now we complete the characterizations of $\Lie$-nilpotent Leibniz algebras given in \cite{CKh}.

\begin{Pro}\
\begin{enumerate}
\item [(a)] Let $\Lieh$ be a two-sided ideal of a Leibniz algebra $\Lieg$ such that $\Lieh \subseteq Z_{\Lie}(\Lieg)$. Then $\Lieg$ is \Lie-nilpotent if and only if $\Lieg/\Lieh$ is \Lie-nilpotent.

 \item[(b)]  Let $f : \Lieg \twoheadrightarrow \Lieq$ be a \Lie-central extension of a Leibniz algebra $\Lieq$. Then $\Lieg$ is \Lie-nilpotent if and only if $\Lieq$ is \Lie-nilpotent.
\end{enumerate}
\end{Pro}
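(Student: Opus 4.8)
The plan is to reduce both parts to Proposition~\ref{nilpotency}(c) together with the observation, recalled in the Example above, that homomorphic images of $\Lie$-nilpotent Leibniz algebras are again $\Lie$-nilpotent.

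First I would note that (a) and (b) are two guises of the same fact. In (b), $\Lie$-centrality of $f$ means precisely that $\Lien=\Ker(f)\subseteq Z_{\Lie}(\Lieg)$, and $\Lieq\cong\Lieg/\Lien$; conversely, given $\Lieh$ as in (a), the canonical projection $\pi:\Lieg\twoheadrightarrow\Lieg/\Lieh$ has kernel $\Lieh\subseteq Z_{\Lie}(\Lieg)$, so it is a $\Lie$-central extension in the sense recalled in Section~\ref{preliminaries}. Hence it suffices to establish (b) and then specialize to $f=\pi$, $\Lieq=\Lieg/\Lieh$ for (a).

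For the forward direction of (b): if $\Lieg$ is $\Lie$-nilpotent, then $\Lieq$, being the image of $\Lieg$ under the homomorphism $f$, is $\Lie$-nilpotent. For the converse, if $\Lieq$ is $\Lie$-nilpotent then, since $f$ is a $\Lie$-central extension of $\Lieq$, Proposition~\ref{nilpotency}(c) immediately gives that $\Lieg$ is $\Lie$-nilpotent.

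If one prefers a self-contained argument not invoking Proposition~\ref{nilpotency}(c), the converse can also be obtained by bookkeeping on the lower $\Lie$-central series: for a surjection with kernel $\Lieh$ one checks by induction that $(\Lieg/\Lieh)^{[i]}=(\Lieg^{[i]}+\Lieh)/\Lieh$, so the hypothesis $(\Lieg/\Lieh)^{[k+1]}=0$ forces $\Lieg^{[k+1]}\subseteq\Lieh\subseteq Z_{\Lie}(\Lieg)$, and therefore $\Lieg^{[k+2]}=[\Lieg^{[k+1]},\Lieg]_{\Lie}\subseteq[Z_{\Lie}(\Lieg),\Lieg]_{\Lie}=0$, whence $\Lieg$ is $\Lie$-nilpotent. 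I do not expect any genuine obstacle here; the only point that needs a word of justification is that $\Lieg\twoheadrightarrow\Lieg/\Lieh$ really is a $\Lie$-central extension, which is immediate from $\Lieh\subseteq Z_{\Lie}(\Lieg)$ (equivalently $[\Lieh,\Lieg]_{\Lie}=0$).
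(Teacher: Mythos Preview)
Your proposal is correct. The only difference from the paper is organizational: the paper proves (a) first via the lower $\Lie$-central series computation---exactly your ``self-contained'' alternative, up to an immaterial index shift---and then deduces (b) as a direct consequence of (a), whereas you deduce (b) from Proposition~\ref{nilpotency}(c) and then specialize to obtain (a). Both reductions between (a) and (b) are immediate, so the two write-ups have the same mathematical content; your primary route simply outsources the nontrivial implication to the cited result from \cite{CKh}, while the paper reproves it in one line.
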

\begin{proof}
{\it (a)} The quotient of \Lie-nilpotent  Leibniz algebras is \Lie-nilpotent as well. Conversely, there exist $k \in \mathbb{N}$  such that $(\Lieg/\Lieh)^{[k]} = 0$, hence $\Lieg^{[k]} \subseteq \Lieh \subseteq Z_{\Lie}(\Lieg)$. Then $\Lieg^{[k+1]} =[\Lieg^{[k]}, \Lieg]_{\Lie} \subseteq [\Lieh, \Lieg]_{\Lie}=0$.

{\it (b)} is a direct consequence of {\it (a)}.
\end{proof}

\begin{De}
Let $\Liem$ be a subset of a Leibniz algebra $\Lieq$. The \Lie-normalizer of $\Liem$ is the subset of $\Lieq$:
$$N_{\Lieq}^{\Lie}(\Liem) = \{ q \in \Lieq \mid [q,m]+[m, q]\in \Liem, \text{\rm for all}\ m \in \Liem \}$$
\end{De}

\begin{Rem}
When $\Liem$ is a subalgebra of $\Lieq$, then $N_{\Lieq}^{\Lie}(\Liem)$ is not necessarily a subalgebra of $\Lieq$ as the following example shows: let $\Lieq$ be the five-dimensional complex Leibniz algebra with basis $\{e_1, e_2, e_3,e_4,e_5 \}$ and bracket operation given by (see \cite{Om})
\[
 \begin{array}{lll} [e_2,e_1]=-e_3, &
 [e_1,e_2]=e_3,& [e_1,e_3]=-2e_1 \\

[e_3,e_1]=2e_1, & [e_3,e_2]=-2e_2, & [e_2,e_3]=2e_2 \\

[e_5,e_1]=e_4, & [e_4,e_2]=e_5,& [e_4,e_3]=-e_4\\ & &
 [e_5,e_3]=e_5 \end{array}
\]
Consider the subalgebra $\Liem = \langle \{ e_1 \} \rangle$, then $N_{\Lieq}^{\Lie}(\Liem) = \langle \{ e_1, e_2, e_3, e_4 \} \rangle$ which is not a subalgebra.

On the other hand, if $\Liem$ is a two-sided ideal of $\Lieq$, then $N_{\Lieq}^{\Lie}(\Liem)$ is a two-sided ideal of $\Lieq$, since it  coincides with $C_{\Lieq}^{\Lie}(\Liem,\Liem)$ and \cite[Lemma 1]{CKh}. Furthermore, if $\Liem$ is a subalgebra of $\Lieq$, then $\Liem \subseteq N_{\Lieq}^{\Lie}(\Liem)$.
\end{Rem}

\begin{De}
It is said that a Leibniz algebra $\Lieq$ satisfies the \Lie-normalizer condition if every proper subalgebra of $\Lieq$ is properly contained in its normalizer.
\end{De}

\begin{Pro}
If $\Lieq$ is a \Lie-nilpotent Leibniz algebra, then $\Lieq$ satisfies the \Lie-normalizer condition.
\end{Pro}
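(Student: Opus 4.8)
The plan is to mimic the classical argument for nilpotent groups, using the upper \Lie-central series as the main tool. Let $\Lieh$ be an arbitrary proper subalgebra of $\Lieq$; I must produce an element $z \in N_{\Lieq}^{\Lie}(\Lieh) \setminus \Lieh$. Since $\Lieq$ is \Lie-nilpotent, its upper \Lie-central series
\[
0 = {\ze}_0^{\Lie}(\Lieq) \trianglelefteq {\ze}_1^{\Lie}(\Lieq) \trianglelefteq \cdots \trianglelefteq {\ze}_k^{\Lie}(\Lieq) = \Lieq
\]
reaches $\Lieq$ after finitely many steps. Because ${\ze}_0^{\Lie}(\Lieq) = 0 \subseteq \Lieh$ while ${\ze}_k^{\Lie}(\Lieq) = \Lieq \not\subseteq \Lieh$, there is a largest index $i$ with ${\ze}_i^{\Lie}(\Lieq) \subseteq \Lieh$; then $i < k$ and ${\ze}_{i+1}^{\Lie}(\Lieq) \not\subseteq \Lieh$, so I can choose $z \in {\ze}_{i+1}^{\Lie}(\Lieq) \setminus \Lieh$.

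Next I would exploit the defining property ${\ze}_{i+1}^{\Lie}(\Lieq) = C_{\Lieq}^{\Lie}(\Lieq, {\ze}_i^{\Lie}(\Lieq))$: for every $q \in \Lieq$ one has $[q,z] + [z,q] \in {\ze}_i^{\Lie}(\Lieq)$. Specializing to $q = h$ with $h \in \Lieh$ and using ${\ze}_i^{\Lie}(\Lieq) \subseteq \Lieh$, I get $[h,z] + [z,h] \in \Lieh$ for all $h \in \Lieh$, which is precisely the statement that $z \in N_{\Lieq}^{\Lie}(\Lieh)$. Since also $\Lieh \subseteq N_{\Lieq}^{\Lie}(\Lieh)$ (recorded in the Remark above, as $\Lieh$ is a subalgebra), and since $z \notin \Lieh$, this shows $\Lieh \subsetneq N_{\Lieq}^{\Lie}(\Lieh)$. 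As $\Lieh$ was an arbitrary proper subalgebra, $\Lieq$ satisfies the \Lie-normalizer condition.

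The argument is short and no step is a genuine obstacle; the only points needing care are invoking the correct form of \Lie-nilpotency, namely that the upper \Lie-central series terminates at $\Lieq$ (which is exactly the definition used above), and the routine bookkeeping with the chain of two-sided ideals ${\ze}_i^{\Lie}(\Lieq)$ to locate the first one not contained in $\Lieh$.
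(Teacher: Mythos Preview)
Your proof is correct and follows essentially the same route as the paper's: both use the upper $\Lie$-central series, locate the first term not contained in the given proper subalgebra (your ``largest $i$ with ${\ze}_i^{\Lie}(\Lieq)\subseteq\Lieh$'' is the paper's ``minimal $j$ with ${\ze}_j^{\Lie}(\Lieq)\nsubseteq\Lies$'' shifted by one), and then use the defining relation ${\ze}_{i+1}^{\Lie}(\Lieq)=C_{\Lieq}^{\Lie}(\Lieq,{\ze}_i^{\Lie}(\Lieq))$, equivalently $[\Lieq,{\ze}_{i+1}^{\Lie}(\Lieq)]_{\Lie}\subseteq{\ze}_i^{\Lie}(\Lieq)$, to conclude. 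The only cosmetic difference is that the paper shows $\Lies+{\ze}_j^{\Lie}(\Lieq)\subseteq N_{\Lieq}^{\Lie}(\Lies)$ in one stroke, whereas you extract a single witness $z$; both establish the required proper containment.
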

\begin{proof}
Let $\Lies$ be a proper subalgebra of $\Lieq$. Let $j \geq 1$ be the minimal integer such that $Z_j^{\Lie}(\Lieq)\nsubseteqq \Lies$ (there always exists such a $j$ thanks to \cite[Thoerem 4]{CKh}). Then $[\Lies, Z_j^{\Lie}(\Lieq)]_{\Lie} \subseteq [\Lieq, Z_j^{\Lie}(\Lieq)]_{\Lie} \subseteq Z_{j-1}^{\Lie}(\Lieq) \subseteq \Lies$. Thus $\Lies \subseteq \Lies + Z_j^{\Lie}(\Lieq) \subseteq N_{\Lieq}^{\Lie}(\Lies)$.
\end{proof}

%---------------------------------------------------------------------------------------------------------------------
\section{The Schur \Lie-multiplier of Leibniz algebras} \label{Schur multiplier}

For a free presentation $0 \to \Lier \to \Lief \stackrel{\rho}\to \Lieg \to 0$ of a Leibniz algebra $\Lieg$ and in analogy with the absolute case, the term {\large $\frac{\Lier \cap [\Lief, \Lief]_{\Lie}}{[\Lief, \Lier]_{\Lie}}$}
 is called the {\it Schur \Lie-multiplier} or the {\it Schur multiplier relative to the Liezation functor} of $\Lieg$, which is denoted by ${\cal M}^{\Lie}(\Lieg)$. As is reported in \cite{CKh}, the Schur \Lie-multiplier is isomorphic to $HL^{\Lie}_2(\Lieg)$ and it is a Baer invariant, which means that it does not depend on the chosen free presentation as explained for instance in \cite{EVDL}.

Our aim in this section is  to show the interplay between the Schur \Lie-multiplier and \Lie-nilpotent Leibniz algebras, as well as the obtention of several formulas concerning dimensions.

 \begin{Th} \label{Th 1}
 Let $\Lieg$ be a Leibniz algebra with a two-sided ideal $\Lieb$ and set the short exact sequence $0 \to \Lieb \to \Lieg \to \Liea \to 0$.  Then there exists a Leibniz algebra $\Lieq$ with a two-sided ideal $\Liem$ such that:
 \begin{enumerate}
 \item[(a)] $[\Lieg, \Lieg]_{\Lie} \cap \Lieb \cong \frac{\Lieq}{\Liem}$.
 \item[(b)] $\Liem \cong {\cal M}^{\Lie}(\Lieg)$.
 \item[(c)] ${\cal M}^{\Lie}(\Liea)$ is an epimorphic image of $\Lieq$.
 \end{enumerate}
 \end{Th}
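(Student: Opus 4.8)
The plan is to lift everything to a free presentation and reduce to a statement about ideals of free Leibniz algebras. Fix a free presentation $0 \to \Lier \to \Lief \stackrel{\rho}\to \Lieg \to 0$, so by definition ${\cal M}^{\Lie}(\Lieg) = \frac{\Lier \cap [\Lief, \Lief]_{\Lie}}{[\Lief, \Lier]_{\Lie}}$. Since $\Lieb$ is a two-sided ideal of $\Lieg$, its preimage $\Lies = \rho^{-1}(\Lieb)$ is a two-sided ideal of $\Lief$ containing $\Lier$, and the composite $\Lief \twoheadrightarrow \Lieg \twoheadrightarrow \Liea$ is a free presentation of $\Liea$ with kernel $\Lies$. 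Hence ${\cal M}^{\Lie}(\Liea) = \frac{\Lies \cap [\Lief, \Lief]_{\Lie}}{[\Lief, \Lies]_{\Lie}}$, computed from the \emph{same} free Leibniz algebra $\Lief$.

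The candidates I would propose are
\[
\Lieq = \frac{\Lies \cap [\Lief, \Lief]_{\Lie}}{[\Lief, \Lier]_{\Lie}}, \qquad \Liem = \frac{\Lier \cap [\Lief, \Lief]_{\Lie}}{[\Lief, \Lier]_{\Lie}} = {\cal M}^{\Lie}(\Lieg),
\]
where $\Liem$ is a two-sided ideal of $\Lieq$ because $[\Lief,\Lier]_{\Lie} \subseteq \Lier \cap [\Lief,\Lief]_{\Lie} \subseteq \Lies \cap [\Lief,\Lief]_{\Lie}$ and all three are two-sided ideals of $\Lief$ by Lemma \ref{Lemma3.3}; one should check $\Lieq$ is genuinely a (sub)quotient Leibniz algebra, which is routine since numerator and denominator are two-sided ideals. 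This immediately gives (b). For (c), one has an epimorphism $\Lieq \twoheadrightarrow \frac{\Lies \cap [\Lief,\Lief]_{\Lie}}{[\Lief,\Lies]_{\Lie}} = {\cal M}^{\Lie}(\Liea)$ induced by the inclusion $[\Lief,\Lier]_{\Lie} \subseteq [\Lief,\Lies]_{\Lie}$ (which holds since $\Lier \subseteq \Lies$), so ${\cal M}^{\Lie}(\Liea)$ is the quotient $\Lieq/\Liem'$ with $\Liem' = \frac{[\Lief,\Lies]_{\Lie}}{[\Lief,\Lier]_{\Lie}}$, in particular an epimorphic image of $\Lieq$.

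For (a) I would compute $\Lieq/\Liem = \frac{\Lies \cap [\Lief,\Lief]_{\Lie}}{\Lier \cap [\Lief,\Lief]_{\Lie}}$ and identify it with $[\Lieg,\Lieg]_{\Lie} \cap \Lieb$. The map $\rho$ sends $[\Lief,\Lief]_{\Lie}$ onto $[\Lieg,\Lieg]_{\Lie}$ (since $\rho$ is surjective and $\Lie$-commutators are generated by the bracket-symmetrizations, which $\rho$ respects) and sends $\Lies$ onto $\Lieb$; thus $\rho$ carries $\Lies \cap [\Lief,\Lief]_{\Lie}$ into $\Lieb \cap [\Lieg,\Lieg]_{\Lie}$, and surjectivity onto this intersection is the point that needs a small argument: given $y \in \Lieb \cap [\Lieg,\Lieg]_{\Lie}$, lift it to some $x \in [\Lief,\Lief]_{\Lie}$ (possible as $\rho([\Lief,\Lief]_{\Lie}) = [\Lieg,\Lieg]_{\Lie}$), and then $x \in \rho^{-1}(\Lieb) = \Lies$ automatically, so $x \in \Lies \cap [\Lief,\Lief]_{\Lie}$. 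The kernel of this restricted map is $\Lies \cap [\Lief,\Lief]_{\Lie} \cap \Lier = \Lier \cap [\Lief,\Lief]_{\Lie}$ (using $\Lier \subseteq \Lies$), giving the isomorphism in (a). The main obstacle I anticipate is purely bookkeeping: verifying that the various subspaces are two-sided ideals (handled by Lemma \ref{Lemma3.3}) and that the induced maps on quotients are well defined Leibniz homomorphisms; the genuinely substantive point, the surjectivity $\rho(\Lies \cap [\Lief,\Lief]_{\Lie}) = \Lieb \cap [\Lieg,\Lieg]_{\Lie}$, is the short lifting argument just described and should cause no real difficulty.
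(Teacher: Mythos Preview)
Your proposal is correct and follows essentially the same approach as the paper: the same free presentation is used, the same choices $\Lieq = \frac{\Lies \cap [\Lief,\Lief]_{\Lie}}{[\Lief,\Lier]_{\Lie}}$ and $\Liem = \frac{\Lier \cap [\Lief,\Lief]_{\Lie}}{[\Lief,\Lier]_{\Lie}}$ are made (with $\Lies = \rho^{-1}(\Lieb)$), and (b), (c) are handled identically. The only cosmetic difference is in (a): the paper reaches $\frac{\Lies \cap [\Lief,\Lief]_{\Lie}}{\Lier \cap [\Lief,\Lief]_{\Lie}} \cong [\Lieg,\Lieg]_{\Lie} \cap \Lieb$ via a chain of modular-law and isomorphism-theorem identifications, whereas you obtain it directly from the restriction of $\rho$ and the short lifting argument --- the two arguments are equivalent.
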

\begin{proof}
Let $0 \to \Lier \to \Lief \stackrel{\rho}\to \Lieg \to 0$ be a free presentation of $\Lieg$ and consider the following diagram of free presentations:
\begin{equation} \label{free present diagr}
\xymatrix{& & 0 \ar[d] & 0 \ar[ld]\\
&  & {\Lier} \ar[d]\ar[ld] \\
0 \ar[r] & {\Lies} \ \ar[r] \ar[d] & {\Lief} \ar[d]^{\rho} \ar[rd]^{\pi \circ \rho} \\
0 \ar[r]& {\Lieb} \ar[r] \ar[d]& {\Lieg} \ar[r]^{\pi} \ar[d]& {\Liea} \ar[r] \ar[dr] & 0 \\
 & 0 & 0 &  & 0
}
\end{equation}
Then $\Liea \cong  \frac{\Lieg}{\Lieb} \cong \frac{\Lief/\Lier}{\Lies/\Lier} \cong \frac{\Lief}{\Lies}$. Now set $\Liem \cong  \frac{[\Lief, \Lief]_{\Lie} \cap \Lier}{[\Lief, \Lier]_{\Lie}}$ and $\Lieq \cong  \frac{[\Lief, \Lief]_{\Lie} \cap \Lies}{[\Lief, \Lier]_{\Lie}}$. Obviously $\Liem$ is a two-sided ideal of $\Lieq$.

Thus
\begin{equation} \label{eq 1}
\begin{array}{l}
{\normalsize [\Lieg, \Lieg]_{\Lie} \cap \Lieb} \cong [\frac{\Lief}{\Lier}, \frac{\Lief}{\Lier}]_{\Lie} \cap \frac{\Lies}{\Lier} \cong \frac{[\Lief, \Lief]_{\Lie}+ \Lier}{\Lier} \cap \frac{\Lies}{\Lier} \cong \frac{\left( [\Lief, \Lief]_{\Lie}+ \Lier \right) \cap \Lies}{\Lier} \cong \frac{\left( [\Lief, \Lief]_{\Lie} \cap \Lies \right) + \Lier}{\Lier} \cong \\
\\
\frac{ [\Lief, \Lief]_{\Lie} \cap \Lies}{\Lier \cap \left( [\Lief, \Lief]_{\Lie} \cap \Lies \right)} \cong \frac{ [\Lief, \Lief]_{\Lie} \cap \Lies}{[\Lief, \Lief]_{\Lie} \cap \Lier} \cong \frac{([\Lief, \Lief]_{\Lie} \cap \Lies)/{[\Lief, \Lier]_{\Lie}}}{([\Lief, \Lief]_{\Lie} \cap \Lier)/{[\Lief, \Lier]_{\Lie}}} \cong \frac{\Lieq}{\Liem}.
\end{array}
\end{equation}

Now second statement is obvious. For the third one,  since
\begin{equation} \label{eq 2}
{\cal M}^{\Lie}(\Liea) \cong \frac{\Lies \cap [\Lief, \Lief]_{\Lie}}{[\Lief, \Lies]_{\Lie}} \cong \frac{(\Lies \cap [\Lief, \Lief]_{\Lie})/{[\Lief, \Lier]_{\Lie}}}{[\Lief, \Lies]_{\Lie}/{[\Lief, \Lier]_{\Lie}}} \cong \frac{\Lieq}{{[\Lief, \Lies]_{\Lie}}/{[\Lief, \Lier]_{\Lie}}},
 \end{equation}
 then ${\cal M}^{\Lie}(\Liea)$ is the image of $\Lieq$ under some homomorphism whose kernel is  $\frac{[\Lief, \Lies]_{\Lie}}{[\Lief, \Lier]_{\Lie}}$.
\end{proof}

\begin{Co} \label{inequality}
Let $\Lieg$ be a finite-dimensional Leibniz algebra, $\Lieb$ be a two-sided ideal of $\Lieg$, and $\Liea \cong \Lieg/\Lieb$. Then
$${\rm dim}\left( {\cal M}^{\Lie}(\Liea) \right) \leq {\rm dim} \left( {\cal M}^{\Lie}(\Lieg) \right) + {\rm dim} \left( [\Lieg, \Lieg]_{\Lie} \cap \Lieb \right)$$
\end{Co}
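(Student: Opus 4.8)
The plan is to read off the dimension inequality directly from Theorem \ref{Th 1} by a counting argument on the three isomorphisms (a)--(c) established there. First I would recall the setup: given the short exact sequence $0 \to \Lieb \to \Lieg \to \Liea \to 0$, Theorem \ref{Th 1} produces a Leibniz algebra $\Lieq$ with a two-sided ideal $\Liem$ such that $[\Lieg,\Lieg]_{\Lie} \cap \Lieb \cong \Lieq/\Liem$, $\Liem \cong {\cal M}^{\Lie}(\Lieg)$, and ${\cal M}^{\Lie}(\Liea)$ is an epimorphic image of $\Lieq$. Since $\Lieg$ is finite-dimensional, so is its free presentation data in the relevant subquotients, and hence $\Lieq$, $\Liem$, $[\Lieg,\Lieg]_{\Lie}\cap\Lieb$ and both Schur $\Lie$-multipliers are finite-dimensional vector spaces; this is the only point where finite-dimensionality is used.

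Next I would simply combine the dimension counts. From (c), ${\cal M}^{\Lie}(\Liea)$ being an epimorphic image of $\Lieq$ gives
\[
{\rm dim}\left( {\cal M}^{\Lie}(\Liea) \right) \leq {\rm dim}(\Lieq).
\]
From (a) and (b), the short exact sequence of vector spaces $0 \to \Liem \to \Lieq \to \Lieq/\Liem \to 0$ yields
\[
{\rm dim}(\Lieq) = {\rm dim}(\Liem) + {\rm dim}(\Lieq/\Liem) = {\rm dim}\left( {\cal M}^{\Lie}(\Lieg) \right) + {\rm dim}\left( [\Lieg,\Lieg]_{\Lie} \cap \Lieb \right).
\]
Chaining the two displays gives the desired inequality. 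I would write this as a short two-line proof, perhaps folding it into a single chain of (in)equalities.

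There is no real obstacle here — the corollary is a formal consequence of the theorem, and the only thing to be slightly careful about is that all the spaces in sight are genuinely finite-dimensional so that additivity of dimension along the short exact sequence $0 \to \Liem \to \Lieq \to \Lieq/\Liem \to 0$ and monotonicity under surjections both apply. Since $\Lieg$ is finite-dimensional and $[\Lieg,\Lieg]_{\Lie}\cap\Lieb$ is a subspace of $\Lieg$ while ${\cal M}^{\Lie}(\Lieg)$ is a subquotient of $\Lieg$ (being a Baer invariant, computable from any free presentation but finite-dimensional because $HL_2^{\Lie}(\Lieg)$ is), this is immediate, though one could also argue purely internally: $\Lieq$ is sandwiched as a subquotient built from $[\Lieg,\Lieg]_{\Lie}\cap\Lieb$ and ${\cal M}^{\Lie}(\Lieg)$, both finite-dimensional, so $\dim \Lieq$ is finite and equals the sum.
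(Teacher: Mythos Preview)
Your argument is correct and matches the paper's proof essentially line for line: both use the short exact sequence $0 \to \Liem \to \Lieq \to [\Lieg,\Lieg]_{\Lie}\cap\Lieb \to 0$ from part (a)--(b) of Theorem~\ref{Th 1} to compute $\dim(\Lieq)$, and the surjection in part (c) to bound $\dim\bigl({\cal M}^{\Lie}(\Liea)\bigr)$ above by $\dim(\Lieq)$. One small slip: ${\cal M}^{\Lie}(\Lieg)$ is not literally a subquotient of $\Lieg$ but of the free algebra $\Lief$; your parenthetical appeal to finite-dimensionality of $HL_2^{\Lie}(\Lieg)$ is the right justification.
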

\begin{proof}
From equation (\ref{eq 1}) we have the short exact sequence of vector spaces
$$0 \to \Liem \to \Lieq \to [\Lieg, \Lieg]_{\Lie} \cap \Lieb \to 0$$
hence ${\rm dim}(\Lieq) = {\rm dim}(\Liem) + {\rm dim}([\Lieg, \Lieg]_{\Lie} \cap \Lieb) = {\rm dim}({\cal M}^{\Lie}(\Lieg)) + {\rm dim}([\Lieg, \Lieg]_{\Lie} \cap \Lieb)$.

On the other hand, equation (\ref{eq 2}) implies that ${\rm dim}({\cal M}^{\Lie}(\Liea)) \leq {\rm dim}(\Lieq)$, which ends the proof.
\end{proof}

%\begin{Le}
%Let $\Lieg$ be a finite-dimensional Leibniz algebra and $0 \to \Lier \to \Lief \stackrel{\rho}\to \Lieg \to 0$ be a free presentation of $\Lieg$. Let $\Lieb$ be a two-sided \Lie-central ideal of $\Lieg$ (i.e. $\Lieb \subseteq Z_{\Lie}(\Lieg)$ and consider the diagram of free presentations (\ref{free present diagr}). Then $\frac{[\Lief, \Lies]_{\Lie}}{[\Lief, \Lier]_{\Lie} + [\Lies, \Lies]_{\Lie}}$ is a homomorphic image of $\frac{\Liea}{[\Liea, \Liea]_{\Lie}} \otimes \frac{\Lieb}{[\Lieb, \Lieb]_{\Lie}} = \Liea_{\Lie} \otimes \Lieb_{\Lie}$.
%\end{Le}

\begin{Th}
Let $\Lieg$ be a finite-dimensional Leibniz algebra and $\Lieb$ be a  \Lie-central two-sided ideal of $\Lieg$ (i.e. $\Lieb \subseteq Z_{\Lie}(\Lieg)$ such that $\Liea \cong \Lieg/\Lieb$. Then
$${\rm dim}\left( {\cal M}^{\Lie}(\Lieg) \right) +  {\rm dim}\left( [\Lieg, \Lieg]_{\Lie} \cap \Lieb \right) \leq
{\rm dim}\left( {\cal M}^{\Lie}(\Liea) \right) +  {\rm dim}\left( \Lieb \otimes \Lieg_{\Lie} \right)$$
\end{Th}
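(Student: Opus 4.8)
The plan is to apply the six-term exact sequence (\ref{six-term}) to the extension $0\to\Lieb\to\Lieg\to\Liea\to0$ and then merely count dimensions. This extension is $\Lie$-central since $\Lieb\subseteq Z_{\Lie}(\Lieg)$ by hypothesis, so (\ref{six-term}) applies with middle term $\Lieg$, quotient $\Liea$ and $\Lien=\Lieb$; using $HL_2^{\Lie}=\mathcal{M}^{\Lie}$ and $HL_1^{\Lie}=(-)_{\Lie}$ it reads
\[
\Lieb\otimes\Lieg_{\Lie}\stackrel{\varphi}{\longrightarrow}\mathcal{M}^{\Lie}(\Lieg)\stackrel{\psi}{\longrightarrow}\mathcal{M}^{\Lie}(\Liea)\stackrel{\theta}{\longrightarrow}\Lieb\stackrel{\gamma}{\longrightarrow}\Lieg_{\Lie}\longrightarrow\Liea_{\Lie}\longrightarrow0 .
\]
As $\Lieg$ is finite-dimensional, so are $\Lieb$, $\Lieg_{\Lie}$, $\Lieb\otimes\Lieg_{\Lie}$ and, as already used in Corollary \ref{inequality}, the Schur $\Lie$-multipliers $\mathcal{M}^{\Lie}(\Lieg)$ and $\mathcal{M}^{\Lie}(\Liea)$; hence all ranks below are finite.

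First I would read off three facts. From exactness at $\mathcal{M}^{\Lie}(\Lieg)$: $\dim\mathcal{M}^{\Lie}(\Lieg)=\dim\Im\varphi+\dim\Im\psi$, with $\dim\Im\varphi\le\dim(\Lieb\otimes\Lieg_{\Lie})$. From exactness at $\mathcal{M}^{\Lie}(\Liea)$: $\dim\Im\psi=\dim\mathcal{M}^{\Lie}(\Liea)-\dim\Im\theta$. From exactness at $\Lieb$: $\dim\Im\theta=\dim\ker\gamma$. The only remaining point is to identify $\ker\gamma$. The map $\gamma$ is induced by $\Lieb\hookrightarrow\Lieg\twoheadrightarrow\Lieg_{\Lie}=\Lieg/\Lieg^{\rm ann}$, so $\ker\gamma=\Lieb\cap\Lieg^{\rm ann}$; and since $\tfrac12\in\mathbb{K}$ one has $\Lieg^{\rm ann}=[\Lieg,\Lieg]_{\Lie}$ (the inclusion $[\Lieg,\Lieg]_{\Lie}\subseteq\Lieg^{\rm ann}$ from $[x,y]+[y,x]=[x+y,x+y]-[x,x]-[y,y]$, the reverse from $[x,x]=\tfrac12([x,x]+[x,x])$), whence $\ker\gamma=[\Lieg,\Lieg]_{\Lie}\cap\Lieb$. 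Combining the three facts,
\[
\dim\mathcal{M}^{\Lie}(\Lieg)+\dim\bigl([\Lieg,\Lieg]_{\Lie}\cap\Lieb\bigr)=\dim\Im\varphi+\dim\mathcal{M}^{\Lie}(\Liea)\ \le\ \dim(\Lieb\otimes\Lieg_{\Lie})+\dim\mathcal{M}^{\Lie}(\Liea),
\]
which is exactly the claimed inequality.

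Apart from the pure rank--nullity bookkeeping, the proof rests on three auxiliary ingredients: the explicit description of the connecting map $\gamma$ (hence of $\ker\gamma$), which I would extract from the construction of (\ref{six-term}) in \cite{CKh} just as one computes the map $\Lien/[\Lieg,\Lien]\to\Lieg^{\mathrm{ab}}$ in the classical five-term sequence; the finite-dimensionality of $\mathcal{M}^{\Lie}(-)$ on finite-dimensional Leibniz algebras; and the elementary identity $\Lieg^{\rm ann}=[\Lieg,\Lieg]_{\Lie}$. I expect the precise form of $\gamma$ to be the only step needing real care. A more hands-on alternative, staying with the machinery of Theorem \ref{Th 1}, would be to realize $[\Lief,\Lies]_{\Lie}/[\Lief,\Lier]_{\Lie}$ as an epimorphic image of $\Lieb\otimes\Lieg_{\Lie}$; but that seems to hinge on the less transparent containment $[\Lief^{\rm ann},\Lies]_{\Lie}\subseteq[\Lief,\Lier]_{\Lie}$, so I would prefer the homological route above.
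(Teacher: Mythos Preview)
Your proof is correct and rests on the same six-term exact sequence the paper invokes. The only difference in packaging is that the paper carries out the dimension count via the free-presentation object $\Lieq$ of Theorem~\ref{Th 1} and equation~(\ref{eq 2}), together with the epimorphism $\sigma:\Lieb\otimes\Lieg_{\Lie}\twoheadrightarrow[\Lief,\Lies]_{\Lie}/[\Lief,\Lier]_{\Lie}$ imported from the proof of \cite[Proposition~2]{CKh}---this is precisely the ``hands-on alternative'' you sketch at the end, and the containment you flagged as less transparent is already handled there.
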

\begin{proof}
From the proof of Proposition 2 in \cite{CKh} there is the exact sequence
$$\Lieb \otimes \Lieg_{\Lie} \to {\cal M}^{\Lie}(\Lieg) \to {\cal M}^{\Lie}(\Liea) \to \Lieb \to \Lieg_{\Lie} \to \Liea_{\rm Lie} \to 0$$
and, having in mind diagram (\ref{free present diagr}), there is an epimorphism $\sigma : \Lieb \otimes \Lieg_{\Lie} \twoheadrightarrow \frac{[\Lief, \Lies]_{\Lie}}{[\Lief, \Lier]_{\Lie}}$.

From the proof of Corollary \ref{inequality} and by equation (\ref{eq 2}) we have:
\[
\begin{array}{l}
{\rm dim}\left( {\cal M}^{\Lie}(\Lieg) \right) + {\rm dim}\left( [\Lieg, \Lieg]_{\Lie} \cap \Lieb \right) = {\rm dim}\left( \Lieq \right) =\\
 {\rm dim}\left( {\cal M}^{\Lie}(\Liea) \right) + {\rm dim}\left( \frac{[\Lief, \Lies]_{\Lie}}{[\Lief, \Lier]_{\Lie}} \right) \leq  {\rm dim}\left( {\cal M}^{\Lie}(\Liea) \right) +  {\rm dim}\left( \Lieb \otimes \Lieg_{\Lie} \right)
 \end{array}
 \]
\end{proof}

\begin{Th} \label{sequence}
Let $0 \to \Lier \to \Lief \stackrel{\rho}\to \Lieg \to 0$ be a free presentation of a Leibniz algebra $\Lieg$. Let $\Lien$ be a two-sided ideal of $\Lieg$ and $\Lies$ be a two-sided ideal of $\Lief$  such that $\Lien \cong \frac{\Lies + \Lier}{\Lier}$. Then the following sequence is exact and natural
\begin{equation} \label{exact seq}
0 \to \frac{\Lier \cap [\Lief, \Lies]_{\Lie}}{[\Lief, \Lier]_{\Lie} \cap [\Lief, \Lies]_{\Lie}} \stackrel{\pi}\to {\cal M}^{\Lie}(\Lieg) \stackrel{\sigma}\to {\cal M}^{\Lie} \left(\frac{\Lieg}{\Lien} \right) \stackrel{\tau}\to \frac{\Lien \cap [\Lieg, \Lieg]_{\Lie}}{[\Lieg, \Lien]_{\Lie}} \to 0
\end{equation}
\end{Th}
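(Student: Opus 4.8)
The plan is to transport everything to the free Leibniz algebra $\Lief$ and to deduce the sequence from the isomorphism theorems together with the modular law $A\cap(B+C)=B+(A\cap C)$ (valid for subspaces with $B\subseteq A$). First I would note that $\Lies+\Lier$ is again a two-sided ideal of $\Lief$ and that $0\to\Lies+\Lier\to\Lief\to\Lieg/\Lien\to0$ is a free presentation of $\Lieg/\Lien$; since ${\cal M}^{\Lie}$ is a Baer invariant this gives ${\cal M}^{\Lie}(\Lieg/\Lien)\cong\frac{(\Lies+\Lier)\cap[\Lief,\Lief]_{\Lie}}{[\Lief,\Lies+\Lier]_{\Lie}}$. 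Using $[\Lieg,\Lieg]_{\Lie}=\frac{[\Lief,\Lief]_{\Lie}+\Lier}{\Lier}$, $\Lien=\frac{\Lies+\Lier}{\Lier}$, the additivity $[\Lief,\Lies+\Lier]_{\Lie}=[\Lief,\Lies]_{\Lie}+[\Lief,\Lier]_{\Lie}$, and the containments $[\Lief,\Lier]_{\Lie}\subseteq\Lier\cap[\Lief,\Lief]_{\Lie}$ and $[\Lief,\Lies]_{\Lie}\subseteq\Lies\cap[\Lief,\Lief]_{\Lie}$ (Lemma \ref{Lemma3.3}), one obtains $[\Lieg,\Lien]_{\Lie}=\frac{[\Lief,\Lies]_{\Lie}+\Lier}{\Lier}$ and hence $\frac{\Lien\cap[\Lieg,\Lieg]_{\Lie}}{[\Lieg,\Lien]_{\Lie}}\cong\frac{(\Lies+\Lier)\cap([\Lief,\Lief]_{\Lie}+\Lier)}{[\Lief,\Lies]_{\Lie}+\Lier}$. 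All four terms of (\ref{exact seq}) are then explicit subquotients of $\Lief$.

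Next I would define $\pi$, $\sigma$, $\tau$ as the maps induced on these subquotients by the evident inclusions and by the projection $\Lief\twoheadrightarrow\Lieg$: $\pi$ from $\Lier\cap[\Lief,\Lies]_{\Lie}\hookrightarrow\Lier\cap[\Lief,\Lief]_{\Lie}$, $\sigma$ from $\Lier\cap[\Lief,\Lief]_{\Lie}\hookrightarrow(\Lies+\Lier)\cap[\Lief,\Lief]_{\Lie}$, and $\tau$ from $x\mapsto x+\Lier$ on $(\Lies+\Lier)\cap[\Lief,\Lief]_{\Lie}$ followed by the quotient map. Each is well defined on the indicated subquotients precisely because of the three containments above (for instance $[\Lief,\Lies+\Lier]_{\Lie}\subseteq[\Lief,\Lies]_{\Lie}+\Lier$ makes $\tau$ kill the relevant denominator), and $\tau$ is surjective because every element of $(\Lies+\Lier)\cap([\Lief,\Lief]_{\Lie}+\Lier)$ is congruent modulo $\Lier$ to an element of $(\Lies+\Lier)\cap[\Lief,\Lief]_{\Lie}$.

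Then exactness is verified spot by spot by writing image and kernel as subspaces of $\Lief$ and matching them via the modular law and the second isomorphism theorem. Injectivity of $\pi$ reduces to $(\Lier\cap[\Lief,\Lies]_{\Lie})\cap[\Lief,\Lier]_{\Lie}=[\Lief,\Lies]_{\Lie}\cap[\Lief,\Lier]_{\Lie}$ (because $[\Lief,\Lier]_{\Lie}\subseteq\Lier$); exactness at ${\cal M}^{\Lie}(\Lieg)$ reduces to $\ker\sigma=\frac{[\Lief,\Lier]_{\Lie}+(\Lier\cap[\Lief,\Lies]_{\Lie})}{[\Lief,\Lier]_{\Lie}}=\im\pi$; exactness at ${\cal M}^{\Lie}(\Lieg/\Lien)$ reduces to the identity $(\Lier\cap[\Lief,\Lief]_{\Lie})+[\Lief,\Lies+\Lier]_{\Lie}=[\Lief,\Lies]_{\Lie}+(\Lier\cap[\Lief,\Lief]_{\Lie})$ together with the computation $\ker\tau=\frac{[\Lief,\Lies]_{\Lie}+(\Lier\cap[\Lief,\Lief]_{\Lie})}{[\Lief,\Lies+\Lier]_{\Lie}}$; and surjectivity of $\tau$ was already noted. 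I expect the bookkeeping at the third spot to need most care, since there the modular law must be applied inside $(\Lies+\Lier)\cap[\Lief,\Lief]_{\Lie}$ rather than inside $\Lief$ itself, and one must keep the two a priori different denominators $[\Lief,\Lies]_{\Lie}+\Lier$ and $[\Lief,\Lies+\Lier]_{\Lie}$ straight.

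Finally, for naturality I would work in the category of pairs $(\Lieg,\Lien)$: given a morphism $(\Lieg,\Lien)\to(\Lieg',\Lien')$, projectivity of free Leibniz algebras yields a lift $\tilde\phi:\Lief\to\Lief'$ with $\tilde\phi(\Lier)\subseteq\Lier'$, and then $\tilde\phi(\Lies)\subseteq\Lies'+\Lier'$ since $\rho'\tilde\phi=\phi\rho$ carries $\Lies$ into $\Lien'$. This $\tilde\phi$ induces maps between all four subquotients commuting with $\pi,\sigma,\tau$, and a routine check—any two such lifts differ by a map into $\Lier'$, whose contribution lands in $[\Lief',\Lier']_{\Lie}$ after symmetrizing brackets—shows these induced maps are independent of the lift. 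Hence (\ref{exact seq}) is natural.
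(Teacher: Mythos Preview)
Your proposal is correct and follows essentially the same route as the paper: both transport all four terms to explicit subquotients of $\Lief$ via the free presentation $0\to\Lies+\Lier\to\Lief\to\Lieg/\Lien\to0$, define $\pi,\sigma,\tau$ as the maps induced by the obvious inclusions, and verify exactness spot by spot. Your presentation is slightly more streamlined in that you invoke the modular law and the additivity $[\Lief,\Lies+\Lier]_{\Lie}=[\Lief,\Lies]_{\Lie}+[\Lief,\Lier]_{\Lie}$ explicitly to organize the bookkeeping, whereas the paper argues by direct element-chasing at each step; and you sketch the naturality argument (independence of the lift $\tilde\phi$), which the paper asserts but does not spell out. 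These are cosmetic differences, not a different strategy.
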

\begin{proof}
From the following commutative diagram of free presentations
\[
\xymatrix{
 & 0 \ar[d] && 0 \ar[d] & 0 \ar[d] &  \\
0 \ar[r] & \bullet \ar[rr] \ar[dd] & & {\Lier} \ar@{>->}[ld] \ar[dd] \ar[r] & \bullet \ar[dd] \ar[r] & 0\\
& & \Lies+\Lier \ar@{>>}[ldd] \ar@{>->}[rd]& & & \\
0 \ar[r] & \Lies \ar@{>->}[ru] \ar[rr] \ar[d] & &\Lief \ar@{>>}[rd] \ar[r] \ar[d] & \bullet \ar[r] \ar[d] & 0\\
0 \ar[r] & \Lien \ar[rr]\ar[d] && \Lieg \ar[r] \ar[d] & \Lieg/\Lien \ar[r] \ar[d]& 0\\
 & 0 &  & 0&0 &
}
\]
we follow that ${\cal M}^{\Lie}(\Lieg) \cong  \frac{\Lier \cap [\Lief, \Lief]_{\Lie}}{[\Lief, \Lier]_{\Lie}}$,  ${\cal M}^{\Lie} \left( \frac{\Lieg}{\Lien} \right) \cong  \frac{(\Lies + \Lier) \cap [\Lief, \Lief]_{\Lie}}{[\Lief, \Lies + \Lier]_{\Lie}}$, since $\frac{\Lieg}{\Lien} \cong \frac{\Lief/\Lier}{(\Lies+\Lier)/\Lier} \cong \frac{\Lief}{\Lies + \Lier}$.

On the other hand, we can rewrite
$$\frac{\Lien \cap [\Lieg,\Lieg]_{\Lie}}{[\Lieg,\Lien]_{\Lie}} \cong \frac{\frac{\Lies + \Lier}{\Lier} \cap [\frac{\Lief}{\Lier},\frac{\Lief}{\Lier}]_{\Lie}}{[\frac{\Lief}{\Lier}, \frac{\Lies + \Lier}{\Lier}]_{\Lie}} \cong
  \frac{\frac{\Lies + \Lier}{\Lier} \cap \frac{[\Lief, \Lief]_{\Lie}}{\Lier}}{\frac{[\Lief, \Lies + \Lier]_{\Lie}}{\Lier}} \cong
\frac{\frac{\Lies + \Lier}{\Lier} \cap \frac{[\Lief, \Lief]_{\Lie}+ \Lier}{\Lier}}{\frac{[\Lief, \Lies + \Lier]_{\Lie}}{\Lier}}  \cong
\frac{\left( \Lies + \Lier \right) \cap \left( [\Lief, \Lief]_{\Lie} + \Lier \right)}{[\Lief, \Lies]_{\Lie} + \Lier}$$
Then it suffices to show the following sequence is exact:
$$0 \to \frac{\Lier \cap [\Lief, \Lies]_{\Lie}}{[\Lief, \Lier]_{\Lie} \cap [\Lief, \Lies]_{\Lie}} \stackrel{\pi} \to
\frac{\Lier \cap [\Lief, \Lief]_{\Lie}}{[\Lief, \Lier]_{\Lie}} \stackrel{\sigma} \to \frac{ \left( \Lies + \Lier \right) \cap [\Lief, \Lief]_{\Lie}}{[\Lief, \Lies + \Lier]_{\Lie}} \stackrel{\tau} \to \frac{ \left( \Lies + \Lier \right) \cap \left( [\Lief, \Lief]_{\Lie} + \Lier \right)}{[\Lief, \Lies]_{\Lie} + \Lier} \to 0$$

Define $\pi :  \frac{\Lier \cap [\Lief, \Lies]_{\Lie}}{[\Lief, \Lier]_{\Lie} \cap [\Lief, \Lies]_{\Lie}}  \to
\frac{\Lier \cap [\Lief, \Lief]_{\Lie}}{[\Lief, \Lier]_{\Lie}}$ by $\pi(x+ ([\Lief, \Lier]_{\Lie} \cap [\Lief, \Lies]_{\Lie})) = x + [\Lief, \Lier]_{\Lie}$. It is easy to check that $\pi$ is an injective well-defined linear map.
\medskip

Define $\sigma : \frac{\Lier \cap [\Lief, \Lief]_{\Lie}}{[\Lief, \Lier]_{\Lie}}  \to \frac{ \left( \Lies + \Lier \right) \cap [\Lief, \Lief]_{\Lie}}{[\Lief, \Lies + \Lier]_{\Lie}}$ by $\sigma(x + [\Lief, \Lier]_{\Lie}) = x + [\Lief, \Lies + \Lier]_{\Lie}$. Obviously
$\sigma$ is a well-defined linear map and $\sigma \circ \pi = 0$, consequently $\im( \pi) \subseteq \ker(\sigma)$.

On the other hand, given $x + [\Lief, \Lier]_{\Lie} \in \ker(\sigma)$, then $ x \in [\Lief, \Lies + \Lier]_{\Lie}$. Hence $x \in \Lier \cap [\Lief, \Lief]_{\Lie} \cap [\Lief, \Lies + \Lier]_{\Lie} = \Lier \cap [\Lief, \Lies + \Lier]_{\Lie}$. Thus $x + [\Lief, \Lier]_{\Lie} = [f,s+r]+[s+r,f] + [\Lief, \Lier]_{\Lie} \in  \frac{[\Lief, \Lies]_{\Lie}}{[\Lief, \Lier]_{\Lie}}$. Summarizing, $x + [\Lief, \Lier]_{\Lie} \in  \frac{\Lier \cap [\Lief, \Lies + \Lier]_{\Lie} \cap [\Lief, \Lies]_{\Lie}}{[\Lief, \Lier]_{\Lie}} = \frac{\Lier \cap  [\Lief, \Lies]_{\Lie}}{[\Lief, \Lier]_{\Lie}}$.

Then $x + \left( [\Lief, \Lier]_{\Lie} \cap [\Lief, \Lies]_{\Lie} \right) \in  \frac{\Lier \cap  [\Lief, \Lies]_{\Lie}}{[\Lief, \Lier]_{\Lie} \cap [\Lief, \Lies]_{\Lie}}$ satisfies that $\pi \left( x + \left( [\Lief, \Lier]_{\Lie} \cap [\Lief, \Lies]_{\Lie} \right) \right) = x + [\Lief, \Lier]_{\Lie}$, which implies that $\ker(\sigma) \subseteq \im( \pi)$.
\medskip

Define $\tau : \frac{ \left( \Lies + \Lier \right) \cap [\Lief, \Lief]_{\Lie}}{[\Lief, \Lies + \Lier]_{\Lie}} \to \frac{ \left( \Lies + \Lier \right) \cap \left( [\Lief, \Lief]_{\Lie} + \Lier \right)}{[\Lief, \Lies]_{\Lie} + \Lier}$ by $\tau \left(x+[\Lief, \Lies + \Lier]_{\Lie} \right) = x+([\Lief, \Lies]_{\Lie} + \Lier)$. $\tau$ is a well-defined linear map such that $\tau \circ \sigma =0$, then $\im(\sigma) \subseteq \ker(\tau)$.

For the converse, let $x + [\Lief, \Lies + \Lier]_{\Lie} \in \frac{ \left( \Lies + \Lier \right) \cap [\Lief, \Lief]_{\Lie}}{[\Lief, \Lies + \Lier]_{\Lie}}$ such that $\tau(x + [\Lief, \Lies + \Lier]_{\Lie}) = x + ([\Lief, \Lies]_{\Lie} + \Lier ) = \overline{0}$.

We need to prove that $x + [\Lief, \Lies + \Lier]_{\Lie} \in \im(\sigma)$. This occurs only if $x + [\Lief, \Lies + \Lier]_{\Lie}$ $ \in \frac{\Lier \cap [\Lief, \Lief]_{\Lie}}{[\Lief, \Lies + \Lier]_{\Lie}}$, so it  suffices to show that $x + [\Lief, \Lies + \Lier]_{\Lie} = r + [\Lief, \Lies + \Lier]_{\Lie}$ for some $r \in \Lier$.

Since $x \in [\Lief, \Lies]_{\Lie} + \Lier$, then $ x -r \in [\Lief, \Lies]_{\Lie}$ for some $r \in \Lier$. Thus $x-r + [\Lief, \Lies + \Lier]_{\Lie} = \overline{0}$, i.e. $x+ [\Lief, \Lies + \Lier]_{\Lie}  = r + [\Lief, \Lies + \Lier]_{\Lie}$. Consequently $x \in \frac{\Lier \cap  [\Lief, \Lief]_{\Lie}}{[\Lief, \Lies + \Lier]_{\Lie}}$.

Finally, $\tau$ is surjective. Namely, for $x + ([\Lief, \Lies]_{\Lie}  + \Lier) \in  \frac{ \left( \Lies + \Lier \right) \cap \left( [\Lief, \Lief]_{\Lie} + \Lier \right)}{[\Lief, \Lies]_{\Lie} + \Lier}$, we have that $x \in  \Lies + \Lier$ and $x \in  \frac{[\Lief, \Lief]_{\Lie} + \Lier}{[\Lief, \Lies]_{\Lie} + \Lier} \cong \frac{[\Lief, \Lief]_{\Lie}}{[\Lief, \Lies]_{\Lie} + \Lier}$. Hence $x \in (\Lies + \Lier) \cap [\Lief, \Lief]_{\Lie}$ and $\tau(x+ [\Lief, \Lies+ \Lier]_{\Lie}) = x  + ([\Lief, \Lies]_{\Lie} + \Lier)$.
\end{proof}

\begin{Co}
Let $\Lieg$ be a \Lie-nilpotent Leibniz algebra of class $k \geq 2$, then the following sequence is exact and natural:
$$0 \to \frac{\Lief^{[k+1]}}{[\Lief, \Lier]_{\Lie} \cap \Lief^{[k+1]}} \to {\cal M}^{\Lie} \left(\Lieg \right) \to {\cal M}^{\Lie} \left(\frac{\Lieg}{\Lieg^{[k]}} \right) \to \Lieg^{[k]} \to 0$$
\end{Co}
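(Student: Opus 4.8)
The plan is to apply Theorem \ref{sequence} to the two-sided ideal $\Lien = \Lieg^{[k]}$, the last nonzero term of the lower $\Lie$-central series of $\Lieg$. Fix a free presentation $0 \to \Lier \to \Lief \stackrel{\rho}{\to} \Lieg \to 0$ and take $\Lies = \Lief^{[k]}$. By Lemma \ref{Lemma3.3} together with an induction on $i$ (starting from $\Lief^{[1]} = \Lief$), each $\Lief^{[i]}$ is a two-sided ideal of $\Lief$; and since $\rho$ is surjective it carries the $\Lie$-commutator of two two-sided ideals onto the $\Lie$-commutator of their images, so another easy induction gives $\rho(\Lief^{[i]}) = \Lieg^{[i]}$ for all $i \geq 1$. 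In particular $(\Lief^{[k]} + \Lier)/\Lier \cong \rho(\Lief^{[k]}) = \Lieg^{[k]} = \Lien$, so the hypotheses of Theorem \ref{sequence} are satisfied and (\ref{exact seq}) yields an exact and natural sequence
$$0 \to \frac{\Lier \cap [\Lief, \Lief^{[k]}]_{\Lie}}{[\Lief, \Lier]_{\Lie} \cap [\Lief, \Lief^{[k]}]_{\Lie}} \to {\cal M}^{\Lie}(\Lieg) \to {\cal M}^{\Lie}\!\left(\frac{\Lieg}{\Lieg^{[k]}}\right) \to \frac{\Lieg^{[k]} \cap [\Lieg, \Lieg]_{\Lie}}{[\Lieg, \Lieg^{[k]}]_{\Lie}} \to 0.$$

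Next I would simplify the two outer terms, using that $\Lieg$ has $\Lie$-nilpotency class exactly $k$. Since the $\Lie$-commutator is symmetric in its two arguments, $[\Lief, \Lief^{[k]}]_{\Lie} = [\Lief^{[k]}, \Lief]_{\Lie} = \Lief^{[k+1]}$. From $\rho(\Lief^{[k+1]}) = \Lieg^{[k+1]} = 0$ we get $\Lief^{[k+1]} \subseteq \Ker(\rho) = \Lier$, hence $\Lier \cap \Lief^{[k+1]} = \Lief^{[k+1]}$ and the leftmost term becomes $\Lief^{[k+1]}/\bigl([\Lief,\Lier]_{\Lie} \cap \Lief^{[k+1]}\bigr)$. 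For the rightmost term, $k \geq 2$ gives $\Lieg^{[k]} \subseteq \Lieg^{[2]} = [\Lieg,\Lieg]_{\Lie}$, so $\Lieg^{[k]} \cap [\Lieg,\Lieg]_{\Lie} = \Lieg^{[k]}$; and $[\Lieg, \Lieg^{[k]}]_{\Lie} = \Lieg^{[k+1]} = 0$, so that term collapses to $\Lieg^{[k]}$. Substituting these identifications into the displayed sequence gives precisely the asserted four-term exact sequence, and its naturality is inherited from the naturality asserted in Theorem \ref{sequence}.

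The argument is in essence a substitution into (\ref{exact seq}), so I do not anticipate a genuine obstacle beyond bookkeeping; the only step that needs a (short) justification is the compatibility $\rho(\Lief^{[i]}) = \Lieg^{[i]}$ of the lower $\Lie$-central series with the defining surjection, which is also the place where the class hypothesis $\Lieg^{[k+1]} = 0$ enters — it is what forces $\Lief^{[k+1]} \subseteq \Lier$ and thereby trims the leftmost numerator from $\Lier \cap \Lief^{[k+1]}$ down to $\Lief^{[k+1]}$.
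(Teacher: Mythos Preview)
Your proof is correct and follows essentially the same approach as the paper: specialize Theorem \ref{sequence} to $\Lien = \Lieg^{[k]}$ and $\Lies = \Lief^{[k]}$, then use $\Lief^{[k+1]} \subseteq \Lier$ and $\Lieg^{[k+1]} = 0$ to simplify the outer terms. Your write-up is in fact more careful than the paper's, which omits the verifications that $\rho(\Lief^{[i]}) = \Lieg^{[i]}$ and that the rightmost quotient collapses to $\Lieg^{[k]}$.
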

\begin{proof}
Take $\Lien = \Lieg^{[k]}$ and $\Lies = \Lief^{[k]}$ in Theorem \ref{sequence}. Then $\Lien = \Lieg^{[k]} \cong \frac{\Lief^{[k]}}{\Lier} \cong \frac{\Lief^{[k]}+\Lier}{\Lier} = \frac{\Lies+\Lier}{\Lier}$ and $[\Lief, \Lies]_{\Lie} = [\Lief, \Lief^{[k]}]_{\Lie} = \Lief^{[k+1]} \subseteq \Lier$. Now exact sequence (\ref{exact seq}) concludes the proof.
\end{proof}

\begin{Co}
Let $\Lien$ be a two-sided ideal of a  finite-dimensional Leibniz algebra $\Lieg$. Then
$${\rm dim} \left( {\cal M}^{\rm Lie} \left( \frac{\Lieg}{\Lien} \right) \right) +  {\rm dim} \left( \frac{\Lier \cap [\Lief, \Lies]_{\Lie}}{[\Lief, \Lier]_{\Lie} \cap [\Lief, \Lies]_{\Lie}}  \right) = {\rm dim} \left( {\cal M}^{\rm Lie} \left( \Lieg \right) \right) + {\rm dim} \left( \frac{\Lien \cap [\Lieg, \Lieg]_{\Lie}}{[\Lieg, \Lien]_{\Lie}}  \right)$$
\end{Co}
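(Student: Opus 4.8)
The plan is to obtain this dimension formula as an immediate consequence of the four-term exact sequence (\ref{exact seq}) in Theorem \ref{sequence}, using only the additivity of dimension along exact sequences of finite-dimensional vector spaces. First I would invoke Theorem \ref{sequence}: since $\Lieg$ is finite-dimensional, we may choose a free presentation $0 \to \Lier \to \Lief \stackrel{\rho}\to \Lieg \to 0$ and a two-sided ideal $\Lies$ of $\Lief$ with $\Lien \cong \frac{\Lies + \Lier}{\Lier}$, and then (\ref{exact seq}) reads
\[
0 \to \frac{\Lier \cap [\Lief, \Lies]_{\Lie}}{[\Lief, \Lier]_{\Lie} \cap [\Lief, \Lies]_{\Lie}} \stackrel{\pi}\to {\cal M}^{\Lie}(\Lieg) \stackrel{\sigma}\to {\cal M}^{\Lie}\!\left(\tfrac{\Lieg}{\Lien}\right) \stackrel{\tau}\to \frac{\Lien \cap [\Lieg, \Lieg]_{\Lie}}{[\Lieg, \Lien]_{\Lie}} \to 0 .
\]

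Next I would observe that each of the four terms is a finite-dimensional vector space: ${\cal M}^{\Lie}(\Lieg)$ and ${\cal M}^{\Lie}(\Lieg/\Lien)$ are subquotients of the finite-dimensional algebras occurring in suitable presentations (indeed the Schur $\Lie$-multiplier of a finite-dimensional Leibniz algebra is finite-dimensional, as already used implicitly in Corollary \ref{inequality}), the rightmost term is a subquotient of the finite-dimensional $\Lieg$, and the leftmost term is then forced to be finite-dimensional by exactness. For an exact sequence $0 \to A \to B \to C \to D \to 0$ of finite-dimensional vector spaces one has $\dim A - \dim B + \dim C - \dim D = 0$, equivalently $\dim B + \dim D = \dim A + \dim C$. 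Applying this with $A = \frac{\Lier \cap [\Lief, \Lies]_{\Lie}}{[\Lief, \Lier]_{\Lie} \cap [\Lief, \Lies]_{\Lie}}$, $B = {\cal M}^{\rm Lie}(\Lieg)$, $C = {\cal M}^{\rm Lie}(\Lieg/\Lien)$ and $D = \frac{\Lien \cap [\Lieg, \Lieg]_{\Lie}}{[\Lieg, \Lien]_{\Lie}}$ yields
\[
{\rm dim}\left( {\cal M}^{\rm Lie}\!\left(\tfrac{\Lieg}{\Lien}\right)\right) + {\rm dim}\left( \tfrac{\Lier \cap [\Lief, \Lies]_{\Lie}}{[\Lief, \Lier]_{\Lie} \cap [\Lief, \Lies]_{\Lie}} \right) = {\rm dim}\left( {\cal M}^{\rm Lie}(\Lieg)\right) + {\rm dim}\left( \tfrac{\Lien \cap [\Lieg, \Lieg]_{\Lie}}{[\Lieg, \Lien]_{\Lie}} \right),
\]
which is exactly the claimed identity.

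There is essentially no obstacle here beyond the bookkeeping: the only point worth a sentence is the finite-dimensionality of the Schur $\Lie$-multipliers, which follows because $\Lieg$ finite-dimensional admits a presentation with $\Lief$ of finite type in each relevant degree, or more directly from the four-term sequence itself together with finite-dimensionality of the other three terms. The alternating-sum-of-dimensions identity for a four-term exact sequence is standard (split it as $0 \to A \to B \to {\rm im}\,\sigma \to 0$ and $0 \to {\rm im}\,\sigma \to C \to D \to 0$, add the two dimension equalities), so the proof is a two-line application of Theorem \ref{sequence}.
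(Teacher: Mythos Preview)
Your proof is correct and follows the same approach as the paper: both derive the identity directly from the four-term exact sequence (\ref{exact seq}) of Theorem \ref{sequence} by elementary dimension counting. The paper splits the sequence at $\im(\sigma)$ and writes $\dim\big({\cal M}^{\Lie}(\Lieg/\Lien)\big)=\dim(\im\sigma)+\dim\big(\tfrac{\Lien\cap[\Lieg,\Lieg]_{\Lie}}{[\Lieg,\Lien]_{\Lie}}\big)$ followed by $\dim(\im\sigma)=\dim\big({\cal M}^{\Lie}(\Lieg)\big)-\dim\big(\tfrac{\Lier\cap[\Lief,\Lies]_{\Lie}}{[\Lief,\Lier]_{\Lie}\cap[\Lief,\Lies]_{\Lie}}\big)$, which is exactly your alternating-sum argument unpacked into two steps.
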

\begin{proof}
From exact sequence (\ref{exact seq}) we have ${\rm dim} \left( {\cal M}^{\rm Lie} \left( \frac{\Lieg}{\Lien} \right) \right) = {\rm dim} \left( \im(\sigma) \right) + {\rm dim} \left( \frac{\Lien \cap [\Lieg, \Lieg]_{\Lie}}{[\Lieg, \Lien]_{\Lie}}  \right) = {\rm dim} \left( {\cal M}^{\rm Lie} \left( \Lieg \right) \right) - {\rm dim} \left( \frac{\Lier \cap [\Lief, \Lies]_{\Lie}}{[\Lief, \Lier]_{\Lie} \cap [\Lief, \Lies]_{\Lie}}  \right) + {\rm dim} \left( \frac{\Lien \cap [\Lieg, \Lieg]_{\Lie}}{[\Lieg, \Lien]_{\Lie}}  \right)$
\end{proof}

\begin{De}
Let $\Lieq$ be a \Lie-nilpotent Leibniz algebra of class $k$. An extension of Leibniz algebras $0 \to \Lien \to \Lieg \stackrel{\pi}\to \Lieq \to 0$ is said to be of class $k$ if $\Lieg$ is nilpotent of class $k$.
\end{De}

\begin{Th}
A \Lie-central extension $0 \to \Lien \to \Lieg \stackrel{\pi}\to \Lieq \to 0$ is of class $k$ if and only if $\theta : {\cal M}^{\rm Lie} \left({\Lieg} \right) \to \Lien$  vanishes over $\ker(\tau)$, where $\tau : {\cal M}^{\rm Lie} \left( {\Lieq} \right)\to {\cal M}^{\rm Lie} \left( \Lieq/\Lieq^{[k]} \right)$ is induced by the canonical projection $\Lieq \twoheadrightarrow  \Lieq^{[k]}$.
\end{Th}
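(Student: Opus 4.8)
The plan is to transport the statement to a fixed free presentation $0 \to \Lier \to \Lief \stackrel{\rho}\to \Lieg \to 0$ and reduce it to the single identity $\theta(\ker(\tau)) = \Lieg^{[k+1]}$ inside $\Lien$. Put $\Lies = \rho^{-1}(\Lien)$, so $\Lier \subseteq \Lies$, $\Lies/\Lier \cong \Lien$, and $0 \to \Lies \to \Lief \to \Lieq \to 0$ is a free presentation of $\Lieq$; as $\Lieq^{[k]} \cong (\Lief^{[k]}+\Lies)/\Lies$, also $0 \to \Lief^{[k]}+\Lies \to \Lief \to \Lieq/\Lieq^{[k]} \to 0$ is a free presentation. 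With these presentations the Baer invariants become ${\cal M}^{\Lie}(\Lieq) \cong \frac{\Lies \cap [\Lief,\Lief]_{\Lie}}{[\Lief,\Lies]_{\Lie}}$ and ${\cal M}^{\Lie}(\Lieq/\Lieq^{[k]}) \cong \frac{(\Lief^{[k]}+\Lies)\cap[\Lief,\Lief]_{\Lie}}{\Lief^{[k+1]}+[\Lief,\Lies]_{\Lie}}$ (using $[\Lief,\Lief^{[k]}+\Lies]_{\Lie}=\Lief^{[k+1]}+[\Lief,\Lies]_{\Lie}$); by naturality of the Baer invariant $\tau$ is $x+[\Lief,\Lies]_{\Lie}\mapsto x+(\Lief^{[k+1]}+[\Lief,\Lies]_{\Lie})$, and the connecting morphism $\theta$ of (\ref{six-term}) for the given extension (whose source is ${\cal M}^{\Lie}(\Lieq)=HL^{\Lie}_2(\Lieq)$) is $x+[\Lief,\Lies]_{\Lie}\mapsto x+\Lier\in\Lies/\Lier=\Lien$. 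This last map is well defined precisely because $\Lie$-centrality gives $[\Lief,\Lies]_{\Lie}\subseteq\Lier$: indeed $\rho([\Lief,\Lies]_{\Lie})=[\Lieg,\Lien]_{\Lie}=0$ since $\Lien\subseteq Z_{\Lie}(\Lieg)$.

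Next I would record the behaviour of the lower $\Lie$-central series under $\rho$: an easy induction (surjectivity of $\rho$ together with $\Lief^{[i]}=[\Lief,\Lief^{[i-1]}]_{\Lie}$) gives $\rho(\Lief^{[i]})=\Lieg^{[i]}$, and composing with $\Lieg\to\Lieq$ gives $\pi\rho(\Lief^{[i]})=\Lieq^{[i]}$. Since $\Lieq$ has class $k$, it follows that $\Lieg^{[k+1]}$ surjects onto $\Lieq^{[k+1]}=0$, hence $\Lieg^{[k+1]}\subseteq\Lien\subseteq Z_{\Lie}(\Lieg)$; consequently $\Lief^{[k+1]}\subseteq\Lies$ and $\Lieg^{[k+2]}=[\Lieg^{[k+1]},\Lieg]_{\Lie}=0$, while $\Lieg^{[k]}$ surjects onto $\Lieq^{[k]}\neq0$ so $\Lieg^{[k]}\neq0$. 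Thus $\Lieg$ is $\Lie$-nilpotent (Proposition~\ref{nilpotency}(c)) of class $k$ or $k+1$, and the extension is of class $k$ if and only if $\Lieg^{[k+1]}=0$, i.e. if and only if $\Lief^{[k+1]}\subseteq\Lier$.

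The computation is then short. An element $x+[\Lief,\Lies]_{\Lie}$ of ${\cal M}^{\Lie}(\Lieq)$ lies in $\ker(\tau)$ iff $x\in\Lief^{[k+1]}+[\Lief,\Lies]_{\Lie}$; and since $\Lief^{[k+1]}+[\Lief,\Lies]_{\Lie}$ is already contained in $\Lies\cap[\Lief,\Lief]_{\Lie}$ (because $\Lief^{[k+1]}\subseteq\Lies\cap[\Lief,\Lief]_{\Lie}$ by the previous paragraph and $[\Lief,\Lies]_{\Lie}\subseteq\Lier\subseteq\Lies$), we get $\ker(\tau)=\frac{\Lief^{[k+1]}+[\Lief,\Lies]_{\Lie}}{[\Lief,\Lies]_{\Lie}}$ as a subspace of ${\cal M}^{\Lie}(\Lieq)$. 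Applying $\theta$ and using $[\Lief,\Lies]_{\Lie}\subseteq\Lier$ yields $\theta(\ker(\tau))=\frac{\Lief^{[k+1]}+[\Lief,\Lies]_{\Lie}+\Lier}{\Lier}=\frac{\Lief^{[k+1]}+\Lier}{\Lier}=\rho(\Lief^{[k+1]})=\Lieg^{[k+1]}$ inside $\Lien$. Hence $\theta$ vanishes over $\ker(\tau)$ iff $\Lieg^{[k+1]}=0$ iff the extension is of class $k$, which is the assertion; naturality of the conclusion follows from the naturality of all the identifications used.

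I expect the work to be essentially bookkeeping. The two points needing care are: (i) checking that, under the chosen free presentations, $\tau$ really is the functorially induced map and $\theta$ really is the connecting morphism of (\ref{six-term}) with the displayed formulas — the first by naturality of the Baer invariant, the second by the standard description of the connecting morphism already used in this section; and (ii) keeping the intersections honest, which comes down to the two containments $\Lief^{[k+1]}\subseteq\Lies$ and, crucially, $[\Lief,\Lies]_{\Lie}\subseteq\Lier$ — the latter being exactly where the $\Lie$-centrality hypothesis enters. Beyond this there is no conceptual obstacle.
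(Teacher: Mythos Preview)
Your proof is correct and follows essentially the same approach as the paper: both fix a free presentation of $\Lieg$, set $\Lies=\rho^{-1}(\Lien)$, identify $\ker(\tau)$ with $\dfrac{\Lief^{[k+1]}+[\Lief,\Lies]_{\Lie}}{[\Lief,\Lies]_{\Lie}}$ (the paper writes this as $\dfrac{[\Lief,\Liet]_{\Lie}}{[\Lief,\Lies]_{\Lie}}$ with $\Liet=(\pi\rho)^{-1}(\Lieq^{[k]})=\Lief^{[k]}+\Lies$, which is the same thing), and use $[\Lief,\Lies]_{\Lie}\subseteq\Lier$ from $\Lie$-centrality. Your presentation is slightly more streamlined in that you phrase everything as the single identity $\theta(\ker\tau)=\Lieg^{[k+1]}$, whereas the paper handles the two implications separately, but the content is identical.
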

\begin{proof}
Consider the following diagrams of free presentations:
\[
\xymatrix{& & 0 \ar[d] & 0 \ar[ld]  &   &      & & 0 \ar[d] & 0 \ar[ld]  &     \\
&  & {\Lier} \ar[d]\ar[ld] & &              &  &  & {\Lies} \ar[d]\ar[ld] & &  \\
0 \ar[r] & {\Lies} \ \ar[r] \ar[d] & {\Lief} \ar[d]^{\rho} \ar[rd]^{\pi \circ \rho} & &    &  0 \ar[r] & {\Liet} \ \ar[r] \ar[d] & {\Lief} \ar[d]^{\pi \circ \rho} \ar[rd]^{pr \circ \pi \circ \rho} & &   \\
0 \ar[r]& {\Lien} \ar[r] \ar[d]& {\Lieg} \ar[r]^{\pi} \ar[d]& {\Lieq} \ar[r] \ar[dr] & 0   &  0 \ar[r]& {\Lieq}^{[k]} \ar[r] \ar[d]& {\Lieq} \ar[r]^{pr} \ar[d]& {\Lieq}/{\Lieq}^{[k]} \ar[r] \ar[dr] & 0 \\
 & 0 & 0 &  & 0 & & 0 & 0 &  & 0
}
\]
then $\theta : {\cal M}^{\rm Lie} \left( \Lieq \right) = \frac{\Lies \cap [\Lief, \Lief]_{\Lie}}{[\Lief, \Lies]_{\Lie}} \to \Lien$, given by $\theta(x+[{\Lief},{\Lies}]_{\Lie})= \rho(x)$, is well-defined and ${\ker}(\tau) \cong \frac{[{\Lief},\Liet]_{\Lie}}{[{\Lief},{\Lies}]_{\Lie}}$.

Assume that {\Lieg} is \Lie-nilpotent of class $k$ and consider $x+[{\Lief},{\Lies}]_{\Lie} \in {\ker}(\tau)$. Then $\theta(x+ [{\Lief},{\Lies}]_{\Lie}) = \rho(x) = 0$ since $\rho(x) \in [\rho({\Lief}), \rho({\Liet})]_{\Lie} \subseteq [{\Lieg}^{[k]}+{\Lien}, {\Lieg}]_{\Lie} = {\Lieg}^{[k+1]} = 0$. For the last inclusion is necessary to have in mind that $\pi \circ \rho({\Liet}) \subseteq {\Lieq}^{[k]} = \pi({\Lieg}^{[k]})$ and consequently $\rho({\Liet}) \subseteq {\Lieg}^{[k]} +{\Lien}$.

Conversely, ${\Lieg}^{[k+1]} = [{\Lieg}^{[k]}, {\Lieg}]_{\Lie} = [\rho({\Lief}^{[k]}), \rho({\Lief})]_{\Lie} \subseteq \rho[{\Liet},{\Lief}]_{\Lie}  =0$  since $[{\Liet},{\Lief}]_{\Lie} \subseteq {\Lier}$ because $\theta$ vanishes over ${\ker}(\tau)$. For the last inclusion is necessary to have in mind that $\pi \circ \rho({\Lief}^{[k]}) \subseteq {\Lieq}^{[k]}$, hence ${\Lief}^{[k]} \subseteq {\Liet}$.
\end{proof}

\begin{Pro}
Let $\Lieg$ be a $\Lie$-nilpotent Leibniz algebra and $f : \Lieg \twoheadrightarrow \Lieq$ be a surjective homomorphism of Leibniz algebras. If $\ker(f) \subseteq [\Lieg, \Lieg]_{\Lie}$ and ${\cal M}^{\Lie}(\Lieq) =0$, then $f$ is an isomorphism. In particular, if ${\cal M}^{\Lie}(\Lieg/[\Lieg,\Lieg]_{\Lie}) =0$, then ${\cal M}^{\Lie}(\Lieg) =0$.
\end{Pro}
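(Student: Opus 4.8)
The plan is to feed the extension $0\to\Ker(f)\to\Lieg\xrightarrow{f}\Lieq\to0$ into the four-term exact sequence (\ref{exact seq}) of Theorem \ref{sequence}, and then to push $\Ker(f)$ down the lower $\Lie$-central series using the $\Lie$-nilpotency of $\Lieg$.

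First I would fix a free presentation $0\to\Lier\to\Lief\stackrel{\rho}\to\Lieg\to0$, put $\Lien=\Ker(f)$ and $\Lies=\rho^{-1}(\Lien)$. Then $\Lies$ is a two-sided ideal of $\Lief$ containing $\Lier$, and $\rho$ induces $\frac{\Lies+\Lier}{\Lier}=\frac{\Lies}{\Lier}\cong\Lien$, so the hypotheses of Theorem \ref{sequence} are met with $\Lieg/\Lien\cong\Lieq$. Exactness of (\ref{exact seq}) at its rightmost term yields a surjection $\tau:{\cal M}^{\Lie}(\Lieq)\twoheadrightarrow\frac{\Lien\cap[\Lieg,\Lieg]_{\Lie}}{[\Lieg,\Lien]_{\Lie}}$; since ${\cal M}^{\Lie}(\Lieq)=0$ by hypothesis, this forces $\Lien\cap[\Lieg,\Lieg]_{\Lie}=[\Lieg,\Lien]_{\Lie}$, and then the assumption $\Lien\subseteq[\Lieg,\Lieg]_{\Lie}$ gives $\Lien=[\Lieg,\Lien]_{\Lie}$.

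Next I would prove the Nakayama-type fact that a two-sided ideal $\Lien$ of a $\Lie$-nilpotent Leibniz algebra $\Lieg$ with $\Lien=[\Lieg,\Lien]_{\Lie}$ must be $0$: by induction, $\Lien\subseteq\Lieg=\Lieg^{[1]}$, and if $\Lien\subseteq\Lieg^{[m]}$ then, using that $[-,-]_{\Lie}$ is symmetric in its two arguments, $\Lien=[\Lieg,\Lien]_{\Lie}=[\Lien,\Lieg]_{\Lie}\subseteq[\Lieg^{[m]},\Lieg]_{\Lie}=\Lieg^{[m+1]}$; hence $\Lien\subseteq\bigcap_{m\geq1}\Lieg^{[m]}=0$ because $\Lieg^{[k+1]}=0$ for the $\Lie$-nilpotency class $k$ of $\Lieg$. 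Thus $\Ker(f)=0$, and being surjective $f$ is an isomorphism. The only place that needs a little care is the invocation of Theorem \ref{sequence}, namely that $\Lies=\rho^{-1}(\Lien)$ satisfies its hypotheses and that the surjectivity of $\tau$ is exactly what exactness at the last spot of (\ref{exact seq}) provides; the rest is formal.

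For the final assertion I would apply the statement just proved to the canonical projection $p:\Lieg\twoheadrightarrow\Lieg/[\Lieg,\Lieg]_{\Lie}$: its kernel $[\Lieg,\Lieg]_{\Lie}$ is (trivially) contained in $[\Lieg,\Lieg]_{\Lie}$ and ${\cal M}^{\Lie}(\Lieg/[\Lieg,\Lieg]_{\Lie})=0$ by hypothesis, so $p$ is an isomorphism, whence $[\Lieg,\Lieg]_{\Lie}=0$ and ${\cal M}^{\Lie}(\Lieg)={\cal M}^{\Lie}(\Lieg/[\Lieg,\Lieg]_{\Lie})=0$.
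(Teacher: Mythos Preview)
Your proof is correct and follows essentially the same route as the paper: invoke the exact sequence (\ref{exact seq}) to obtain $\Lien=[\Lieg,\Lien]_{\Lie}$, then use $\Lie$-nilpotency to push $\Lien$ down the lower $\Lie$-central series and conclude $\Lien=0$. The only minor differences are cosmetic: the paper phrases the descent via the relative lower $\Lie$-central series $\Lien^{[i]}=[\Lien^{[i-1]},\Lieg]_{\Lie}$ (citing \cite[Definition 11]{CKh}) rather than your direct induction $\Lien\subseteq\Lieg^{[m]}$, and the paper leaves the ``In particular'' clause to the reader whereas you spell it out explicitly.
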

\begin{proof}
Let $\Lien = \ker(f)$, then ${\cal M}^{\Lie}(\Lieg/\Lien) =0$.
From exact sequence (\ref{exact seq}) we have that $\Lien \cap [\Lieg, \Lieg]_{\Lie} \subseteq [\Lieg, \Lien]_{\Lie}$, then $\Lien \subseteq [\Lieg, \Lien]_{\Lie}$. Obviously $\supseteq$ is true, then  $\Lien = [\Lieg, \Lien]_{\Lie}$.

Let $\Lien^{[i]}=[\Lien^{[i-1]},\Lieg]_{\Lie}$ be the $i$-th term of the lower $\Lie$-central series of $\Lieg$ relative to $\Lien$ (see \cite[Definition 11]{CKh} for details). Obviously $\Lien = \Lien^{[i]} \subseteq \Lieg^{[i]}$, for all $i \in \mathbb{N}$.

Since $\Lieg$ is $\Lie$-nilpotent, there exists $k \in \mathbb{N}$ such that $\Lieg^{[k]}=0$, which implies that $\Lien=0$ and, consequently, $f$ is an isomorphism.
\end{proof}

%----------------------------------------------------------------------------------------
\section{\Lie-stem covers} \label{stem cover}

The study of different types of \Lie-central extensions together with its corresponding characterizations is the subject of section 3.3 in \cite{CKh}. To summarize, a $\Lie$-central extension $f : \Lieg \twoheadrightarrow \Lieq$ is said to be a $\Lie$-{\it stem extension} if $\Lieg_{\Lie} \cong \Lieq_{\Lie}$. Additionally,  if  the induced map ${\cal M}^{\rm Lie} (\Lieg) \to {\cal M}^{\rm Lie} (\Lieq)$ is the zero map, then $f : \Lieg \twoheadrightarrow \Lieq$ is said to be a $\Lie$-{\it stem cover}. In this last case, $\Lieg$ is said to be a $\Lie$-{\it cover} or a $\Lie$-{\it covering} algebra.

A $\Lie$-stem extension $f : \Lieg \twoheadrightarrow \Lieq$ is characterized by the fact $\Lien \subseteq \Lieg^{\rm ann}$, equivalently, the map $\theta^{\ast}(\Lieg) : {\cal M}^{\rm Lie}(\Lieq) \to \Lien$ is an epimorphism. When $\theta$ is an isomorphism, then the $\Lie$-stem extension is a $\Lie$-stem cover (see \cite[Proposition 5, Proposition 6]{CKh} for details).

Now we are going to analyze the interplay between $\Lie$-stem covers and the Schur $\Lie$-multiplier.

\begin{Le} \label{lema 1}
Let $0 \to \Lier \to \Lief \stackrel{\rho} \to \Lieg \to 0$ be a free presentation of a Leibniz algebra $\Lieg$ and let $0 \to \Liem \to \Liep \stackrel{\theta} \to \Lieq \to 0$ be a $\Lie$-central extension of another Leibniz algebra $\Lieq$. Then for each homomorphism $\alpha : \Lieg \to \Lieq$, there exists a homomorphism $\beta : \frac{\Lief}{[\Lief, \Lier]_{\Lie}} \to \Liep$ such that $\beta \left( \frac{\Lier}{[\Lief, \Lier]_{\Lie}} \right) \subseteq \Liem$ and the following diagram is commutative:
\[
\xymatrix{
0 \ar[r] & \frac{\Lier}{[\Lief, \Lier]_{\Lie}} \ar[r] \ar[d]^{\beta_{\mid}} & \frac{\Lief}{[\Lief, \Lier]_{\Lie}} \ar[r]^{\overline{\rho}} \ar[d]^{\beta} & \Lieg \ar[r] \ar[d]^{\alpha} & 0\\
0 \ar[r] & \Liem \ar[r] & \Liep \ar[r]^{\psi} & \Lieq \ar[r] & 0
}
\]
\end{Le}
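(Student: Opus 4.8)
The plan is the standard projective-lifting argument, with the one genuinely ``relative'' point being that $\Lie$-centrality of the bottom extension forces the lift to annihilate $[\Lief,\Lier]_{\Lie}$. First I would use that $\Lief$ is free, hence projective with respect to surjective homomorphisms of Leibniz algebras: writing $\Lief$ as the free Leibniz algebra on a set $X$, and using that the bottom projection $\theta:\Liep\twoheadrightarrow\Lieq$ is onto, I choose for each $x\in X$ an element $p_x\in\Liep$ with $\theta(p_x)=\alpha(\rho(x))$. By the universal property of free Leibniz algebras this extends to a homomorphism $\widetilde{\beta}:\Lief\to\Liep$, and since $\theta\circ\widetilde{\beta}$ and $\alpha\circ\rho$ are homomorphisms agreeing on the generating set $X$, we get $\theta\circ\widetilde{\beta}=\alpha\circ\rho$.

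Next I would check that $\widetilde{\beta}$ descends to the quotient $\Lief/[\Lief,\Lier]_{\Lie}$. On one hand $\theta(\widetilde{\beta}(\Lier))=\alpha(\rho(\Lier))=0$, so $\widetilde{\beta}(\Lier)\subseteq\Ker(\theta)=\Liem$. On the other hand, $[\Lief,\Lier]_{\Lie}$ is spanned by the elements $[f,r]+[r,f]$ with $f\in\Lief$, $r\in\Lier$, and
\[
\widetilde{\beta}\bigl([f,r]+[r,f]\bigr)=[\widetilde{\beta}(f),\widetilde{\beta}(r)]+[\widetilde{\beta}(r),\widetilde{\beta}(f)]\in[\Liep,\Liem]_{\Lie}=0,
\]
the last equality because $0\to\Liem\to\Liep\xrightarrow{\theta}\Lieq\to 0$ is $\Lie$-central, i.e. $\Liem\subseteq Z_{\Lie}(\Liep)$, equivalently $[\Liem,\Liep]_{\Lie}=0$. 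Hence $\widetilde{\beta}$ vanishes on $[\Lief,\Lier]_{\Lie}$ (note $[\Lief,\Lier]_{\Lie}\subseteq\Lier$ since $\Lier$ is a two-sided ideal, so the quotient makes sense).

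Therefore $\widetilde{\beta}$ factors as $\widetilde{\beta}=\beta\circ q$, where $q:\Lief\to\Lief/[\Lief,\Lier]_{\Lie}$ is the projection and $\beta:\Lief/[\Lief,\Lier]_{\Lie}\to\Liep$ is a homomorphism. By the previous step $\beta$ sends $\Lier/[\Lief,\Lier]_{\Lie}$ into $\Liem$, giving the restriction $\beta_{\mid}$, and from $\theta\circ\widetilde{\beta}=\alpha\circ\rho$ together with the fact that $\overline{\rho}$ is the map induced by $\rho$ on $\Lief/[\Lief,\Lier]_{\Lie}$ one reads off $\theta\circ\beta=\alpha\circ\overline{\rho}$, i.e. the right-hand square commutes (the left-hand square commutes trivially since it consists of restrictions). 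I do not anticipate any real obstacle here: the only substantive ingredient is the projectivity of free objects in $\Leib$ with respect to surjections, and the rest is the computation above showing $\Lie$-centrality kills $[\Lief,\Lier]_{\Lie}$.
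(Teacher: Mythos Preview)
Your argument is correct and follows essentially the same route as the paper: lift $\alpha\circ\rho$ through the surjection using freeness of $\Lief$, observe that the lift sends $\Lier$ into $\Liem$, and conclude from $\Lie$-centrality that it kills $[\Lief,\Lier]_{\Lie}$, hence descends to the desired $\beta$. Your write-up is in fact more explicit than the paper's (which simply asserts the vanishing on $[\Lief,\Lier]_{\Lie}$ without spelling out the computation), but the underlying idea is identical.
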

where $\overline{\rho}$ is the natural epimorphism induced by $\rho$.

\begin{proof}
Since $\Lief$ is a free Leibniz algebra, then there exists $\omega : \Lief \to \Liep$ such that $\psi \circ \omega = \alpha \circ \rho$.

On the other hand, $\psi ( \omega(\Lier)) = \alpha(\rho(\Lier))=0$, hence $\omega(\Lier) \subseteq \Liem$, which implies the vanishing of $\omega$ over $[\Lief, \Lier]_{\Lie}$. So $\omega$ induces $\beta : \frac{\Lief}{[\Lief, \Lier]_{\Lie}} \to \Liep$ and, for any $r \in \Lier$,  $\beta(r+[\Lief,\Lier]_{\Lie}) = \omega(r) \in \Liem$.
\end{proof}

\begin{Th} \label{Lie stem cover}
Let $\Lieg$ be a Leibniz algebra such that ${\cal M}^{\Lie}(\Lieg)$ is finite-dimensional and let $0 \to \Lier \to \Lief \stackrel{\rho} \to \Lieg \to 0$ be a free presentation of $\Lieg$. Then the extension  $0 \to \Liem \to \Liep \stackrel{\psi} \to \Lieg \to 0$ is a $\Lie$-stem cover if and only if there exists a two-sided ideal $\Lies$ of $\Lief$ such that
\begin{enumerate}
\item[(a)] $\Liep \cong \frac{\Lief}{\Lies}$ and $\Liem \cong \frac{\Lier}{\Lies}$.
\item[(b)] $\frac{\Lier}{[\Lief, \Lier]_{\Lie}} \cong {\cal M}^{\Lie}(\Lieg) \oplus \frac{\Lies}{[\Lief, \Lier]_{\Lie}}$.
\end{enumerate}
\end{Th}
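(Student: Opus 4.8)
The plan is to fix the free presentation $0\to\Lier\to\Lief\to\Lieg\to 0$ of the statement and to read the whole equivalence inside the lattice of two-sided ideals of $\Lief$; throughout I read (b) as the internal decomposition $\Lier/[\Lief,\Lier]_{\Lie}=\big((\Lier\cap[\Lief,\Lief]_{\Lie})/[\Lief,\Lier]_{\Lie}\big)\oplus\big(\Lies/[\Lief,\Lier]_{\Lie}\big)$, with ${\cal M}^{\Lie}(\Lieg)$ identified with the first summand. Two elementary facts will be used repeatedly. First, since $\tfrac12\in\mathbb{K}$, one has $\Lieq^{\rm ann}=[\Lieq,\Lieq]_{\Lie}$ for every Leibniz algebra $\Lieq$ (each $[x,y]+[y,x]=[x+y,x+y]-[x,x]-[y,y]$ lies in $\Lieq^{\rm ann}$, and each $[x,x]=\tfrac12([x,x]+[x,x])$ lies in $[\Lieq,\Lieq]_{\Lie}$); hence, by the descriptions recalled in section~\ref{stem cover}, a $\Lie$-central extension $0\to\Liem\to\Liep\to\Lieg\to 0$ is a $\Lie$-stem extension iff $\Liem\subseteq[\Liep,\Liep]_{\Lie}$, and a $\Lie$-stem cover iff it is a $\Lie$-stem extension for which the induced map ${\cal M}^{\Lie}(\Liep)\to{\cal M}^{\Lie}(\Lieg)$ vanishes. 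Second, if $\Lies\subseteq\Lier$ are two-sided ideals of $\Lief$ and we present $\Liep=\Lief/\Lies$, $\Lieg=\Lief/\Lier$, then (by functoriality of the Hopf-type formula for $HL^{\Lie}_2$) that induced map is the canonical quotient $(\Lies\cap[\Lief,\Lief]_{\Lie})/[\Lief,\Lies]_{\Lie}\to(\Lier\cap[\Lief,\Lief]_{\Lie})/[\Lief,\Lier]_{\Lie}$, so it vanishes exactly when $\Lies\cap[\Lief,\Lief]_{\Lie}\subseteq[\Lief,\Lier]_{\Lie}$.

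For the ``only if'' direction I start from a $\Lie$-stem cover $0\to\Liem\to\Liep\stackrel{\psi}{\to}\Lieg\to 0$, which is in particular $\Lie$-central, and apply Lemma~\ref{lema 1} with $\alpha=\mathrm{id}_{\Lieg}$ to get $\beta\colon\Lief/[\Lief,\Lier]_{\Lie}\to\Liep$ with $\psi\circ\beta=\overline{\rho}$ and $\beta(\Lier/[\Lief,\Lier]_{\Lie})\subseteq\Liem$. The key step is to show $\beta$ is onto: setting $I=\im\beta$ (a subalgebra of $\Liep$), surjectivity of $\overline{\rho}$ gives $\Liep=I+\Liem$, a short computation using the $\Lie$-centrality of $\Liem$ gives $[\Liep,\Liep]_{\Lie}=[I,I]_{\Lie}\subseteq I$, and then $\Liem\subseteq\Liep^{\rm ann}=[\Liep,\Liep]_{\Lie}\subseteq I$ forces $\Liep=I$. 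Now let $\Lies\trianglelefteq\Lief$ be the preimage of $\ker\beta$ under $\Lief\twoheadrightarrow\Lief/[\Lief,\Lier]_{\Lie}$, so $[\Lief,\Lier]_{\Lie}\subseteq\Lies$; diagram chases with $\psi\circ\beta=\overline{\rho}$ give $\Lies\subseteq\Lier$, $\Lief/\Lies\cong\Liep$ and $\Lier/\Lies\cong\Liem$, which is (a). For (b): the subspaces ${\cal M}^{\Lie}(\Lieg)=(\Lier\cap[\Lief,\Lief]_{\Lie})/[\Lief,\Lier]_{\Lie}$ and $\Lies/[\Lief,\Lier]_{\Lie}$ span $\Lier/[\Lief,\Lier]_{\Lie}$, because $\Liem\subseteq\Liep^{\rm ann}$ means $\Lier\subseteq\Lief^{\rm ann}+\Lies=[\Lief,\Lief]_{\Lie}+\Lies$ and hence $\Lier=(\Lier\cap[\Lief,\Lief]_{\Lie})+\Lies$ by the modular law; and they meet trivially, because (using $\Lies\subseteq\Lier$) their intersection is $(\Lies\cap[\Lief,\Lief]_{\Lie})/[\Lief,\Lier]_{\Lie}$, which is $0$ by the vanishing criterion above.

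For the ``if'' direction, assume such an $\Lies$ exists; (a) and (b) force $[\Lief,\Lier]_{\Lie}\subseteq\Lies\subseteq\Lier$, so the given extension is (isomorphic to) $0\to\Lier/\Lies\to\Lief/\Lies\to\Lieg\to 0$. It is $\Lie$-central since $[\Lief/\Lies,\Lier/\Lies]_{\Lie}=([\Lief,\Lier]_{\Lie}+\Lies)/\Lies=0$; it is a $\Lie$-stem extension since the spanning part of (b) yields $\Lier=(\Lier\cap[\Lief,\Lief]_{\Lie})+\Lies\subseteq\Lief^{\rm ann}+\Lies$, i.e.\ $\Lier/\Lies\subseteq(\Lief/\Lies)^{\rm ann}$; and it is a $\Lie$-stem cover since the trivial-intersection part of (b) reads $\Lies\cap[\Lief,\Lief]_{\Lie}=(\Lier\cap[\Lief,\Lief]_{\Lie})\cap\Lies=[\Lief,\Lier]_{\Lie}$, which is exactly the vanishing of ${\cal M}^{\Lie}(\Lief/\Lies)\to{\cal M}^{\Lie}(\Lieg)$ by the criterion of the first paragraph.

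I expect the ``only if'' half to be the real work: one must lift $\mathrm{id}_{\Lieg}$ through Lemma~\ref{lema 1} (so that $[\Lief,\Lier]_{\Lie}$ gets killed automatically) and then prove the lift is surjective, which is precisely where $\Lie$-centrality of $\Liem$ together with $\Liep^{\rm ann}=[\Liep,\Liep]_{\Lie}$ is used in an essential way. Once $\Lies$ is in hand, (a) is immediate and (b) is pure lattice bookkeeping — the modular law plus intersecting with $\Lies$ — after the map on Schur $\Lie$-multipliers has been recognized as the evident quotient of Hopf-type formulas; run backwards, the same bookkeeping delivers the ``if'' direction. The finite-dimensionality of ${\cal M}^{\Lie}(\Lieg)$ does not seem to enter this equivalence; it is the standing hypothesis under which $\Lie$-stem covers are produced later in the section.
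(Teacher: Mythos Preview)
Your argument is correct and follows the same skeleton as the paper: apply Lemma~\ref{lema 1} with $\alpha=\mathrm{id}_{\Lieg}$, prove $\beta$ is onto via $\Liep=\im(\beta)+\Liem$ together with $\Liem\subseteq\Liep^{\rm ann}\subseteq[\im(\beta),\im(\beta)]_{\Lie}$, set $\Lies$ from $\ker\beta$, and then read off (a) and (b).

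The genuine divergence is in how you establish that the sum in (b) is direct. The paper argues indirectly: it shows $\beta$ restricts to an epimorphism ${\cal M}^{\Lie}(\Lieg)\twoheadrightarrow\Liem$ with kernel ${\cal M}^{\Lie}(\Lieg)\cap(\Lies/[\Lief,\Lier]_{\Lie})$, then invokes the characterization from \cite[Proposition~6]{CKh} that in a $\Lie$-stem cover $\theta$ is an isomorphism (so $\dim\Liem=\dim{\cal M}^{\Lie}(\Lieg)$), and concludes by dimension count that the intersection vanishes. This is precisely where the finite-dimensionality of ${\cal M}^{\Lie}(\Lieg)$ is actually used. You instead translate the \emph{definition} of $\Lie$-stem cover --- vanishing of ${\cal M}^{\Lie}(\Liep)\to{\cal M}^{\Lie}(\Lieg)$ --- through the compatible free presentations $0\to\Lies\to\Lief\to\Liep\to 0$ and $0\to\Lier\to\Lief\to\Lieg\to 0$ into the inclusion $\Lies\cap[\Lief,\Lief]_{\Lie}\subseteq[\Lief,\Lier]_{\Lie}$, which is literally the trivial-intersection statement. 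Your route is shorter, and your closing remark is on point: with this argument the finite-dimensionality hypothesis is not needed for the equivalence itself. For the converse the paper appeals again to \cite[Proposition~6]{CKh}, while you verify $\Lie$-centrality, $\Lie$-stem, and the vanishing criterion directly; both are fine.
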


\begin{proof}
Let $0 \to \Liem \to \Liep \stackrel{\psi} \to \Lieg \to 0$ be a $\Lie$-stem cover. By Lemma \ref{lema 1}, there exists a homomorphism $\beta : \frac{\Lief}{[\Lief, \Lier]_{\Lie}} \to \Liep$ such that $\psi \circ \beta = \overline{\rho}$ and $\beta(\frac{\Lier}{[\Lief,\Lier]_{\Lie}}) \subseteq \Liem$.

Since $\Liep = \im(\beta) + \Liem$ and $\Liem \subseteq Z_{\Lie}(\Liep)$, then by \cite[Proposition 5 (e)]{CKh} $\Liem \subseteq \Liep^{\rm ann} = [\im(\beta) + \Liem, \im(\beta) + \Liem] \subseteq \im(\beta)$. Thus $\beta$ is surjective and  $\beta(\frac{\Lier}{[\Lief,\Lier]_{\Lie}}) = \Liem$.

Set $\ker(\beta) = \frac{\Lies}{[\Lief,\Lier]_{\Lie}}$, then $\Liep \cong \frac{ {\Lief}/{[\Lief,\Lier]_{\Lie}}}{{\Lies}/{[\Lief,\Lier]_{\Lie}} }\cong \frac{\Lief}{\Lies}$ and $\Liem \cong \frac{ {\Lier} / [\Lief,\Lier]_{\Lie}}{{\Lies}/{[\Lief,\Lier]_{\Lie}}} \cong \frac{\Lier}{\Lies}$.

Now it remains to show statement {\it (b)}. Clearly $\beta \left( {\cal M}^{\Lie}(\Lieg) \right) = \beta \left( \frac{\Lier \cap [\Lief, \Lief]_{\Lie}}{[\Lief, \Lier]_{\Lie}}  \right) \subseteq \beta \left( \frac{\Lier} {[\Lief, \Lier]_{\Lie}} \right) \cap \beta \left( \frac{ [\Lief, \Lief]_{\Lie}}{[\Lief, \Lier]_{\Lie}}  \right) = \Liem \cap [\Liep, \Liep]_{\Lie} = \Liem$.

Conversely  $\Liem \subseteq \beta \left( {\cal M}^{\Lie}(\Lieg) \right)$. Indeed, in one side  $\Liem = \beta \left( \frac{\Lier} {[\Lief, \Lier]_{\Lie}} \right)$ and, in the other side, $\Liem \subseteq \Liep^{\rm ann} \subseteq [\Liep, \Liep]_{\Lie}$, therefore for any $m \in \Liem$, there exists $x \in [\Lief, \Lief]_{\Lie}$ such that $\beta(x+[\Lief, \Lier]_{\Lie}) = m$. Then $\beta(x+[\Lief, \Lier]_{\Lie}) = m = \beta(r+[\Lief, \Lier]_{\Lie})$, thus $x-r+[\Lief, \Lier]_{\Lie} \in \ker(\beta) =  \frac{\Lies}{[\Lief,\Lier]_{\Lie}} \subseteq \frac{\Lier}{[\Lief,\Lier]_{\Lie}}$ which implies that $x+[\Lief, \Lier]_{\Lie} \in  \frac{\Lier}{[\Lief,\Lier]_{\Lie}}$, hence $x \in \Lier$.
Summarizing, $m = \beta(x+[\Lief, \Lier]_{\Lie})$, whit $x \in \Lier \cap [\Lief, \Lief]_{\Lie}$, i.e. $m \in \beta \left( {\cal M}^{\Lie}(\Lieg) \right)$.

Therefore, $\beta$ restricts to an epimorphism from ${\cal M}^{\Lie}(\Lieg)$ onto $\Liem$ and we have the following commutative diagram:
\[
\xymatrix{
{\cal M}^{\Lie}(\Lieg) \cap \frac{\Lies}{[\Lief, \Lier]_{\Lie}} \ar@{>->}[rd] & & \frac{\Lies}{[\Lief, \Lier]_{\Lie}} \ar@{>->}[d] \ar@{=}[r]  &  \frac{\Lies}{[\Lief, \Lier]_{\Lie}} \ar@{>->}[d] & \\
& {\cal M}^{\Lie}(\Lieg) \ar@{>>}[rd]^{\beta_{\mid}}  \ar@{^{(}->}[r] & \frac{\Lier}{[\Lief, \Lier]_{\Lie}} \ar@{>->}[r] \ar@{>>}[d]^{\beta_{\mid}} & \frac{\Lief}{[\Lief, \Lier]_{\Lie}} \ar@{>>}[r]^{\overline{\rho}} \ar[d]^{\beta} & \Lieg  \ar@{=}[d]\\
&  & \Liem \ar@{>->}[r] & \Liep \ar@{>>}[r]^{\psi} & \Lieq
}
\]
Now, for any $r + [\Lief, \Lier]_{\Lie} \in \frac{\Lier}{[\Lief, \Lier]_{\Lie}}$, since $\beta_{\mid} \left( r + [\Lief, \Lier]_{\Lie} \right) \in \Liem$, then there exists $x + [\Lief, \Lier]_{\Lie} \in {\cal M}^{\Lie}(\Lieg)$ such that $\beta_{\mid} \left( x + [\Lief, \Lier]_{\Lie} \right) = \beta_{\mid} \left( r + [\Lief, \Lier]_{\Lie} \right)$, hence $r- x + [\Lief, \Lier]_{\Lie} \in \ker(\beta_{\mid}) = \frac{\Lies}{[\Lief, \Lier]_{\Lie}}$, consequently $r + [\Lief, \Lier]_{\Lie} = x + [\Lief, \Lier]_{\Lie} + s + [\Lief, \Lier]_{\Lie}$, i.e.
$$\frac{\Lier}{[\Lief, \Lier]_{\Lie}} \cong {\cal M}^{\Lie}(\Lieg) + \frac{\Lies}{[\Lief, \Lier]_{\Lie}}$$
Moreover, this sum is a direct sum, because ${\rm dim} \left( {\cal M}^{\Lie}(\Lieg) \cap \frac{\Lies}{[\Lief, \Lier]_{\Lie}} \right) + {\rm dim} \left( \Liem \right) = {\rm dim} \left( {\cal M}^{\Lie}(\Lieg) \right) = {\rm dim} \left( \Liem \right)$, which implies that ${\cal M}^{\Lie}(\Lieg) \cap \frac{\Lies}{[\Lief, \Lier]_{\Lie}} = 0$.

Conversely, assume the existence of a two-sided ideal $\Lies$ of $\Lief$ which satisfies statements {\it (a)} and {\it (b)}. Consider $\Liem = \frac{\Lier}{\Lies}, \Liep = \frac{\Lief}{\Lies}$, then $\Lieg \cong \frac{\Liep}{\Liem} \cong \frac{\Lief /\Lies}{\Lier/\Lies}$, and  $0 \to \Liem \to \Liep  \to \Lieg \to 0$ obviously is a $\Lie$-central extension. From {\it (b)} we have the split short exact sequence $0 \to \frac{\Lies}{[\Lief, \Lier]_{\Lie}} \to  \frac{\Lier}{[\Lief, \Lier]_{\Lie}} \to  {\cal M}^{\Lie}(\Lieg)  \to 0$, hence ${\cal M}^{\Lie}(\Lieg) \cong \frac{ {\Lier}/{[\Lief, \Lier]_{\Lie}}}{{\Lies}/{[\Lief, \Lier]_{\Lie}}} \cong \frac{\Lier}{\Lies} \cong \Liem$. Proposition 6 in \cite{CKh} completes the proof.
\end{proof}
\bigskip

Previously to the following result, we need recall some notions concerning $\Lie$-isoclinism of Leibniz algebras from \cite{BC}.

Consider the $\Lie$-central extensions $(g_i) : 0 \to \Lien_i \stackrel{\chi_i}\to \Lieg_i \stackrel{\pi_i} \to \Lieq_i \to 0, i=1, 2,$
Let be $C_i : \Lieq_i \times \Lieq_i \to [\Lieg_i, \Lieg_i]_{\Lie}$ given by $C_i(q_{i1},q_{i2})=[g_{i1},g_{i2}]+[g_{i2},g_{i1}]$, where $\pi_i(g_{ij})=q_{ij}, i, j= 1, 2$, the $\Lie$-commutator map associated to the extension $(g_i)$.

\begin{De} \label{isoclinic}
The $\Lie$-central extensions $(g_1)$ and $(g_2)$ are said to be \Lie-isoclinic when there exist isomorphisms $\eta : \Lieq_1 \to \Lieq_2$ and $\xi : [\Lieg_1, \Lieg_1]_{\Lie} \to [\Lieg_2, \Lieg_2]_{\Lie}$ such that the following diagram is commutative:
\begin{equation}  \label{square isoclinic}
\xymatrix{
\Lieq_1 \times \Lieq_1 \ar[r]^{C_1} \ar[d]_{\eta \times \eta} & [\Lieg_1, \Lieg_1]_{\Lie} \ar[d]^{\xi}\\
\Lieq_2 \times \Lieq_2 \ar[r]^{C_2} & [\Lieg_2, \Lieg_2]_{\Lie}
}
\end{equation}

The pair $(\eta, \xi)$ is called a \Lie-isoclinism from $(g_1)$ to $(g_2)$ and will be denoted by $(\eta, \xi) : (g_1) \to (g_2)$.
\end{De}

\begin{Co}
Let $\Lieg$ be a Leibniz algebra such that its Schur $\Lie$-multiplier is finite-dimensional. Then all $\Lie$-stem covers of $\Lieg$ are \Lie-isoclinic.
\end{Co}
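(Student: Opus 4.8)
The plan is to show that any two $\Lie$-stem covers of $\Lieg$ are $\Lie$-isoclinic by exhibiting an explicit $\Lie$-isoclinism between them, built from Theorem \ref{Lie stem cover}. Fix a free presentation $0 \to \Lier \to \Lief \stackrel{\rho}\to \Lieg \to 0$, and let $0 \to \Liem_1 \to \Liep_1 \to \Lieg \to 0$ and $0 \to \Liem_2 \to \Liep_2 \to \Lieg \to 0$ be two $\Lie$-stem covers. By Theorem \ref{Lie stem cover} there are two-sided ideals $\Lies_1, \Lies_2$ of $\Lief$ with $\Liep_i \cong \Lief/\Lies_i$, $\Liem_i \cong \Lier/\Lies_i$, and $\frac{\Lier}{[\Lief,\Lier]_{\Lie}} \cong {\cal M}^{\Lie}(\Lieg) \oplus \frac{\Lies_i}{[\Lief,\Lier]_{\Lie}}$. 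The key structural consequence is that $\Lies_i$ sits inside $[\Lief,\Lief]_{\Lie} \cap \Lier$ modulo $[\Lief,\Lier]_{\Lie}$ is a complement to the fixed copy of ${\cal M}^{\Lie}(\Lieg)$, so the two ideals $\Lies_1$ and $\Lies_2$, while possibly distinct, produce the ``same'' quotient data up to isomorphism.

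First I would set up the common quotient $\Lieg$ and note that for each cover the commutator subalgebra is recovered intrinsically: $[\Liep_i,\Liep_i]_{\Lie} \cong \frac{[\Lief,\Lief]_{\Lie} + \Lies_i}{\Lies_i} \cong \frac{[\Lief,\Lief]_{\Lie}}{[\Lief,\Lief]_{\Lie} \cap \Lies_i}$. Using part (b) of Theorem \ref{Lie stem cover}, $[\Lief,\Lief]_{\Lie} \cap \Lies_i$ maps isomorphically onto a fixed subspace of $\frac{\Lier\cap[\Lief,\Lief]_{\Lie}}{[\Lief,\Lier]_{\Lie}} = {\cal M}^{\Lie}(\Lieg)$ that is complementary to $\Liem_i \cong {\cal M}^{\Lie}(\Lieg)$; but since $\Liem_i$ is the \emph{whole} multiplier here, one gets that $[\Lief,\Lief]_{\Lie}\cap\Lies_i \subseteq [\Lief,\Lier]_{\Lie}$, so actually $[\Liep_i,\Liep_i]_{\Lie} \cong \frac{[\Lief,\Lief]_{\Lie}}{[\Lief,\Lief]_{\Lie}\cap[\Lief,\Lier]_{\Lie}}$, an expression independent of $i$. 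This yields a canonical isomorphism $\xi : [\Liep_1,\Liep_1]_{\Lie} \to [\Liep_2,\Liep_2]_{\Lie}$. For $\eta$ I would simply take the identity on $\Lieg$, viewing both covers as extensions of the same $\Lieg$.

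Next I would verify that $(\eta,\xi) = (\mathrm{id}_{\Lieg},\xi)$ makes the square \eqref{square isoclinic} commute. Concretely, the $\Lie$-commutator map $C_i : \Lieg \times \Lieg \to [\Liep_i,\Liep_i]_{\Lie}$ sends $(\rho(x),\rho(y))$ to the class of $[x,y]+[y,x]$ in $\frac{[\Lief,\Lief]_{\Lie}}{[\Lief,\Lief]_{\Lie}\cap\Lies_i}$; after the identification above both sides become the class of $[x,y]+[y,x]$ in $\frac{[\Lief,\Lief]_{\Lie}}{[\Lief,\Lief]_{\Lie}\cap[\Lief,\Lier]_{\Lie}}$, so $\xi \circ C_1 = C_2 \circ (\eta\times\eta)$ holds on the nose. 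Since $\rho$ is surjective this determines the maps everywhere, so $(\eta,\xi)$ is a $\Lie$-isoclinism from the first cover to the second, as in Definition \ref{isoclinic}.

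The main obstacle will be carefully justifying the claim that the complement $\Lies_i / [\Lief,\Lier]_{\Lie}$ meets the commutator part trivially, i.e. that $[\Lief,\Lief]_{\Lie}\cap \Lies_i \subseteq [\Lief,\Lier]_{\Lie}$; this is exactly where finite-dimensionality of ${\cal M}^{\Lie}(\Lieg)$ and the direct-sum decomposition of part (b) are used, together with the fact that for a $\Lie$-stem cover $\Liem_i$ equals the full Schur $\Lie$-multiplier of $\Lieg$ (so the complement, being a complement to all of ${\cal M}^{\Lie}(\Lieg)$ inside $\frac{\Lier}{[\Lief,\Lier]_{\Lie}}$, cannot contain any nonzero class coming from $\Lier\cap[\Lief,\Lief]_{\Lie}$). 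Once that is pinned down, the construction of $\xi$ and the commutativity check are routine, and an appeal to Definition \ref{isoclinic} finishes the argument; by transitivity of $\Lie$-isoclinism, all $\Lie$-stem covers of $\Lieg$ are mutually $\Lie$-isoclinic.
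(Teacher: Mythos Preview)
Your proof is correct and rests on the same core observation as the paper's: from the direct-sum decomposition in Theorem~\ref{Lie stem cover}~(b) one has $\frac{\Lies_i}{[\Lief,\Lier]_{\Lie}}\cap{\cal M}^{\Lie}(\Lieg)=0$, hence $\Lies_i\cap[\Lief,\Lief]_{\Lie}=[\Lief,\Lier]_{\Lie}$ (you write $[\Lief,\Lief]_{\Lie}\cap[\Lief,\Lier]_{\Lie}$, but of course $[\Lief,\Lier]_{\Lie}\subseteq[\Lief,\Lief]_{\Lie}$, so this is just $[\Lief,\Lier]_{\Lie}$). From this, $[\Liep_i,\Liep_i]_{\Lie}\cong[\Lief,\Lief]_{\Lie}/[\Lief,\Lier]_{\Lie}$ independently of $i$, and the $\Lie$-commutator maps match.

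The difference from the paper is only in the packaging of the conclusion. The paper phrases the key identity as $\ker(\beta)\cap\bigl[\tfrac{\Lief}{[\Lief,\Lier]_{\Lie}},\tfrac{\Lief}{[\Lief,\Lier]_{\Lie}}\bigr]_{\Lie}=0$, where $\beta:\Lief/[\Lief,\Lier]_{\Lie}\twoheadrightarrow\Liep$ is the epimorphism produced in Theorem~\ref{Lie stem cover}, and then invokes two results from \cite{BC} (Propositions~3.20(b) and~3.5) which say, respectively, that an epimorphism whose kernel meets the $\Lie$-commutator trivially induces a $\Lie$-isoclinism, and that $\Lie$-isoclinism is transitive. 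Thus every $\Lie$-stem cover is $\Lie$-isoclinic to the fixed extension $\Lief/[\Lief,\Lier]_{\Lie}\twoheadrightarrow\Lieg$, and hence they are all mutually $\Lie$-isoclinic. You instead construct the pair $(\eta,\xi)=(\mathrm{id}_{\Lieg},\xi)$ by hand and verify Definition~\ref{isoclinic} directly, which makes the argument self-contained and avoids the citation to \cite{BC}; the paper's route is shorter on the page but relies on that external reference. Either way, the substantive step is the same vanishing $\Lies_i\cap[\Lief,\Lief]_{\Lie}=[\Lief,\Lier]_{\Lie}$.
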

\begin{proof}
Let $0 \to \Lier \to \Lief \stackrel{\rho} \to \Lieg \to 0$ be a free presentation of $\Lieg$. Let $0 \to \Liem \to \Liep \stackrel{\psi} \to \Lieg \to 0$ is a $\Lie$-stem cover. By Theorem \ref{Lie stem cover} there exists an epimorphism $\beta : \frac{\Lief}{[\Lief, \Lier]_{\Lie}} \to \Liep$ and a two-sided ideal $\Lies$ of $\Lief$ such that $\frac{\Lier}{[\Lief, \Lier]_{\Lie}} \cong {\cal M}^{\Lie}(\Lieg) \oplus \ker(\beta)$ and $\ker(\beta) = \frac{\Lies}{[\Lief, \Lier]_{\Lie}}$. Moreover $\ker(\beta) \cap \left[ \frac{\Lief}{[\Lief, \Lier]_{\Lie}}, \frac{\Lief}{[\Lief, \Lier]_{\Lie}} \right]_{\Lie} = \frac{\Lies}{[\Lief, \Lier]_{\Lie}} \cap \frac{[\Lief, \Lief]_{\Lie}}{[\Lief, \Lier]_{\Lie}} = \frac{\Lies \cap [\Lief, \Lief]_{\Lie}}{[\Lief, \Lier]_{\Lie}} = 0$. Now Propositions 3.20 (b) and 3.5 in \cite{BC} complete the proof.
\end{proof}

\begin{Co} \label{one stem cover}
Any finite-dimensional Leibniz algebra has at least one $\Lie$-cover.
\end{Co}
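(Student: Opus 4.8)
The plan is to combine the ``if'' direction of Theorem~\ref{Lie stem cover} with the fact that a finite-dimensional Leibniz algebra has a finite-dimensional Schur $\Lie$-multiplier (proved exactly as in the absolute Leibniz case; this is also what makes the dimension formulas of this section meaningful). So fix a finite-dimensional $\Lieg$, a free presentation $0\to\Lier\to\Lief\xrightarrow{\rho}\Lieg\to 0$, and write $M={\cal M}^{\Lie}(\Lieg)=\frac{\Lier\cap[\Lief,\Lief]_{\Lie}}{[\Lief,\Lier]_{\Lie}}$, a finite-dimensional subspace of the vector space $\Lier/[\Lief,\Lier]_{\Lie}$. By Theorem~\ref{Lie stem cover} it is enough to produce a two-sided ideal $\Lies$ of $\Lief$ with $[\Lief,\Lier]_{\Lie}\subseteq\Lies\subseteq\Lier$ and $\Lier/[\Lief,\Lier]_{\Lie}=M\oplus(\Lies/[\Lief,\Lier]_{\Lie})$; then $0\to\Lier/\Lies\to\Lief/\Lies\to\Lieg\to 0$ is the required $\Lie$-stem cover and $\Lief/\Lies$ is a $\Lie$-cover of $\Lieg$. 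Note that finite-dimensionality of $\Lieg$ is used only to apply Theorem~\ref{Lie stem cover}.

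To build $\Lies$ I would work in $\overline{\Lief}:=\Lief/[\Lief,\Lier]_{\Lie}$, with its $\Lie$-central two-sided ideal $\overline{\Lier}:=\Lier/[\Lief,\Lier]_{\Lie}$ (by construction $[\overline{\Lief},\overline{\Lier}]_{\Lie}=0$) and with $M=\overline{\Lier}\cap\overline{\Lief}^{\,\rm ann}=\overline{\Lier}\cap\Ker(\overline{\Lief}\twoheadrightarrow\Lief_{\Lie})$, using $[\Lief,\Lief]_{\Lie}=\Lief^{\rm ann}$. First I would observe that $N:=[\overline{\Lief},\overline{\Lier}]$ (the linear span of the brackets $[\bar f,\bar r]$) is itself a two-sided ideal of $\overline{\Lief}$ contained in $\overline{\Lier}$, and that $[\overline{\Lier},\overline{\Lier}]\subseteq N$: indeed $[\overline{\Lief},\overline{\Lier}]\subseteq\overline{\Lier}$, and since $\overline{\Lier}$ is $\Lie$-central one has $[\bar r,\bar f]=-[\bar f,\bar r]$, so both one-sided actions of $\overline{\Lief}$ on $\overline{\Lier}$ land in $N$ and $[\overline{\Lier},\overline{\Lier}]\subseteq[\overline{\Lief},\overline{\Lier}]=N$. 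Consequently $\overline{\Lief}$ acts trivially (on both sides) on $\overline{\Lier}/N$, and hence \emph{every} linear subspace of $\overline{\Lier}$ containing $N$ is a two-sided ideal of $\overline{\Lief}$.

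The heart of the matter is then the disjointness $N\cap M=0$. Since $M=\overline{\Lier}\cap\Ker(\overline{\Lief}\to\Lief_{\Lie})$, this reduces to: if $x\in[\Lief,\Lier]$ (a sum of brackets $[f_i,r_i]$, $f_i\in\Lief$, $r_i\in\Lier$) lies in $\Lief^{\rm ann}$, then $x\in[\Lief,\Lier]_{\Lie}$. Here one exploits that $\Lief_{\Lie}$ is a \emph{free} Lie algebra (the Liezation of a free Leibniz algebra is free), so that the image of $x$ in $\Lief_{\Lie}$ vanishes and one can use that in a free Lie algebra $[a,b]=0$ forces $a,b$ to be linearly dependent. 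For a single bracket the claim is then easy: if $[f,r]\in\Lief^{\rm ann}$ then either $r\in\Lief^{\rm ann}\subseteq Z^r(\Lief)$, whence $[f,r]=0$, or $f=tr+z$ with $t\in\mathbb K$ and $z\in\Lief^{\rm ann}$, whence $[f,r]=t[r,r]+[z,r]$ with $[r,r]=\frac12([r,r]+[r,r])\in[\Lief,\Lier]_{\Lie}$ and $[z,r]=([z,r]+[r,z])-[r,z]=([z,r]+[r,z])\in[\Lief,\Lier]_{\Lie}$ (using $[\Lief,\Lief^{\rm ann}]=0$). The hard part will be upgrading this to arbitrary sums $x=\sum_i[f_i,r_i]$, i.e.\ controlling the kernel of the bracket map $\Lief_{\Lie}\otimes\Lier^{*}\to[\Lief_{\Lie},\Lier^{*}]$ compatibly with $[\Lief,\Lier]_{\Lie}$, where $\Lier^{*}:=\Im(\Lier\to\Lief_{\Lie})$; equivalently, showing that $N$ maps isomorphically onto $[\Lief_{\Lie},\Lier^{*}]$ under $\overline{\Lief}\to\Lief_{\Lie}$.

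Granting $N\cap M=0$, the remainder is bookkeeping: choose a linear complement $\overline{V}/N$ of $(M+N)/N$ inside $\overline{\Lier}/N$, and let $\overline{V}\supseteq N$ be its preimage in $\overline{\Lier}$; by the second paragraph $\overline{V}$ is a two-sided ideal of $\overline{\Lief}$, and $\overline{\Lier}=M\oplus\overline{V}$ since $M\cap N=0$. Taking $\Lies$ to be the preimage of $\overline{V}$ under $\Lief\twoheadrightarrow\overline{\Lief}$ gives a two-sided ideal of $\Lief$ with $[\Lief,\Lier]_{\Lie}\subseteq\Lies\subseteq\Lier$ satisfying conditions (a) and (b) of Theorem~\ref{Lie stem cover}, so that $\Lief/\Lies$ is a $\Lie$-cover of $\Lieg$, as desired. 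The main obstacle is precisely the verification of $N\cap M=0$ in full generality sketched above, which is where the freeness of $\Lief$ (through the free Lie algebra $\Lief_{\Lie}$) is essential.
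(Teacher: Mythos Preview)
The paper's proof is a two-line application of Theorem~\ref{Lie stem cover}: choose a vector-space complement $\Lies/[\Lief,\Lier]_{\Lie}$ of ${\cal M}^{\Lie}(\Lieg)$ inside $\Lier/[\Lief,\Lier]_{\Lie}$ ``for a suitable two-sided ideal $\Lies$ of $\Lief$'', and conclude. It does not pause to justify why such a complement may be taken to come from a two-sided ideal; you are right that this is not automatic in the relative setting, since $\overline{\Lier}=\Lier/[\Lief,\Lier]_{\Lie}$ is only $\Lie$-central (not central) in $\overline{\Lief}$, so an arbitrary linear subspace of $\overline{\Lier}$ need not be an ideal of $\overline{\Lief}$.

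Your remedy---forcing the complement to contain $N=[\overline{\Lief},\overline{\Lier}]$, so that any subspace of $\overline{\Lier}$ above $N$ is automatically a two-sided ideal, and then reducing everything to the single claim $N\cap M=0$---is a clean and correct strategy. The gap is that you do not prove $N\cap M=0$: you dispatch the case of a single bracket $[f,r]\in\Lief^{\rm ann}$ and then explicitly label the extension to arbitrary sums $\sum_i[f_i,r_i]\in\Lief^{\rm ann}$ as ``the hard part'' and ``the main obstacle''. Since the remainder of your argument is routine bookkeeping together with Theorem~\ref{Lie stem cover}, this unproved step is essentially the entire substance of the corollary. Your suggested line of attack (showing that $N$ injects into $[\Lief_{\Lie},\Lier^{*}]$ using the structure of the free Lie algebra $\Lief_{\Lie}$) is plausible but not carried out, so the proposal as written is incomplete---though it is more transparent about where the difficulty lies than the paper's own one-line assertion.
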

\begin{proof}
Let $0 \to \Lier \to \Lief \stackrel{\rho} \to \Lieg \to 0$ be a free presentation of $\Lieg$ and $\frac{\Lies}{[\Lief, \Lier]_{\Lie}}$ be a complement of ${\cal M}^{\Lie}(\Lieg)$ in $\frac{\Lier}{[\Lief, \Lier]_{\Lie}}$, for a suitable two-sided ideal $\Lies$ of $\Lief$. Then $\frac{\Lief}{\Lies}$ is a $\Lie$-cover of $\Lieg$ by Theorem \ref{Lie stem cover}.
\end{proof}

%\begin{Th}
%Let  $0 \to \Liem \to \Liep \stackrel{\psi} \to \Lieg \to 0$ be a $\Lie$-stem extension of a finite-dimensional Leibniz algebra $\Lieg$. Then there is a $\Lie$-cover $\Lieg^{\ast}$ of $\Lieg$ such that $\Liep$ is a homomorphic image of $\Lieg^{\ast}$.
%\end{Th}
%\begin{proof}
%Let $0 \to \Lier \to \Lief \stackrel{\pi} \to \Lieg \to 0$ be a free presentation of $\Lieg$. From Theorem \ref{Lie stem cover} we have the following commutative diagram:
%\[
%\xymatrix{
%0 \ar[r] & \frac{\Lier}{[\Lief, \Lier]_{\Lie}} \ar[r] \ar[d]^{\beta_{\mid}} & \frac{\Lief}{[\Lief, \Lier]_{\Lie}} \ar[r]^{\overline{\pi}} \ar[d]^{\beta} & \Lieg \ar[r] \ar@{=}[d] & 0\\
%0 \ar[r] & \Liem \ar[r] & \Liep \ar[r]^{\psi} & \Lieg \ar[r] & 0
%}
%\]
%$\ker(\beta) = \frac{\Lies}{[\Lief, \Lier]_{\Lie}}$ , for some two-sided ideal $\Lies$ of $\Lier$. Moreover, from the proof of Theorem \ref{Lie stem cover}, the following are isomorphisms
%$$\frac{\Lier}{\Lies} \cong \frac{\Lier/[\Lief, \Lier]_{\Lie}}{\Lies/[\Lief, \Lier]_{\Lie}} \cong  \Liem \cong \frac{\left( \Lier \cap [\Lief, \Lief]_{\Lie} \right)/[\Lief, \Lier]_{\Lie}}{\left( \Lies \cap [\Lief, \Lief]_{\Lie} \right)/[\Lief, \Lier]_{\Lie}} \cong \frac{\Lier \cap [\Lief, \Lief]_{\Lie}}{\Lies \cap [\Lief, \Lief]_{\Lie}} \cong \frac{\left( \Lier \cap [\Lief, \Lief]_{\Lie} \right) + \Lies}{\Lies}$$
%and since $\Liem$ is finite-dimensional, then $\left( \Lier \cap [\Lief, \Lief]_{\Lie} \right) + \Lies \cong \Lier$.
%\end{proof}

\begin{Le} \label{epimorphism}
Let $\Lieg$ be a Leibniz algebra and
\[
\xymatrix{
0 \ar[r] & \Liem_1 \ar[r] \ar[d]^{\alpha} & \Liep_1 \ar[r] \ar[d]^{\beta} & \Lieg \ar[r] \ar[d]^{\gamma} & 0\\
0 \ar[r] & \Liem_2 \ar[r] & \Liep_2 \ar[r] & \Lieg \ar[r] & 0
}
\]
\end{Le}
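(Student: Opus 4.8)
The content of the lemma being that $\beta$ is an epimorphism, the plan is a short diagram chase resting on the two structural features present here: the bottom row is a $\Lie$-central extension, so $\Liem_2\subseteq Z_{\Lie}(\Liep_2)$, and the top row is a $\Lie$-stem cover, so $\Liem_1\subseteq\Liep_1^{\rm ann}$ while $\beta$ is compatible with the projections onto $\Lieg$ and with the inclusions $\Liem_i\hookrightarrow\Liep_i$. First I would extract the cheap half: commutativity of the right-hand square together with surjectivity of $\gamma$ and of $\Liep_2\twoheadrightarrow\Lieg$ forces $\im(\beta)$ to surject onto $\Lieg$, whence $\Liep_2=\im(\beta)+\Liem_2$.

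The heart of the argument is to show $\Liep_2^{\rm ann}\subseteq\im(\beta)$. Take $p\in\Liep_2$ and write $p=b+m$ with $b\in\im(\beta)$ and $m\in\Liem_2$; then
\[
[p,p]=[b,b]+\big([b,m]+[m,b]\big)+[m,m].
\]
The middle bracket vanishes because $m\in Z_{\Lie}(\Liep_2)$, and $[m,m]=0$ because $2[m,m]=[m,m]+[m,m]=0$ while $\tfrac12\in\mathbb{K}$; hence $[p,p]=[b,b]\in\im(\beta)$, so every generator of $\Liep_2^{\rm ann}$, and therefore $\Liep_2^{\rm ann}$ itself, lies in $\im(\beta)$ (this is precisely the computation already used inside the proof of Theorem \ref{Lie stem cover}). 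Since $\beta$ is a homomorphism it sends $\Liep_1^{\rm ann}$ into $\Liep_2^{\rm ann}$, and commutativity of the left square gives $\beta|_{\Liem_1}=\alpha$ followed by the inclusion; combining these with $\Liem_1\subseteq\Liep_1^{\rm ann}$ yields $\alpha(\Liem_1)=\beta(\Liem_1)\subseteq\Liep_2^{\rm ann}\subseteq\im(\beta)$. Using that $\Liem_2\subseteq\Liep_2^{\rm ann}$ (i.e.\ the bottom row is a $\Lie$-stem extension, which the hypotheses provide — either directly or because $\alpha$ is onto $\Liem_2$), one gets $\Liem_2\subseteq\im(\beta)$, and together with $\Liep_2=\im(\beta)+\Liem_2$ this gives $\Liep_2=\im(\beta)$, so $\beta$ is an epimorphism. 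If in addition one wants $\alpha$ to be an epimorphism, restrict $\beta$ to $\Liem_1$ and apply the snake lemma to the two rows against the isomorphism $\gamma$: then $\Coker(\alpha)\hookrightarrow\Coker(\beta)=0$.

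The step I expect to be the genuine obstacle is pinning down that $\Liem_2$, and not merely $\alpha(\Liem_1)$, sits inside $\im(\beta)$: this is exactly where the "$\Lie$-stem cover" hypothesis on the top row (rather than a bare "$\Lie$-central extension") must be used, and one has to make sure the argument invokes that distinction instead of assuming the conclusion. Everything else — the $[b+m,b+m]$ identity and the snake/five-lemma chase for the passage from $\beta$ surjective to $\alpha$ surjective — is formal.
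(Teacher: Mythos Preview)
You have misread the hypotheses of the lemma. The statement assumes nothing about the top row beyond its being a short exact sequence; the only structural hypothesis is that the \emph{bottom} row is a $\Lie$-stem extension (hence $\Liem_2\subseteq\Liep_2^{\rm ann}$ and $\Liem_2\subseteq Z_{\Lie}(\Liep_2)$). Your opening sentence swaps these roles, putting a $\Lie$-stem cover hypothesis on the top row and only a $\Lie$-central hypothesis on the bottom, and your final paragraph repeats this confusion when you say the ``genuine obstacle'' is where the $\Lie$-stem cover hypothesis on the top row ``must be used''. In fact no hypothesis on the top row is needed at all, and the whole paragraph routing through $\Liem_1\subseteq\Liep_1^{\rm ann}$ and $\alpha(\Liem_1)\subseteq\Liep_2^{\rm ann}$ is dead weight: the inclusion $\alpha(\Liem_1)=\beta(\Liem_1)\subseteq\im(\beta)$ is a tautology and contributes nothing.

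Once you strip that out, what remains is correct and is essentially the paper's proof. You get $\Liep_2=\im(\beta)+\Liem_2$ from surjectivity of $\gamma$; you show $\Liep_2^{\rm ann}\subseteq\im(\beta)$ via $[b+m,b+m]=[b,b]$; and then you invoke $\Liem_2\subseteq\Liep_2^{\rm ann}$ (which is exactly the $\Lie$-stem hypothesis on the bottom row, and you do eventually say so) to conclude $\Liem_2\subseteq\im(\beta)$. The paper does the same thing, only it passes through the slightly larger ideal $[\Liep_2,\Liep_2]_{\Lie}$ rather than $\Liep_2^{\rm ann}$: from $\Liep_2=\im(\beta)+\Liem_2$ and $\Liem_2\subseteq Z_{\Lie}(\Liep_2)$ it gets $[\Liep_2,\Liep_2]_{\Lie}=[\im(\beta),\im(\beta)]_{\Lie}\subseteq\im(\beta)$, then uses $\Liem_2\subseteq\Liep_2^{\rm ann}\subseteq[\Liep_2,\Liep_2]_{\Lie}$. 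So the mathematics is the same; you just need to state the hypotheses correctly and delete the irrelevant detour through the top row.
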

be a commutative diagram of short exact sequences of Leibniz algebras such that the bottom row is a $\Lie$-stem extension. If the homomorphism $\gamma$ is surjective, then $\beta$ is a surjective homomorphism as well.

\begin{proof}
Obviously $\Liep_2 = \im(\beta) + \Liem_2$. Hence $[\Liep_2, \Liep_2]_{\Lie} = [\im(\beta),\im(\beta)]_{\Lie}$.
By \cite[Proposition 5 (e)]{CKh}, $\Liem_2 \subseteq \Liep_2^{\rm ann} \subseteq [\Liep_2, \Liep_2]_{\Lie}  = [\im(\beta),\im(\beta)]_{\Lie}$.
Therefore  $\Liep_2 \subseteq \im(\beta) +  [\im(\beta),\im(\beta)]_{\Lie}$, i.e. $\beta$ is surjective.
\end{proof}

\begin{Le}
Let $0 \to \Lier \to \Lief \stackrel{\rho} \to \Lieg \to 0$ be a free presentation of a Leibniz algebra $\Lieg$. Then every $\Lie$-stem extension of $\Lieg$ is epimorphic image of $\frac{\Lief}{[\Lief, \Lier]_{\Lie}}$.
\end{Le}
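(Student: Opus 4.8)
The plan is to reduce the statement to the two lemmas already proved just above, namely Lemma~\ref{lema 1} and Lemma~\ref{epimorphism}. Recall that a $\Lie$-stem extension of $\Lieg$ is a $\Lie$-central extension $0 \to \Liem \to \Liep \stackrel{\psi}\to \Lieg \to 0$ with $\Liep_{\Lie} \cong \Lieg_{\Lie}$, equivalently (as recalled at the beginning of this section) with $\Liem \subseteq \Liep^{\rm ann}$. So fix such an extension. The first step is to apply Lemma~\ref{lema 1} to the given free presentation $0 \to \Lier \to \Lief \stackrel{\rho}\to \Lieg \to 0$ and to the $\Lie$-central extension $0 \to \Liem \to \Liep \stackrel{\psi}\to \Lieg \to 0$, taking $\alpha = \mathrm{id}_{\Lieg}$. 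This produces a homomorphism $\beta : \frac{\Lief}{[\Lief, \Lier]_{\Lie}} \to \Liep$ with $\psi \circ \beta = \overline{\rho}$ and $\beta\!\left(\frac{\Lier}{[\Lief, \Lier]_{\Lie}}\right) \subseteq \Liem$, where $\overline{\rho}$ is the epimorphism induced by $\rho$.

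The second step is to observe that $\beta$ fits into a commutative diagram of short exact sequences to which Lemma~\ref{epimorphism} applies. Since $[\Lief, \Lier]_{\Lie} \subseteq \Lier$ (as $\Lier$ is a two-sided ideal), the sequence $0 \to \frac{\Lier}{[\Lief, \Lier]_{\Lie}} \to \frac{\Lief}{[\Lief, \Lier]_{\Lie}} \stackrel{\overline{\rho}}\to \Lieg \to 0$ is exact, and together with $0 \to \Liem \to \Liep \stackrel{\psi}\to \Lieg \to 0$ and the vertical maps $\beta_{\mid}$, $\beta$, $\mathrm{id}_{\Lieg}$ it forms the commutative ladder required by Lemma~\ref{epimorphism}. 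The bottom row is a $\Lie$-stem extension and the right-hand vertical map $\mathrm{id}_{\Lieg}$ is surjective, so Lemma~\ref{epimorphism} gives that $\beta$ is surjective. Hence $\Liep$ is an epimorphic image of $\frac{\Lief}{[\Lief, \Lier]_{\Lie}}$, which is the claim.

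There is essentially no genuine obstacle here; the content has been front-loaded into Lemmas~\ref{lema 1} and~\ref{epimorphism}. The only point requiring a moment's care is checking the hypotheses of Lemma~\ref{epimorphism} — in particular that the top row is honestly short exact after dividing by $[\Lief, \Lier]_{\Lie}$ — after which the crucial input is the stem condition $\Liem \subseteq \Liep^{\rm ann} \subseteq [\Liep, \Liep]_{\Lie} = [\im(\beta), \im(\beta)]_{\Lie}$, forcing $\Liem \subseteq \im(\beta)$ and hence $\Liep = \im(\beta) + \Liem = \im(\beta)$.
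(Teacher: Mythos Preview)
Your proof is correct and follows essentially the same route as the paper: apply Lemma~\ref{lema 1} with $\alpha = \mathrm{id}_{\Lieg}$ to produce the morphism $\beta$ and the commutative ladder, then invoke Lemma~\ref{epimorphism} to conclude that $\beta$ is surjective. The extra care you take in verifying exactness of the top row and in unwinding the stem condition $\Liem \subseteq \Liep^{\rm ann}$ is fine but not strictly needed, since the former is immediate and the latter is exactly the content of Lemma~\ref{epimorphism}.
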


\begin{proof}
Given a $\Lie$-stem extension $0 \to \Liem \to \Liep  \to \Lieg \to 0$, then Lemma \ref{lema 1} provides the following commutative diagram:
\[
\xymatrix{
0 \ar[r] & \frac{\Lier}{[\Lief, \Lier]_{\Lie}} \ar[r] \ar[d]^{\beta_{\mid}} & \frac{\Lief}{[\Lief, \Lier]_{\Lie}} \ar[r]^{\overline{\rho}} \ar[d]^{\beta} & \Lieg \ar[r] \ar@{=}[d] & 0\\
0 \ar[r] & \Liem \ar[r] & \Liep \ar[r] & \Lieq \ar[r] & 0
}
\]
Lemma \ref{epimorphism} implies that $\beta$ is surjective.
\end{proof}

\begin{Th} \label{hopfian}
Let $\Lieg$ be a Leibniz algebra such that ${\cal M}^{\Lie}(\Lieg)$ is finite-dimensional and let $0 \to \Liem_i \to \Liep_i \stackrel{\psi_i}\to  \Lieg \to 0, i=1, 2$, be two $\Lie$-stem covers of $\Lieg$. If $\eta : \Liep_1 \to \Liep_2$ is an epimorphism such that $\eta(\Liem_1) \subseteq \Liem_2$, the $\eta$ is an isomorphism.
\end{Th}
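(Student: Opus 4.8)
The plan is to prove that $\eta$ is injective (surjectivity is assumed), and the crux is that for a $\Lie$-stem cover the kernel is a copy of the Schur $\Lie$-multiplier, so that $\Liem_1$ and $\Liem_2$ are finite-dimensional \emph{of the same dimension}; once $\ker(\eta)$ is located inside $\Liem_1$, the assertion follows from a dimension count.

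First I would extract the consequences of $\eta$ being a comparison map between the two $\Lie$-stem covers of $\Lieg$. From $\eta(\Liem_1)\subseteq\Liem_2$ one gets an induced epimorphism $\bar\eta:\Lieg\cong\Liep_1/\Liem_1\twoheadrightarrow\Liep_2/\Liem_2\cong\Lieg$ with $\psi_2\circ\eta=\bar\eta\circ\psi_1$, which is an isomorphism of $\Lieg$ (indeed the identity when $\eta$ is viewed as a morphism over $\Lieg$). I would use this in two ways: on one hand $\ker(\eta)\subseteq\Liem_1$, since $\eta(p_1)=0$ forces $\psi_1(p_1)=\bar\eta^{-1}\psi_2(\eta(p_1))=0$; on the other hand $\eta$ restricts to a surjection $\eta_{|}:\Liem_1\twoheadrightarrow\Liem_2$, since for $m_2\in\Liem_2$ any preimage $p_1$ of $m_2$ under the surjection $\eta$ satisfies $\psi_1(p_1)=\bar\eta^{-1}\psi_2(m_2)=0$, i.e. $p_1\in\Liem_1$.

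Next I would invoke the characterization of $\Lie$-stem covers recalled at the beginning of Section~\ref{stem cover} (equivalently \cite[Proposition~6]{CKh}): for a $\Lie$-stem cover the connecting morphism $\theta$ of the six-term sequence~(\ref{six-term}) is an isomorphism, hence $\Liem_i\cong HL^{\Lie}_2(\Lieg)={\cal M}^{\Lie}(\Lieg)$ for $i=1,2$. By hypothesis ${\cal M}^{\Lie}(\Lieg)$ is finite-dimensional, so $\dim\Liem_1=\dim\Liem_2=\dim{\cal M}^{\Lie}(\Lieg)<\infty$. Since $\ker(\eta)\subseteq\Liem_1$, the surjection $\eta_{|}$ gives $\Liem_2\cong\Liem_1/\ker(\eta)$, whence $\dim\ker(\eta)=\dim\Liem_1-\dim\Liem_2=0$; thus $\eta$ is injective and therefore an isomorphism.

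The step I expect to be the main obstacle is the first one, namely controlling the map $\bar\eta$ induced on the quotients $\Liep_i/\Liem_i\cong\Lieg$: everything below — both $\ker(\eta)\subseteq\Liem_1$ and the surjectivity of $\eta_{|}$ — rests on $\bar\eta$ being an isomorphism, and, in the absence of finite-dimensionality of $\Lieg$ itself, this is exactly the point where one must use that $\eta$ is a morphism of $\Lie$-stem covers of $\Lieg$ rather than merely an arbitrary epimorphism with $\eta(\Liem_1)\subseteq\Liem_2$. The remaining ingredient, the identification $\Liem_i\cong{\cal M}^{\Lie}(\Lieg)$, is the standard characterization of $\Lie$-stem covers and, once recalled, reduces the theorem to the one-line dimension argument above.
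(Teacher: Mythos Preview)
Your argument is correct and considerably more elementary than the paper's. The paper routes everything through a free presentation: it invokes Theorem~\ref{Lie stem cover} to write $\Liep_i\cong\Lief/\Lies_i$ and $\Liem_i\cong\Lier/\Lies_i$, lifts $\eta$ to a map $\Lief/[\Lief,\Lier]_{\Lie}\to\Liep_2$, locates its kernel between two finite-codimension complements of ${\cal M}^{\Lie}(\Lieg)$ inside $\Lier/[\Lief,\Lier]_{\Lie}$, and concludes by comparing codimensions. You reach the same endpoint---a finite-dimension count forcing $\ker(\eta)=0$---directly from the characterization $\Liem_i\cong{\cal M}^{\Lie}(\Lieg)$, without ever touching the free presentation. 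What the paper's detour buys is nothing extra for this statement; your route is strictly shorter.

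On the obstacle you flagged: you are right that everything hinges on $\bar\eta:\Lieg\to\Lieg$ being an isomorphism, and that the bare hypotheses (only $\eta(\Liem_1)\subseteq\Liem_2$) do not give this unless $\Lieg$ is Hopfian. The paper's proof has the \emph{same} hidden assumption: its key step asserts $\theta(\Liet/[\Lief,\Lier]_{\Lie})=\eta(\delta'(\Liet/[\Lief,\Lier]_{\Lie}))$, i.e.\ $\theta=\eta\circ\delta'$, but $\theta$ was constructed with $\psi_2\circ\theta=\bar\rho$ while $\psi_2\circ\eta\circ\delta'=\bar\eta\circ\bar\rho$, so this equality tacitly uses $\bar\eta=\mathrm{id}$. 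Thus the paper is implicitly reading $\eta$ as a morphism of extensions over $\Lieg$ (i.e.\ $\psi_2\circ\eta=\psi_1$); with that reading your proof is complete as written, and your remark that this is ``the point where one must use that $\eta$ is a morphism of $\Lie$-stem covers'' is exactly on target.
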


\begin{proof}
Let $0 \to \Lier \to \Lief \stackrel{\rho} \to \Lieg \to 0$ be a free presentation of $\Lieg$. By Theorem \ref{Lie stem cover} there exist two-sided ideals $\Lies_i, i = 1,2$, of $\Lief$ such that $\Liep_i \cong \frac{\Lief}{\Lies_i}; \Liem_i \cong \frac{\Lier}{\Lies_i}$ and $\frac{\Lier}{[\Lief, \Lier]_{\Lie}} \cong {\cal M}^{\Lie}(\Lieg) \oplus \frac{\Lies_i}{[\Lief, \Lier]_{\Lie}}, i = 1, 2$.

By Lemmas \ref{lema 1} and \ref{epimorphism} and the proof of Theorem \ref{Lie stem cover}, there exists an epimorphism $\theta :  \frac{\Lief}{[\Lief, \Lier]_{\Lie}} \to \Liep_2 \cong \frac{\Lief}{\Lies_2}$ such that $\ker(\theta) =  \frac{\Lies_2}{[\Lief, \Lier]_{\Lie}}$.

Since $\Lief$ is a free Leibniz algebra, then there exists a homomorphism $\overline{\delta} : \Lief \to \Liep_1$ such that $\psi_1 \circ \overline{\delta} = \pi$. Moreover $\overline{\delta}(\Lier) \subseteq \Liem_1$ and $\overline{\delta}$ vanishes on $[\Lief, \Lier]_{\Lie}$, consequently it induces a homomorphism $\delta' : \frac{\Lief}{[\Lief, \Lier]_{\Lie}} \to \Liep_1 \cong \frac{\Lief}{\Lies_1}$ such that $\delta' \circ pr = \overline{\delta}$, where $pr : \Lief \to  \frac{\Lief}{[\Lief, \Lier]_{\Lie}}$ is the canonical projection. Since $\psi_1 \circ \delta' = \overline{\pi}$, then Lemma \ref{epimorphism} implies that $\delta'$ is an epimorphism. Let $\ker(\delta') = \frac{\Liet}{[\Lief, \Lier]_{\Lie}}$ for some two-sided ideal $\Liet$ of $\Lier$.

Since $\theta \left(\frac{\Liet}{[\Lief, \Lier]_{\Lie}} \right) =  \eta \left(\delta' \left( \frac{\Liet}{[\Lief, \Lier]_{\Lie}} \right) \right) = 0$, then $\frac{\Liet}{[\Lief, \Lier]_{\Lie}} \subseteq \ker(\theta) = \frac{\Lies_2}{[\Lief, \Lier]_{\Lie}}$, therefore $\Liet \subseteq \Lies_2$.

From the following diagram
\[
\xymatrix{
  & \frac{\Liet}{[\Lief, \Lier]_{\Lie}} \ar@{>->}[d]^{j} \ar[ld]^{\sigma} & \\
\frac{\Lies_1}{[\Lief, \Lier]_{\Lie}}  \ar@{>->}[r]^i \ar@<1ex>[ur]^{\tau}& \frac{\Lier}{[\Lief, \Lier]_{\Lie}}  \ar@{>>}[r]^{\beta_{\mid}} \ar@{>>}[d]^{\delta'}& \Liem_1\\
 & \Liem_1&
}
\]
it follows that $\frac{\Lies_1}{[\Lief, \Lier]_{\Lie}} \cong \frac{\Liet}{[\Lief, \Lier]_{\Lie}}$ and, by Theorem \ref{Lie stem cover}, we have
${\cal M}^{\Lie}(\Lieg) \oplus \frac{\Lies_2}{[\Lief, \Lier]_{\Lie}} \cong  \frac{\Lier}{[\Lief, \Lier]_{\Lie}} \cong {\cal M}^{\Lie}(\Lieg) \oplus \frac{\Liet}{[\Lief, \Lier]_{\Lie}}$, which implies that $\Lies_2 \cong \Liet$. Since $\ker(\eta) \cong \frac{\Lies_2}{\Liet}$, then $\eta$ is an isomorphism.
\end{proof}

\begin{Co}
Every $\Lie$-stem cover of a Leibniz algebra $\Lieg$ with trivial $\Lie$-commu\-tator and finite-dimensional Schur $\Lie$-multiplier is Hopfian, that is, every epimorphism is an isomorphism.
\end{Co}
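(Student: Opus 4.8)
The plan is to deduce the statement from Theorem \ref{hopfian}, so that almost all of the work has already been done. Fix a $\Lie$-stem cover $0 \to \Liem \to \Liep \stackrel{\psi}\to \Lieg \to 0$ of our algebra $\Lieg$, where $[\Lieg,\Lieg]_{\Lie}=0$ and ${\cal M}^{\Lie}(\Lieg)$ is finite-dimensional, and let $\eta : \Liep \twoheadrightarrow \Liep$ be an epimorphism; we must show that $\eta$ is injective. Applying Theorem \ref{hopfian} with $\Liep_1=\Liep_2=\Liep$, $\Liem_1=\Liem_2=\Liem$ and $\psi_1=\psi_2=\psi$, the only hypothesis that needs checking is $\eta(\Liem)\subseteq\Liem$, after which Theorem \ref{hopfian} immediately yields that $\eta$ is an isomorphism.

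The crucial point is that for a $\Lie$-stem cover of an algebra with trivial $\Lie$-commutator the ideal $\Liem$ is intrinsically determined, namely $\Liem=\Liep^{\rm ann}=[\Liep,\Liep]_{\Lie}$. One inclusion comes from surjectivity of $\psi$: applying $\psi$ to a generator $[p,q]+[q,p]$ of $[\Liep,\Liep]_{\Lie}$ lands in $[\Lieg,\Lieg]_{\Lie}=0$, so $[\Liep,\Liep]_{\Lie}\subseteq\Ker(\psi)=\Liem$. For the reverse chain, a $\Lie$-stem cover is in particular a $\Lie$-stem extension, so $\Liem\subseteq\Liep^{\rm ann}$ by \cite[Proposition 5]{CKh}; and since $\frac{1}{2}\in\mathbb{K}$, each generator $[p,p]$ of $\Liep^{\rm ann}$ equals $\frac{1}{2}\bigl([p,p]+[p,p]\bigr)\in[\Liep,\Liep]_{\Lie}$, hence $\Liep^{\rm ann}\subseteq[\Liep,\Liep]_{\Lie}$. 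Combining, $\Liem\subseteq\Liep^{\rm ann}\subseteq[\Liep,\Liep]_{\Lie}\subseteq\Liem$, so all three coincide.

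With this identification the conclusion is immediate: $\eta$ is a homomorphism of Leibniz algebras, so $\eta(\Liem)=\eta\bigl([\Liep,\Liep]_{\Lie}\bigr)\subseteq[\Liep,\Liep]_{\Lie}=\Liem$, and Theorem \ref{hopfian} (whose finite-dimensionality hypothesis on ${\cal M}^{\Lie}(\Lieg)$ is satisfied) shows $\eta$ is an isomorphism. There is essentially no obstacle here; the one genuinely mathematical step is the identification $\Liem=[\Liep,\Liep]_{\Lie}$, which is a short argument combining the characterization of $\Lie$-stem extensions with the hypotheses $[\Lieg,\Lieg]_{\Lie}=0$ and $\frac{1}{2}\in\mathbb{K}$, and once it is in place the corollary is a one-line consequence of Theorem \ref{hopfian}.
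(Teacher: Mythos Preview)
Your proof is correct and follows essentially the same approach as the paper: both identify the kernel $\Liem$ with $[\Liep,\Liep]_{\Lie}$ (using that $[\Lieg,\Lieg]_{\Lie}=0$ and that $\Lie$-stem extensions satisfy $\Liem\subseteq\Liep^{\rm ann}$), observe that any surjective endomorphism therefore sends $\Liem$ into $\Liem$, and then invoke Theorem~\ref{hopfian}. Your write-up is in fact more explicit than the paper's, which states the identification $\Liem=[\Lieg^{\ast},\Lieg^{\ast}]_{\Lie}$ without spelling out the argument.
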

\begin{proof}
Let $0 \to \Lien \to \Lieg^{\ast} \to \Lieg \to 0$ be a $\Lie$-stem cover. Then there exists a two-sided ideal $\Liem$ of $\Lieg^{\ast}$ such that $\Liem = [\Lieg^{\ast}, \Lieg^{\ast}]_{\Lie}$ and $\Lieg \cong {\Lieg^{\ast}}/{\Liem}$.

Now, if $\eta : \Lieg^{\ast} \to \Lieg^{\ast}$ is an epimorphism, then $\eta(\Lien) = \Liem$. By Theorem \ref{hopfian}, $\eta$ is an isomorphism.
\end{proof}

\begin{Pro}
Let $0 \to \Liem_i \to \Liep_i \stackrel{\psi_i}\to  \Lieg \to 0, i=1, 2$, be two $\Lie$-stem covers of a finite-dimensional Leibniz algebra with finite-dimensional Schur $\Lie$-multiplier $\Lieg$. Then $Z_{\Lie}(\Liep_1)/\Liem_1 \cong Z_{\Lie}(\Liep_2)/\Liem_2$.
\end{Pro}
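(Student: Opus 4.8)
The plan is to push both $\Lie$-stem covers back to one fixed free presentation and to notice that, although the two covers arise from different complements, the $\Lie$-centralizer that controls the $\Lie$-center inside the free algebra is insensitive to this choice. Fix a free presentation $0 \to \Lier \to \Lief \stackrel{\rho}\to \Lieg \to 0$. By Theorem \ref{Lie stem cover}, for $i = 1, 2$ there is a two-sided ideal $\Lies_i$ of $\Lief$ with $[\Lief,\Lier]_{\Lie} \subseteq \Lies_i \subseteq \Lier$, together with isomorphisms $\Liep_i \cong \Lief/\Lies_i$, $\Liem_i \cong \Lier/\Lies_i$, such that inside $\Lier/[\Lief,\Lier]_{\Lie}$ the subspace $\Lies_i/[\Lief,\Lier]_{\Lie}$ meets the canonical copy of ${\cal M}^{\Lie}(\Lieg) = (\Lier\cap[\Lief,\Lief]_{\Lie})/[\Lief,\Lier]_{\Lie}$ trivially. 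Since $\Lies_i \subseteq \Lier$, this forces the key identity
$$\Lies_i \cap [\Lief,\Lief]_{\Lie} = [\Lief,\Lier]_{\Lie}, \qquad i = 1, 2,$$
so that the ``bracket part'' of the complement $\Lies_i$ does not depend on $i$.

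Next I would translate $Z_{\Lie}(\Liep_i)/\Liem_i$ into a subquotient of $\Lief$. Under $\Liep_i \cong \Lief/\Lies_i$ a class $f + \Lies_i$ lies in $Z_{\Lie}(\Lief/\Lies_i)$ exactly when $[f,x]+[x,f] \in \Lies_i$ for all $x \in \Lief$, i.e.\ when $f \in C_{\Lief}^{\Lie}(\Lief,\Lies_i)$, which is a two-sided ideal of $\Lief$ by Lemma \ref{Lemma3.3}; hence $Z_{\Lie}(\Liep_i)$ corresponds to $C_{\Lief}^{\Lie}(\Lief,\Lies_i)/\Lies_i$. From $[\Lief,\Lier]_{\Lie} \subseteq \Lies_i$ we get $[\Lier,\Lief]_{\Lie} \subseteq \Lies_i$, so $\Lier \subseteq C_{\Lief}^{\Lie}(\Lief,\Lies_i)$ and $\Liem_i \cong \Lier/\Lies_i$ indeed sits inside $Z_{\Lie}(\Liep_i)$; the isomorphism theorems then give
$$Z_{\Lie}(\Liep_i)/\Liem_i \cong \big(C_{\Lief}^{\Lie}(\Lief,\Lies_i)/\Lies_i\big)\big/\big(\Lier/\Lies_i\big) \cong C_{\Lief}^{\Lie}(\Lief,\Lies_i)/\Lier.$$

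It then remains to remove the dependence on $i$. For all $f, x \in \Lief$ the element $[f,x]+[x,f]$ lies in $[\Lief,\Lief]_{\Lie}$, so $[f,x]+[x,f] \in \Lies_i$ if and only if $[f,x]+[x,f] \in \Lies_i \cap [\Lief,\Lief]_{\Lie} = [\Lief,\Lier]_{\Lie}$ by the key identity; consequently $C_{\Lief}^{\Lie}(\Lief,\Lies_1) = C_{\Lief}^{\Lie}(\Lief,[\Lief,\Lier]_{\Lie}) = C_{\Lief}^{\Lie}(\Lief,\Lies_2)$. Combining with the previous paragraph, $Z_{\Lie}(\Liep_1)/\Liem_1 \cong C_{\Lief}^{\Lie}(\Lief,[\Lief,\Lier]_{\Lie})/\Lier \cong Z_{\Lie}(\Liep_2)/\Liem_2$, where the middle term depends only on the chosen free presentation. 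The one substantive point is the key identity extracted from Theorem \ref{Lie stem cover}; everything else is bookkeeping with two-sided ideals and quotients, the finite-dimensionality of $\Lieg$ and of ${\cal M}^{\Lie}(\Lieg)$ being used only to invoke that theorem and hence secure the complements $\Lies_i$.
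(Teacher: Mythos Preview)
Your proof is correct and follows essentially the same route as the paper's. The paper introduces $\Liet$ via $Z_{\Lie}\big(\Lief/[\Lief,\Lier]_{\Lie}\big) = \Liet/[\Lief,\Lier]_{\Lie}$ and shows, using the same identity $\Lies\cap[\Lief,\Lief]_{\Lie}\subseteq[\Lief,\Lier]_{\Lie}$, that $Z_{\Lie}(\Lief/\Lies)=\Liet/\Lies$ and hence $Z_{\Lie}(\Lieg^{\ast})/\Liem\cong\Liet/\Lier$; your $C_{\Lief}^{\Lie}\big(\Lief,[\Lief,\Lier]_{\Lie}\big)$ is exactly this $\Liet$, and your observation that $C_{\Lief}^{\Lie}(\Lief,\Lies_i)=C_{\Lief}^{\Lie}\big(\Lief,[\Lief,\Lier]_{\Lie}\big)$ is the centralizer reformulation of their two-sided inclusion $Z_{\Lie}(\Lief/\Lies)=\Liet/\Lies$.
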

\begin{proof}
Let $0 \to \Lier \to \Lief \stackrel{\rho} \to \Lieg \to 0$ be a free presentation of $\Lieg$. By Corollary \ref{one stem cover} there exists a $\Lie$-cover $\Lieg^{\ast}$ of $\Lieg$, i.e. there is an exact sequence  $0 \to \Liem \to \Lieg^{\ast} \stackrel{\psi}\to  \Lieg \to 0$ such that $\Liem \subseteq Z_{\Lie}(\Lieg^{\ast}) \cap {\Lieg^{\ast}}^{\rm ann}$ and $ \Liem \cong {\cal M}^{\Lie}(\Lieg)$ (see \cite[Propositions 5 and 6]{CKh}).

By Theorem \ref{Lie stem cover}, there exists a two-sided ideal $\Lies$ such that $\Lieg^{\ast} \cong \frac{\Lief}{\Lies}$, $\Liem \cong \frac{\Lier}{\Lies}$ and $\frac{\Lief}{[\Lief, \Lier]_{\Lie}} \cong {\cal M}^{\Lie}(\Lieg) \oplus \frac{\Lies}{[\Lief, \Lier]_{\Lie}}$. Put $Z_{\Lie}(\frac{\Lief}{[\Lief, \Lier]_{\Lie}}) = \frac{\Liet}{[\Lief, \Lier]_{\Lie}}$, then $[\Lief, \Liet]_{\Lie} \subseteq [\Lief, \Lier]_{\Lie}$, thus $\frac{\Liet}{\Lies} \subseteq Z_{\Lie}(\frac{\Lief}{\Lies})$.

Conversely, for $x + \Lies \in Z_{\Lie}(\frac{\Lief}{\Lies})$, we must show that $x + \Lies \in \frac{\Liet}{\Lies}$.

 Indeed, for any $f + \Lies \in \frac{\Lief}{\Lies}$, $[x + \Lies, f+\Lies]+[f+\Lies, x+\Lies] = 0$, hence $[x,f]+[f,x] \in \Lies \cap [\Lief,\Lief]_{\Lie}$, for any $f \in \Lief$.

 To show that $x \in \Liet$ it is enough to prove that $x +[\Lief,\Lier]_{\Lie} \in Z_{\Lie}(\frac{\Lief}{[\Lief, \Lier]_{\Lie}}) = \frac{\Liet}{[\Lief, \Lier]_{\Lie}}$. But this fact holds since for any $f \in \Lief, [x,f]+[f,x] + [\Lief, \Lier]_{\Lie} = \overline{0}$, because $[x,f]+[f,x] \in \Lies \cap [\Lief, \Lief]_{\Lie}$ and by Theorem \ref{Lie stem cover} $\frac{\Lief}{[\Lief, \Lier]_{\Lie}} \cong \frac{\Lier \cap [\Lief, \Lief]_{\Lie}}{[\Lief, \Lier]_{\Lie}} \oplus \frac{\Lies}{[\Lief, \Lier]_{\Lie}}$, hence $\Lier \cap [\Lief, \Lief]_{\Lie} \cap \Lies \subseteq [\Lief, \Lier]_{\Lie}$, but $\Lies \subseteq \Lier$, then $\Lies \cap [\Lief, \Lief]_{\Lie} \subseteq [\Lief, \Lier]_{\Lie}$.

 Consequently, $\frac{\Liet}{\Lies} \cong Z_{\Lie}\left( \frac{\Lief}{\Lies} \right)$. From here $\frac{Z_{\Lie}\left( \Lieg^{\ast} \right)}{\Liem} \cong \frac{Z_{\Lie}\left( \Lief/ \Lies \right)}{\Lier/\Lies} \cong \frac{\Liet / \Lies}{\Lier / \Lies} \cong \frac{\Liet}{\Lier}$.

Applying this result to each $\Lie$-stem cover, we have $\frac{Z_{\Lie}(\Liep_1)}{\Liem_1} \cong \frac{\Liet}{\Lier} \cong  \frac{Z_{\Lie}(\Liep_2)}{\Liem_2}$.
 \end{proof}

 %--------------------------------------------------------------------------------------

 \section{The Schur $\Lie$-multiplier and the precise $\Lie$-center} \label{capability}
 The precise $\Lie$-center  was introduced in \cite{CKh} in order to characterize $\Lie$-capability of Leibniz algebras. Our aim in this section is to analyze its connections with the Schur $\Lie$-multiplier.

\begin{De} \cite[Definition 4]{CKh}
The precise $\Lie$-center $Z_{\Lie}^{\ast}(\Lieq)$ of a Leibniz algebra $\Lieq$ is the intersection of all two-sided ideals $f(Z_{\Lie}(\Lieg))$, where $f : \Lieg \twoheadrightarrow \Lieq$ is a $\Lie$-central extension.
\end{De}

\begin{Th}
Let $\Lieg$ be a Leibniz algebra with finite-dimensional Schur $\Lie$-multiplier and let $0 \to \Liem \to \Lieg^{\ast} \stackrel{\psi}\to \Lieg \to 0$ be a $\Lie$-stem cover. Then $Z_{\Lie}^{\ast}(\Lieg) = \psi(Z_{\Lie}(\Lieg^{\ast}))$.
\end{Th}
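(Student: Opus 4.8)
The plan is to prove the two inclusions separately. The inclusion $Z_{\Lie}^{\ast}(\Lieg) \subseteq \psi(Z_{\Lie}(\Lieg^{\ast}))$ is immediate: a $\Lie$-stem cover is in particular a $\Lie$-central extension, and $Z_{\Lie}^{\ast}(\Lieg)$ is by definition the intersection of the two-sided ideals $f(Z_{\Lie}(\Lieh))$ taken over all $\Lie$-central extensions $f:\Lieh\twoheadrightarrow\Lieg$, one of which is $\psi$ itself. So the whole work is in showing $\psi(Z_{\Lie}(\Lieg^{\ast}))\subseteq f(Z_{\Lie}(\Lieh))$ for an arbitrary $\Lie$-central extension $f:\Lieh\twoheadrightarrow\Lieg$.

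Fix a free presentation $0 \to \Lier \to \Lief \stackrel{\rho}\to \Lieg \to 0$. By Theorem \ref{Lie stem cover} there is a two-sided ideal $\Lies$ of $\Lief$ with $\Lieg^{\ast}\cong\Lief/\Lies$, $\Liem\cong\Lier/\Lies$ (so $[\Lief,\Lier]_{\Lie}\subseteq\Lies\subseteq\Lier$) and $\Lier/[\Lief,\Lier]_{\Lie}\cong{\cal M}^{\Lie}(\Lieg)\oplus\Lies/[\Lief,\Lier]_{\Lie}$. Since ${\cal M}^{\Lie}(\Lieg)$ is the image of $\Lier\cap[\Lief,\Lief]_{\Lie}$ in $\Lier/[\Lief,\Lier]_{\Lie}$ and $\Lies\cap[\Lief,\Lief]_{\Lie}\subseteq\Lier\cap[\Lief,\Lief]_{\Lie}$, this direct sum forces $\Lies\cap[\Lief,\Lief]_{\Lie}\subseteq[\Lief,\Lier]_{\Lie}$. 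Let $\pi^{\ast}:\Lief/[\Lief,\Lier]_{\Lie}\twoheadrightarrow\Lief/\Lies\cong\Lieg^{\ast}$ be the canonical projection, so that $\psi\circ\pi^{\ast}=\overline{\rho}$, the map induced by $\rho$. I claim every $z^{\ast}\in Z_{\Lie}(\Lieg^{\ast})$ is of the form $\pi^{\ast}(\bar u)$ with $\bar u\in Z_{\Lie}(\Lief/[\Lief,\Lier]_{\Lie})$: choosing any $\bar u$ with $\pi^{\ast}(\bar u)=z^{\ast}$, for every $\bar v$ the element $[\bar u,\bar v]+[\bar v,\bar u]$ lies in $\Ker(\pi^{\ast})=\Lies/[\Lief,\Lier]_{\Lie}$ and also in $[\Lief,\Lief]_{\Lie}/[\Lief,\Lier]_{\Lie}$, hence in $(\Lies\cap[\Lief,\Lief]_{\Lie})/[\Lief,\Lier]_{\Lie}=0$; thus $\bar u$ is $\Lie$-central. (This is precisely the computation carried out in the proof of the last Proposition of Section \ref{stem cover}.)

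Now let $f:\Lieh\twoheadrightarrow\Lieg$ be an arbitrary $\Lie$-central extension with $\Lien=\Ker(f)$. Applying Lemma \ref{lema 1} to this extension and $\alpha=\mathrm{id}_{\Lieg}$ yields a homomorphism $\beta:\Lief/[\Lief,\Lier]_{\Lie}\to\Lieh$ with $f\circ\beta=\overline{\rho}$ and $\beta(\Lier/[\Lief,\Lier]_{\Lie})\subseteq\Lien$; moreover $\Lieh=\im(\beta)+\Lien$ because $f\circ\beta=\overline{\rho}$ is surjective and $\Lien=\Ker(f)$. The decisive observation is that $\beta$ carries $\Lie$-central elements to $\Lie$-central elements here: if $\bar u\in Z_{\Lie}(\Lief/[\Lief,\Lier]_{\Lie})$, then applying $\beta$ to $[\bar u,\bar v]+[\bar v,\bar u]=0$ shows $\beta(\bar u)$ $\Lie$-commutes with all of $\im(\beta)$, and it $\Lie$-commutes with all of $\Lien$ since $\Lien\subseteq Z_{\Lie}(\Lieh)$; hence $\beta(\bar u)$ $\Lie$-commutes with $\im(\beta)+\Lien=\Lieh$, i.e. $\beta(\bar u)\in Z_{\Lie}(\Lieh)$. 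Combining this with the previous paragraph,
\[
\psi\big(Z_{\Lie}(\Lieg^{\ast})\big)=\psi\pi^{\ast}\big(Z_{\Lie}(\Lief/[\Lief,\Lier]_{\Lie})\big)=\overline{\rho}\big(Z_{\Lie}(\Lief/[\Lief,\Lier]_{\Lie})\big)=f\beta\big(Z_{\Lie}(\Lief/[\Lief,\Lier]_{\Lie})\big)\subseteq f\big(Z_{\Lie}(\Lieh)\big).
\]
As $f$ ranges over all $\Lie$-central extensions of $\Lieg$, intersecting gives $\psi(Z_{\Lie}(\Lieg^{\ast}))\subseteq Z_{\Lie}^{\ast}(\Lieg)$, which together with the first inclusion yields equality.

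I expect the only genuinely delicate point to be the identification of $Z_{\Lie}(\Lieg^{\ast})$ with $\pi^{\ast}\big(Z_{\Lie}(\Lief/[\Lief,\Lier]_{\Lie})\big)$, since $\pi^{\ast}$ is not injective and one must control its kernel $\Lies/[\Lief,\Lier]_{\Lie}$; this is exactly where the finite-dimensionality of ${\cal M}^{\Lie}(\Lieg)$ enters, via the direct-sum splitting in Theorem \ref{Lie stem cover}(b) and the resulting inclusion $\Lies\cap[\Lief,\Lief]_{\Lie}\subseteq[\Lief,\Lier]_{\Lie}$. Everything else is diagram chasing on top of Lemma \ref{lema 1}.
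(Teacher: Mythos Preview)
Your proof is correct and follows essentially the same line as the paper's. Both hinge on the identification $\psi\big(Z_{\Lie}(\Lieg^{\ast})\big)=\overline{\rho}\big(Z_{\Lie}(\Lief/[\Lief,\Lier]_{\Lie})\big)$, obtained from the direct-sum decomposition of Theorem~\ref{Lie stem cover}(b) via the key inclusion $\Lies\cap[\Lief,\Lief]_{\Lie}\subseteq[\Lief,\Lier]_{\Lie}$; the only difference is that the paper then invokes \cite[Lemma~2]{CKh}, which already asserts $Z_{\Lie}^{\ast}(\Lieg)=\overline{\rho}\big(Z_{\Lie}(\Lief/[\Lief,\Lier]_{\Lie})\big)$, whereas you split into two inclusions and, for the nontrivial one, essentially reprove that lemma by hand using Lemma~\ref{lema 1} and the decomposition $\Lieh=\im(\beta)+\Lien$. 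Your version is thus slightly more self-contained, the paper's slightly shorter; the mathematical content is the same.
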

\begin{proof}
Let $0 \to \Lier \to \Lief \stackrel{\rho} \to \Lieg \to 0$ be a free presentation of $\Lieg$. By Theorem \ref{Lie stem cover}  there exists a two-sided ideal $\Lies$ of $\Lier$ such that $\Lieg^{\ast} \cong \frac{\Lief}{\Lies}, \Liem \cong \frac{\Lier}{\Lies}$ and  $\frac{\Lier}{[\Lief, \Lier]_{\Lie}} \cong {\cal M}^{\Lie}(\Lieg) \oplus \frac{\Lies}{[\Lief, \Lier]_{\Lie}}$.

Put $Z_{\Lie}(\Lieg^{\ast}) = Z_{\Lie}(\frac{\Lief}{\Lies}) \cong \frac{\Liet}{\Lies}$ for some two-sided ideal $\Liet$ of $\Lief$, then $[\Lief, \Liet]_{\Lie} \subseteq \Lies \cap [\Lief, \Lief]_{\Lie} = [\Lief, \Lier]_{\Lie}$, and hence $\frac{\Liet}{[\Lief, \Lier]_{\Lie}} \subseteq Z_{\Lie} \left( \frac{\Lief}{[\Lief, \Lier]_{\Lie}} \right)$.

On the other hand, if $z + [\Lief, \Lier]_{\Lie} \in  Z_{\Lie} \left( \frac{\Lief}{[\Lief, \Lier]_{\Lie}} \right)$, then for any $f \in \Lief$ we have that $[z,f]+[f,z] \in [\Lief, \Lier]_{\Lie} \subseteq \Lies$, so $z + \Lies \in Z_{\Lie} \left( \frac{\Lief}{\Lies} \right) = \frac{\Liet}{\Lies}$. Consequently, $Z_{\Lie} \left( \frac{\Lief}{[\Lief, \Lier]_{\Lie}} \right) \subseteq \frac{\Liet}{[\Lief, \Lier]_{\Lie}}$. This fact, together with the above one, actually provides an equality. Thus, thanks to \cite[Lemma 2]{CKh},  we have
$$Z_{\Lie}(\Lieg^{\ast}) = \overline{\rho} \left( Z_{\Lie} \left( \frac{\Lief}{[\Lief, \Lier]_{\Lie}} \right) \right) = \overline{\rho} \left( \frac{\Liet}{[\Lief, \Lier]_{\Lie}}  \right) = \rho(\Liet) = \psi \left( \frac{\Liet}{\Lies} \right) = \psi \left( Z_{\Lie}(\Lieg^{\ast}) \right)$$
\end{proof}

\begin{Th} \label{equivalences}
Let $\Lien$ be a $\Lie$-central two-sided ideal of a Leibniz algebra $\Lieg$. The following statements are equivalent:
\begin{enumerate}
\item[(a)] $\Lien \cap [\Lieg, \Lieg]_{\Lie} \cong {\cal M}^{\Lie}(\frac{\Lieg}{\Lien}) / {\cal M}^{\Lie}(\Lieg)$.
\item[(b)] $\Lien \subseteq Z_{\Lie}^{\ast}(\Lieg)$.
\item[(c)] The natural map $\sigma: {\cal M}^{\Lie}(\Lieg) \to {\cal M}^{\Lie}(\frac{\Lieg}{\Lien})$ is a monomorphism.
\end{enumerate}
\end{Th}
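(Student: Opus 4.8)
The natural strategy is to establish the cycle of implications $(c) \To (a) \To (b) \To (c)$, exploiting the four-term exact sequence (\ref{exact seq}) from Theorem \ref{sequence} together with the identification of the tail $\frac{\Lien \cap [\Lieg, \Lieg]_{\Lie}}{[\Lieg, \Lien]_{\Lie}}$ with $\Lien \cap [\Lieg, \Lieg]_{\Lie}$ that is available because $\Lien$ is $\Lie$-central, i.e. $[\Lieg, \Lien]_{\Lie} = 0$. With this simplification the sequence reads
\[
0 \to \ker(\sigma)' \to {\cal M}^{\Lie}(\Lieg) \stackrel{\sigma}\to {\cal M}^{\Lie}\!\left(\tfrac{\Lieg}{\Lien}\right) \stackrel{\tau}\to \Lien \cap [\Lieg, \Lieg]_{\Lie} \to 0,
\]
where the leftmost term is the one appearing in (\ref{exact seq}). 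Thus $\sigma$ is a monomorphism precisely when that leftmost term vanishes, in which case $\Lien \cap [\Lieg, \Lieg]_{\Lie} \cong \operatorname{coker}(\sigma) = {\cal M}^{\Lie}(\frac{\Lieg}{\Lien}) / {\cal M}^{\Lie}(\Lieg)$; this gives $(c) \To (a)$ directly, and the converse $(a) \To (c)$ follows by a dimension/exactness count (at least in the finite-dimensional case; in general one argues that the composite of $\sigma$ with the quotient onto $\operatorname{coker}(\sigma) \cong \Lien \cap [\Lieg,\Lieg]_\Lie$ being an isomorphism onto a complement forces $\ker(\sigma) = 0$). So $(a) \Leftrightarrow (c)$ is essentially a formal consequence of (\ref{exact seq}).

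The substantive implication is $(b) \To (c)$: if $\Lien \subseteq Z_{\Lie}^{\ast}(\Lieg)$ then $\sigma$ is injective. Here I would use the characterization of the precise $\Lie$-center as the intersection of the images $f(Z_{\Lie}(\Lieh))$ over all $\Lie$-central extensions $f : \Lieh \twoheadrightarrow \Lieg$. Pick a free presentation $0 \to \Lier \to \Lief \stackrel{\rho}\to \Lieg \to 0$ and a two-sided ideal $\Lies$ of $\Lief$ with $\Lien \cong (\Lies + \Lier)/\Lier$ and $\Lies \subseteq \Lier$. The kernel of $\sigma$ is, by the proof of Theorem \ref{sequence}, $\frac{\Lier \cap [\Lief, \Lies]_{\Lie}}{[\Lief, \Lier]_{\Lie} \cap [\Lief, \Lies]_{\Lie}}$; to kill it I want to choose the presentation so that $\Lies$ can be taken inside a complement of ${\cal M}^{\Lie}(\Lieg)$ in $\Lier/[\Lief,\Lier]_\Lie$, which is exactly the situation of a $\Lie$-stem cover (Theorem \ref{Lie stem cover}). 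The point is that $\Lien \subseteq Z_{\Lie}^{\ast}(\Lieg)$ means $\Lien$ lies in the image of the $\Lie$-center under \emph{every} $\Lie$-central extension, in particular under a $\Lie$-stem cover $\Lieg^{\ast} \twoheadrightarrow \Lieg$; lifting $\Lien$ into $Z_{\Lie}(\Lieg^{\ast})$ and using $Z_{\Lie}(\Lieg^\ast) \cap [\Lieg^\ast,\Lieg^\ast]_\Lie$-type relations one arranges $[\Lief,\Lies]_\Lie \subseteq [\Lief,\Lier]_\Lie$, whence $\ker(\sigma) = 0$.

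For $(a) \To (b)$ I would argue contrapositively, or more naturally prove $(c) \To (b)$ directly and let the cycle close: if $\sigma$ is injective, I must exhibit, for an arbitrary $\Lie$-central extension $f : \Lieh \twoheadrightarrow \Lieg$ with kernel $\Liek$, that the preimage of $\Lien$ meets $Z_{\Lie}(\Lieh)$ in a set mapping onto $\Lien$; equivalently that $\Lien$ lifts to a $\Lie$-central ideal. One reduces to a free presentation via Lemma \ref{lema 1}, pulls $\Lien$ back to an ideal $\Lies+\Lier$ of $\Lief$, and uses injectivity of $\sigma$ (i.e. $\Lier \cap [\Lief,\Lies]_\Lie \subseteq [\Lief,\Lier]_\Lie$) to show $[\Lief, \Lies+\Lier]_\Lie \subseteq [\Lief,\Lier]_\Lie$, so that the image of $\Lies/[\Lief,\Lier]_\Lie$ in $\Lieh$ is central and maps onto $\Lien$ under the comparison map $\Lief/[\Lief,\Lier]_\Lie \to \Lieh$ supplied by Lemma \ref{lema 1}; this holds for every such $\Lieh$, giving $\Lien \subseteq Z_{\Lie}^{\ast}(\Lieg)$. \textbf{The main obstacle} I anticipate is the bookkeeping in $(b) \To (c)$: translating the abstract containment $\Lien \subseteq Z_{\Lie}^{\ast}(\Lieg)$ — a statement about \emph{all} $\Lie$-central extensions — into the concrete ideal-theoretic identity $\Lier \cap [\Lief, \Lies]_{\Lie} \subseteq [\Lief, \Lier]_{\Lie}$ at the level of a single chosen free presentation, and making sure the choice of $\Lies$ compatible with $\Lien$ can be made simultaneously compatible with a stem-cover decomposition of $\Lier/[\Lief,\Lier]_\Lie$.
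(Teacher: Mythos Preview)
Your equivalence $(a)\Leftrightarrow(c)$ via the four-term sequence matches the paper's one-line argument. For $(b)\Leftrightarrow(c)$, however, the paper takes a much shorter route that dissolves precisely the obstacle you flag. It invokes \cite[Lemma 2]{CKh}, which already states that $Z_{\Lie}^{\ast}(\Lieg)=\overline{\rho}\bigl(Z_{\Lie}(\Lief/[\Lief,\Lier]_{\Lie})\bigr)$ for \emph{any} free presentation. With $\Lies=\rho^{-1}(\Lien)\supseteq\Lier$ (your ``$\Lies\subseteq\Lier$'' is a slip) and $\overline{\Lies}=\Lies/[\Lief,\Lier]_{\Lie}$, one has the chain of equivalences $\Lien\subseteq Z_{\Lie}^{\ast}(\Lieg)\iff \overline{\Lies}\subseteq Z_{\Lie}(\overline{\Lief})\iff [\Lief,\Lies]_{\Lie}\subseteq[\Lief,\Lier]_{\Lie}\iff \ker(\sigma)=0$, using that $\Lien$ is $\Lie$-central so $[\Lief,\Lies]_{\Lie}\subseteq\Lier$ and $\ker(\sigma)\cong[\Lief,\Lies]_{\Lie}/[\Lief,\Lier]_{\Lie}$. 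No stem covers, no case analysis over all $\Lie$-central extensions.

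By contrast, your $(b)\Rightarrow(c)$ via a $\Lie$-stem cover imports an extraneous hypothesis: stem covers are only produced in this paper when ${\cal M}^{\Lie}(\Lieg)$ is finite-dimensional (Theorem \ref{Lie stem cover}, Corollary \ref{one stem cover}), whereas the theorem has no such assumption. Your $(c)\Rightarrow(b)$, lifting $\Lien$ through Lemma \ref{lema 1} into an arbitrary $\Lie$-central extension and checking the lift lands in the $\Lie$-center (using that $\Lieh=\im(\beta)+\Liek$ with $\Liek$ $\Lie$-central), is correct and is essentially a by-hand reproof of \cite[Lemma 2]{CKh}. So your strategy would ultimately work once the stem-cover detour is replaced by the direct observation that the single extension $\Lief/[\Lief,\Lier]_{\Lie}\twoheadrightarrow\Lieg$ already realizes $Z_{\Lie}^{\ast}(\Lieg)$; the paper simply cites that fact.
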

\begin{proof}
Statements {\it (a)} and {\it (c)} are equivalent thanks to exact sequence (\ref{exact seq}).

 {\it (b)} $\Leftrightarrow$ {\it (c)} Consider a diagram of free presentations similar to diagram (\ref{free present diagr}), from which immediately follows that $[\Lief, \Lies]_{\Lie} \subseteq \Lier$. Since $\ker(\sigma) \cong \frac{[\Lief, \Lies]_{\Lie}}{[\Lief, \Lier]_{\Lie}}$, then to prove the equivalence between {\it (b)} and {\it (c)} is enough to show that $[\Lief, \Lies]_{\Lie} = [\Lief, \Lier]_{\Lie}$ is equivalent to $\Lien \subseteq Z_{\Lie}^{\ast}(\Lieg)$.

 Set $\overline{\Lief} = \frac{\Lief}{[\Lief, \Lier]_{\Lie}}, \overline{\Lier} = \frac{\Lier}{[\Lief, \Lier]_{\Lie}}$ and $\overline{\Lies} = \frac{\Lies}{[\Lief, \Lier]_{\Lie}}$, then $[\Lief, \Lies]_{\Lie} = [\Lief, \Lier]_{\Lie}$ is equivalent to $\overline{\Lies} \subseteq Z_{\Lie}(\overline{\Lief})$. But, by Lemma 2 in \cite{CKh}, $Z_{\Lie}^{\ast}(\Lieg) = \overline{\rho} \left( Z_{\Lie}(\overline{\Lief}) \right)$. In consequence, $\overline{\rho}(\overline{\Lies}) \subseteq Z_{\Lie}^{\ast}(\Lieg)$ if and only if $\overline{\Lies} \subseteq Z_{\Lie}(\overline{\Lief})$. Then the result follows since $\overline{\rho}(\overline{\Lies}) = \Lien$.
\end{proof}

\begin{Co}
$Z_{\Lie}^{\ast}(\Lieg) = 0$ (i.e. $\Lieg$ is $\Lie$-capable, see \cite[Corollary 2]{CKh}) if and only if the natural map $\sigma_x: {\cal M}^{\Lie}(\Lieg) \to {\cal M}^{\Lie}(\frac{\Lieg}{\langle x \rangle})$ has non-trivial kernel for all non-zero elements $x \in Z_{\Lie}(\Lieg)$.
\end{Co}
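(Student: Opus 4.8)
The plan is to obtain this as a direct consequence of the equivalence $(b)\Leftrightarrow(c)$ of Theorem~\ref{equivalences}, specialised to the ideals $\langle x\rangle$ generated by single elements of the $\Lie$-center. The set-up is as follows: for a non-zero $x\in Z_{\Lie}(\Lieg)$, let $\langle x\rangle$ be the two-sided ideal of $\Lieg$ generated by $x$. Since $Z_{\Lie}(\Lieg)=C_{\Lieg}^{\Lie}(\Lieg,0)$ is itself a two-sided ideal of $\Lieg$ (Lemma~\ref{Lemma3.3}) and contains $x$, we have $\langle x\rangle\subseteq Z_{\Lie}(\Lieg)$; in particular $\langle x\rangle$ is a $\Lie$-central two-sided ideal, so Theorem~\ref{equivalences} applies with $\Lien=\langle x\rangle$ and yields that $\sigma_x$ is a monomorphism (equivalently $\ker(\sigma_x)=0$) if and only if $\langle x\rangle\subseteq Z_{\Lie}^{\ast}(\Lieg)$.

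From here both implications are immediate once one notes that $Z_{\Lie}^{\ast}(\Lieg)$ is a two-sided ideal of $\Lieg$ with $Z_{\Lie}^{\ast}(\Lieg)\subseteq Z_{\Lie}(\Lieg)$: it is an intersection of the two-sided ideals $f(Z_{\Lie}(\Liep))$ taken over all $\Lie$-central extensions $f:\Liep\twoheadrightarrow\Lieg$, hence a two-sided ideal, and it is contained in $Z_{\Lie}(\Lieg)$ because $\mathrm{id}_{\Lieg}$ is one such extension with $\mathrm{id}_{\Lieg}(Z_{\Lie}(\Lieg))=Z_{\Lie}(\Lieg)$. For the forward direction, if $Z_{\Lie}^{\ast}(\Lieg)=0$ and $x\in Z_{\Lie}(\Lieg)$ is non-zero, then $\langle x\rangle\ne 0$, hence $\langle x\rangle\not\subseteq Z_{\Lie}^{\ast}(\Lieg)$, and the equivalence above forces $\ker(\sigma_x)\ne 0$. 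For the converse I would argue by contraposition: if $Z_{\Lie}^{\ast}(\Lieg)\ne 0$, pick a non-zero $x\in Z_{\Lie}^{\ast}(\Lieg)\subseteq Z_{\Lie}(\Lieg)$; then $\langle x\rangle\subseteq Z_{\Lie}^{\ast}(\Lieg)$ because $Z_{\Lie}^{\ast}(\Lieg)$ is a two-sided ideal, so the equivalence gives $\ker(\sigma_x)=0$ for this particular non-zero element of $Z_{\Lie}(\Lieg)$, contradicting the hypothesis.

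There is no genuine obstacle here: the whole statement is a matter of specialising Theorem~\ref{equivalences} to the principal ideals $\langle x\rangle$ and keeping track of the quantifier over $x$. The two small points that must be recorded explicitly — that $\langle x\rangle$ is $\Lie$-central whenever $x\in Z_{\Lie}(\Lieg)$, and that $Z_{\Lie}^{\ast}(\Lieg)$ is a two-sided ideal contained in $Z_{\Lie}(\Lieg)$ — are exactly what licences passing between the inclusion $\Lien\subseteq Z_{\Lie}^{\ast}(\Lieg)$ of Theorem~\ref{equivalences} and a condition tested on single elements; both follow at once from Lemma~\ref{Lemma3.3} and the definition of the precise $\Lie$-center.
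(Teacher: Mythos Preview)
Your proposal is correct and follows essentially the same route as the paper: both arguments specialise the equivalence $(b)\Leftrightarrow(c)$ of Theorem~\ref{equivalences} to the principal ideals $\langle x\rangle$ and then handle the two implications (the paper argues each by contraposition, as you do for the converse). Your version is in fact more careful than the paper's, since you explicitly verify the two auxiliary facts---that $\langle x\rangle$ is $\Lie$-central for $x\in Z_{\Lie}(\Lieg)$, and that $Z_{\Lie}^{\ast}(\Lieg)$ is a two-sided ideal contained in $Z_{\Lie}(\Lieg)$---which the paper uses tacitly.
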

\begin{proof}
Assume that $\ker(\sigma_x)$ is trivial for any non-zero element $x \in Z_{\Lie}(\Lieg)$. By theorem \ref{equivalences} $\langle x \rangle \subseteq Z_{\Lie}^{\ast}(\Lieg)$, so $Z_{\Lie}^{\ast}(\Lieg) \neq 0$.

For every non-zero element $x \in Z_{\Lie}(\Lieg)$, we have $0 \neq \langle x \rangle \nsubseteqq Z_{\Lie}^{\ast}(\Lieg) = 0$, then $\sigma_x$ cannot be a monomorphism.
\end{proof}

%---------------------------------------------------------------------------------------

\section*{\bf Acknowledgements}
Authors were supported by Ministerio de Economía y Competitividad (Spain), grant MTM2016-79661-P (AEI/FEDER, UE, support included).

%---------------------------------------------------------------------------------------

\begin{center}

\end{center}

\end{document}